\documentclass[a4paper,10pt,reqno]{amsart} 

\usepackage[english]{babel}
\usepackage[utf8]{inputenc}

\usepackage{amssymb}
\usepackage{amsmath}
\usepackage{amsthm}
\usepackage{bbm}
\usepackage{mathtools}

\usepackage{enumitem}
\usepackage{tikz}
\usetikzlibrary{matrix}
\usetikzlibrary{hobby}
\usepackage{marginnote}
\usepackage{hyperref}

\newcommand{\bbC}{\mathbb{C}}

\newcommand{\bbN}{\mathbb{N}}

\newcommand{\bbR}{\mathbb{R}}

\newcommand{\calB}{\mathcal{B}}

\newcommand{\calH}{\mathcal{H}}

\newcommand{\suchthat}{\,\middle|\,} 
\newcommand{\suchthatManual}{\, | \,}
\DeclareMathOperator{\id}{id} 
\DeclareMathOperator{\one}{\mathbbm{1}} 
\DeclareMathOperator{\re}{Re} 
\newcommand{\ui}{\mathrm{i}} 
\newcommand{\argument}{\mathord{\,\cdot\,}} 
\newcommand{\dx}{\;\mathrm{d}} 
\newcommand{\dxInt}{\;\mathrm{d}} 
\newcommand{\norm}[1]{\left\lVert #1 \right\rVert} 
\newcommand{\inner}[2]{\left( #1 \middle| #2 \right)} 
\newcommand{\modulus}[1]{\left\lvert #1 \right\rvert} 
\DeclareMathOperator{\dom}{dom} 

\newcommand{\numRange}{\operatorname{W}} 
\newcommand{\spec}{\sigma} 
\newcommand{\specPnt}{\spec_{\operatorname{pnt}}} 
\newcommand{\spb}{s} 

\newcommand{\impliesProof}[2]{``\ref{#1} $\Rightarrow$ \ref{#2}''}
\newcommand{\equivalentProof}[2]{``\ref{#1} $\Leftrightarrow$ \ref{#2}''}

\newcommand{\ext}{{\operatorname{ext}}}
\newcommand{\bdd}{{\operatorname{B}}}


\theoremstyle{definition}
\newtheorem{definition}{Definition}[section]
\newtheorem{remark}[definition]{Remark}

\newtheorem{example}[definition]{Example}

\newtheorem{splitting_assumption}[definition]{Splitting Assumption}

\theoremstyle{plain}
\newtheorem{proposition}[definition]{Proposition}
\newtheorem{lemma}[definition]{Lemma}
\newtheorem{theorem}[definition]{Theorem}
\newtheorem{corollary}[definition]{Corollary}

\numberwithin{equation}{section}

\DeclarePairedDelimiter{\set}{\{}{\}}
\renewcommand{\div}{\operatorname{div}}
\newcommand{\grad}{\operatorname{grad}}

\begin{document}

%
\title[Stability via closure relations]{Stability via closure relations with applications to dissipative and port-Hamiltonian systems}
\author[J. Glück]{Jochen Glück}
\address[J. Glück]{School of Mathematics and Natural Sciences, IMACM, University of Wuppertal, Gaußstraße 20, 42119 Wuppertal, Germany}
\email{glueck@uni-wuppertal.de}
\author[B. Jacob]{Birgit Jacob}
\address[B. Jacob]{School of Mathematics and Natural Sciences, IMACM, University of Wuppertal, Gaußstraße 20, 42119 Wuppertal, Germany}
\email{bjacob@uni-wuppertal.de}
\author[A. Meyer]{Annika Meyer}
\address[A. Meyer]{School of Mathematics and Natural Sciences, IMACM, University of Wuppertal, Gaußstraße 20, 42119 Wuppertal, Germany}
\email{anmeyer@uni-wuppertal.de}
\author[C. Wyss]{Christian Wyss}
\address[C. Wyss]{School of Mathematics and Natural Sciences, IMACM, University of Wuppertal, Gaußstraße 20, 42119 Wuppertal, Germany}
\email{wyss@math.uni-wuppertal.de}
\author[H. Zwart]{Hans Zwart}
\address[H. Zwart]{Department of Applied Mathematics, University of Twente, P.O.\ Box 217, 7500 Enschede, and Dynamics   and   Control   group,   Eindhoven  University  of  Technology,  5612  AZ, Eindhoven, The Netherlands.}
\email{h.j.zwart@utwente.nl}
\subjclass[2020]{93D23, 37K40, 47D06, 34G10}
\keywords{Port-Hamiltonian systems, closure relations, exponential stability, $C_0$-semigroups}
\date{\today}
\begin{abstract}
    We consider differential operators $A$ that can be represented by means of a so-called closure relation in terms of a simpler operator $A_{\operatorname{ext}}$ defined on a larger space.
    We analyze how the spectral properties of $A$ and $A_{\operatorname{ext}}$ are related and give sufficient conditions for  exponential stability of the semigroup generated by $A$ in terms of the semigroup generated by $A_{\operatorname{ext}}$.

    As applications we study the long-term behaviour of a coupled wave-heat system on an interval, parabolic equations on bounded domains that are coupled by matrix valued potentials, and of linear infinite-dimensional port-Hamiltonian systems with dissipation on an interval.
\end{abstract}

\maketitle

\section{Introduction} 
\label{section:introduction}

In the study of linear autonomous evolution equations $\dot z(t) = A_S z(t)$ -- where $A_S$ is for instance a differential operator on a Hilbert or Banach space -- one is, beyond well-posedness, often interested in understanding the long-term behaviour of the solutions and, as a step in this direction, the spectral properties of the operator $A_S$.
In some cases the operator $A_S$ can be analyzed by using so-called \emph{closure relations}: one considers an operator $A_\ext$ that is simpler but defined on a larger space, relates this operator to $A_S$, and then uses properties of $A_\ext$ to derive desired properties of $A_S$. 
In \cite{SchwenningerZwart2014,ZwGoMa16} it was shown that this approach can, in particular, be used to show well-posedness of $\dot z(t) = A_S z(t)$ by showing that $A_S$ generates a contractive $C_0$-semigroup under appropriate assumptions.
In this article we use the same approach, but with the goal to study spectral properties of $A_S$ and the long-term behaviour of the solutions to $\dot z(t) = A_S z(t)$.

Our setting is, more concretely, as follows: we consider the block operator 
\begin{align*}
    A_\ext 
    = 
    \begin{pmatrix}
        A_{11} & A_{12} \\ 
        A_{21} & 0
    \end{pmatrix}
\end{align*}
on the product $H_1 \times H_2$ of two Hilbert spaces. 
Given $A_\ext$ and a bounded operator $S$ on $H_2$ we construct the operator $A_S \coloneqq A_{11} + A_{12} S A_{21}$ on $H_1$ and derive spectral properties of $A_S$ and asymptotic properties of the $C_0$-semigroup generated by $A_S$ from related properties of $A_\ext$. 
The construction of $A_S$ can be interpreted as follows: 
if we consider the equation 
\begin{align*}
    \begin{pmatrix}
        g_1 \\ 
        g_2
    \end{pmatrix}
    = 
    \begin{pmatrix}
        A_{11} & A_{12} \\ 
        A_{21} & 0
    \end{pmatrix}
    \begin{pmatrix}
        h_1 \\ h_2
    \end{pmatrix}
\end{align*}
and add the \emph{closure relation} $h_2 = S g_2$, we end up with the equation $g_1 = (A_{11} + A_{21} S A_{21}) h_1$ for $g_1$ and $h_1$, where the operator $A_S$ naturally occurs. 

As a simple but illuminating example let us roughly outline how one can obtain the Laplace operator on $L^2(\bbR^d)$ by this type of construction: one takes $H_1 := L^2(\bbR^d; \bbC)$, $H_2 := L^2(\bbR^d; \bbC^d)$ and
\begin{align*}
    A_\ext 
    = 
    \begin{pmatrix}
        0 &    -\operatorname{div} \\ 
        \grad &  0
    \end{pmatrix}
\end{align*}
on an appropriate domain. 
For $S = \id$ one then has $A_S = \Delta$. 
For more general $S$ one can construct divergence form elliptic operators in this way.
More involved examples are discussed in Sections~\ref{sec:wave-heat}--\ref{sec:phs}.

The detailed setting reagrding the operators $A_\ext$ and $A_S$ is discussed in Section~\ref{section:setting} and our theoretical main results about the spectrum and the long-term behavior are given in Section~\ref{sec:inheritance-of-stability}.
Afterwards we discuss three different applications of our results: 
in Section~\ref{sec:wave-heat} we analyze a wave and a heat equation on an interval that are coupled by boundary conditions. While the model itself is rather simple, it nicely demonstrates how our approach can be used to study equations that exhibit both hyperbolic and parabolic behavior on different parts of the spatial domain.
In Section~\ref{sec:coupled-parabolic-equations} we show how our results can be used to analyze a system of parabolic equations on a bounded domain which are coupled by a matrix-valued potential. 
As concrete examples, we study coupled heat equations and coupled biharmonic equations.
In the final Section~\ref{sec:phs} we apply our theory to port-Hamiltonian systems on an interval which are subject to dissipation.

For the applications to the wave-heat system (Section~\ref{sec:wave-heat}) and port-Hamiltonian systems (Section~\ref{sec:phs}) it is important that the domain block operator $A_\ext$ described above is allowed to have a somewhat subtle property:
instead of considering separate operators $A_{11}$ and $A_{12}$ in the first line of $A_\ext$ we work with a single operator $A_1$ from $H_1 \times H_2$ to $H_1$ which has a domain that cannot, in general, be split into two subspaces of $H_1$ and $H_2$ -- compare however Subsection~\ref{subsection:the-operators} for the case where such a splitting is possible.

\section{Operator theoretic setting} 
\label{section:setting}

In this section we describe the general theoretical setting for closure relations that we will use throughout the rest of the article.
We use the convention that the inner product in complex Hilbert spaces is linear in the first component and antilinear in the second.

\subsection{The operators $A_\ext$ and $A_S$}
\label{subsection:the-operators}

Throughout Sections~\ref{section:setting} and~\ref{sec:inheritance-of-stability} let $H_1, H_2$ be complex non-zero Hilbert spaces, let
\begin{align*}
    A_1: \dom(A_1) \subseteq H_1 \times H_2 \to H_1
    \quad \text{and} \quad 
    A_{21}: \dom(A_{21}) \subseteq H_1 \to H_2
\end{align*}
be linear operators, and let $S: H_2 \to H_2$ be a bounded linear operator which is coercive, meaning that there exists a number $\nu > 0$ such that
\begin{align*}
    \re \inner{h_2}{S h_2}_{H_2} \ge \nu \norm{h_2}_{H_2}^2
    \qquad \text{for all } h_2 \in H_2.
\end{align*}
So in particular, $S$ is bijective and the numerical range of $S$ (and thus also of $S^{-1}$) is contained in the right half plane of $\bbC$.

From these objects we construct an \emph{extended operator} $A_\ext$ on $H_1 \times H_2$ and an operator $A_S$ on $H_1$ that can be obtained from $A_\ext$ by adding a so-called \emph{closure relation}:
\begin{definition}[$A_\ext$ and $A_S$] 
    \label{def:operators}
    We define the following operators:
    \begin{enumerate}[label=(\alph*)]
        \item\label{def:operators:itm:A_ext}
        Let $A_\ext: \dom(A_\ext) \subseteq H_1 \times H_2 \to H_1 \times H_2$ be given by
        \begin{align*}
            \dom(A_\ext) 
            & := 
            \left\{ 
                \begin{pmatrix}
                    h_1 \\ 
                    h_2
                \end{pmatrix} 
                \in \dom(A_1) 
                \suchthat 
                h_1 \in \dom(A_{21})
            \right\},
            \\ 
            A_\ext 
            & := 
            \begin{pmatrix}
                \, A_1            \\ 
                \begin{array}{cc}
                    A_{21} & 0 \\
                \end{array}
            \end{pmatrix}    .
        \end{align*}
        \item\label{def:operators:itm:A_S} 
        Let $A_S: \dom(A_S) \subseteq H_1 \to H_1$ be the operator that has domain
        \begin{align*}
            \dom(A_S) 
            & := 
            \left\{
                h_1 \in \dom(A_{21}) 
                \suchthat 
                \begin{pmatrix}
                    h_1          \\ 
                    S A_{21} h_1
                \end{pmatrix}
                \in \dom(A_1)
            \right\}
            \\ 
            & = 
            \left\{
                h_1 \in \dom(A_{21}) 
                \suchthat 
                \begin{pmatrix}
                    h_1          \\ 
                    S A_{21} h_1
                \end{pmatrix}
                \in \dom(A_\ext)
            \right\}
        \end{align*}
        and is given by
        \begin{align*}
            A_S h_1 
            := 
            A_1 
            \begin{pmatrix}
                h_1          \\
                S A_{21} h_1
            \end{pmatrix}
        \end{align*}
        for all $h_1 \in \dom(A_S)$.
    \end{enumerate}
\end{definition}

The subsequent simple observation, which follows readily from the definitions of $A_\ext$ and $A_S$, turns out to be quite useful.
\begin{remark}
  \label{rem:A_ext-vs-A_S}
  For all $h_1 \in \dom(A_S)$ ones has
    \begin{equation}
    \label{eq:A_ext-vs-A_S}
        A_\ext 
        \begin{pmatrix}
            h_1          \\
            S A_{21} h_1
        \end{pmatrix}
        = 
        \begin{pmatrix}
            A_S h_1 \\ 
            A_{21} h_1
        \end{pmatrix}.
    \end{equation}
\end{remark}

We are mainly interested in contractive $C_0$-semigroups generated by the operators $A_\ext$ and $A_S$. 
If $A_\ext$ generates such a semigroup, then so does $A_S$; this was proved in \cite[Theorem~2.2]{ZwGoMa16}:
\begin{theorem}[Zwart, Le Gorrec, Maschke]
    \label{thm:inheritance-of-domination}
    If $A_\ext$ generates a contractive $C_0$-semigroup on $H_1 \times H_2$, then $A_S$ generates a contractive $C_0$-semigroup on $H_1$.
\end{theorem}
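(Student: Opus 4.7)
The natural route is via the Lumer--Phillips theorem: on the Hilbert space $H_1$ it suffices to check that $A_S$ is dissipative and that $\lambda - A_S$ has range $H_1$ for some $\lambda > 0$, since density of $\dom(A_S)$ can then be obtained either from standard results for dissipative operators with a range condition on reflexive spaces or by a direct approximation argument exploiting that $\dom(A_\ext)$ is dense in $H_1 \times H_2$.

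For dissipativity I would pair both sides of~\eqref{eq:A_ext-vs-A_S} with $\begin{pmatrix} h_1 \\ S A_{21} h_1 \end{pmatrix}$ in the $H_1 \times H_2$ inner product. Since $A_\ext$ generates a contractive semigroup it is itself dissipative, so the left-hand side is $\le 0$; expanding the right-hand side yields
\begin{align*}
    0 \ge \re \inner{A_S h_1}{h_1}_{H_1} + \re \inner{A_{21} h_1}{S A_{21} h_1}_{H_2},
\end{align*}
and coercivity of $S$ makes the second summand at least $\nu \norm{A_{21} h_1}_{H_2}^2 \ge 0$, forcing $\re \inner{A_S h_1}{h_1}_{H_1} \le 0$.

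The range condition is the main obstacle. Fix $\lambda > 0$ and $g_1 \in H_1$; the goal is $h_1 \in \dom(A_S)$ with $(\lambda - A_S) h_1 = g_1$. By~\eqref{eq:A_ext-vs-A_S} this is equivalent to finding $\begin{pmatrix} h_1 \\ h_2 \end{pmatrix} \in \dom(A_\ext)$ satisfying the closure relation $h_2 = S A_{21} h_1$ together with $(\lambda - A_\ext) \begin{pmatrix} h_1 \\ h_2 \end{pmatrix} = \begin{pmatrix} g_1 \\ \eta \end{pmatrix}$ for some $\eta \in H_2$. Since $\lambda$ lies in the resolvent set of $A_\ext$, each candidate is of the form $R(\lambda, A_\ext) \begin{pmatrix} g_1 \\ \eta \end{pmatrix}$; reading off the second row of the resolvent equation automatically provides $h_1 \in \dom(A_{21})$ with $A_{21} h_1 = \lambda h_2 - \eta$. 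Inserting this into the closure relation reduces the task to a single linear equation on $H_2$, schematically $L_\lambda \eta = \varphi_\lambda(g_1)$, where $L_\lambda$ is a bounded operator built from the $H_2 \to H_2$ block of $R(\lambda, A_\ext)$ and from $S$, and $\varphi_\lambda(g_1) \in H_2$ depends linearly on $g_1$.

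The hard part, which I expect to be the technical core of the proof, is showing that $L_\lambda$ is invertible for $\lambda$ large enough. The plan is to combine the contraction bound $\norm{\lambda R(\lambda, A_\ext)} \le 1$ with the coercivity constant $\nu$ of $S$ in order to produce a quantitative lower estimate $\norm{L_\lambda \eta}_{H_2} \ge c_\lambda \norm{\eta}_{H_2}$ with $c_\lambda > 0$; once this is in place, setting $\eta := L_\lambda^{-1} \varphi_\lambda(g_1)$ and pushing the resulting pair through $R(\lambda, A_\ext)$ yields the preimage. This final step is where the hypothesis is used in an essential way: it is precisely the interaction between contractivity of the semigroup generated by $A_\ext$ and coercivity of $S$ that prevents the reduced equation for $\eta$ from degenerating, and removing either ingredient would break the argument.
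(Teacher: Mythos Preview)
The paper does not prove this theorem itself; it quotes \cite[Theorem~2.2]{ZwGoMa16}. However, the proof of Proposition~\ref{prop: compact-resolvent} in the paper explicitly reuses the key idea from that reference, so one can infer the intended argument.

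Your dissipativity step is correct and is exactly the computation that appears later in the paper (e.g.\ in the proof of Theorem~\ref{thm:h_1-estimates}). Your overall strategy---Lumer--Phillips plus a range condition obtained by reducing to a linear equation on $H_2$---is also the right shape. The difference lies in how the reduced equation is handled, and here your proposal has a genuine gap.

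The approach underlying the cited result (visible in the proof of Proposition~\ref{prop: compact-resolvent}) is to introduce the bounded perturbation
\[
    P_\lambda = \begin{pmatrix} 0 & 0 \\ 0 & -S^{-1} + \lambda\, \id \end{pmatrix}
\]
and to observe that $(\lambda - (A_\ext + P_\lambda))\begin{psmallmatrix} h_1 \\ h_2 \end{psmallmatrix} = \begin{psmallmatrix} g_1 \\ 0 \end{psmallmatrix}$ is \emph{equivalent} to $h_2 = S A_{21} h_1$ together with $(\lambda - A_S) h_1 = g_1$. Surjectivity of $\lambda - A_S$ then follows once $\lambda - (A_\ext + P_\lambda)$ is invertible, and this is immediate: the operator $A_\ext - \begin{psmallmatrix} \lambda I & 0 \\ 0 & S^{-1} \end{psmallmatrix}$ is a bounded dissipative perturbation of the m-dissipative operator $A_\ext$, hence itself m-dissipative, and it is \emph{strictly} dissipative because both $-\lambda I$ and $-S^{-1}$ are (the latter by coercivity of $S$). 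Thus $0$ lies in its resolvent set.

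Your plan, by contrast, is to write the unknown as $R(\lambda,A_\ext)\begin{psmallmatrix} g_1 \\ \eta \end{psmallmatrix}$ and solve for $\eta$ via an operator $L_\lambda$ on $H_2$, hoping that the bound $\norm{\lambda R(\lambda,A_\ext)} \le 1$ combined with coercivity of $S$ yields $\norm{L_\lambda \eta} \ge c_\lambda \norm{\eta}$. If one unwinds this, $L_\lambda$ is essentially $I - R_{22}(\lambda)(\lambda - S^{-1})$ (with $R_{22}$ the $H_2\to H_2$ block of the resolvent), and a Neumann-series argument fails: $\norm{R_{22}(\lambda)(\lambda - S^{-1})}$ is only bounded by $1 + \norm{S^{-1}}/\lambda$, which does not drop below $1$. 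A lower bound alone also would not give surjectivity of $L_\lambda$. What actually makes the reduced problem solvable is not a norm smallness estimate but the \emph{dissipativity} of the perturbation $-S^{-1}$; once you see that, you are effectively back to the perturbation argument above. So your outline is on the right track strategically, but the stated mechanism for inverting $L_\lambda$ is the wrong one, and the missing idea is precisely the bounded-dissipative-perturbation viewpoint that the original proof uses.
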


This generation result naturally raises the question which further properties are inherited by $A_S$ from $A_\ext$. 
In Section~\ref{sec:inheritance-of-stability} we will discuss this question in detail for spectral and stability properties.

\subsection{Splitting of $A_1$}

In some -- though not all -- concrete examples the following additional splitting assumption is satisfied:
\begin{splitting_assumption}
    \label{ass:splitting}
    The domain of $A_1$ splits as 
    \begin{align*}
        \dom(A_1) = \dom(A_{11}) \times \dom(A_{12})
    \end{align*}
    for vector subspaces $\dom(A_{11}) \subseteq H_1$ and $\dom(A_{12}) \subseteq H_2$, and hence $A_1$ can be written as 
    \begin{align*}
        A_1 = 
        \begin{pmatrix}
            A_{11} & A_{12}
        \end{pmatrix}
    \end{align*}
    for operators $A_{11}: \dom(A_{11}) \subseteq H_1 \to H_1$ and $A_{12}: \dom(A_{12}) \subseteq H_2 \to H_1$.
\end{splitting_assumption}

In cases where this assumption is satisfied, the domain and the action of $A_\ext$ can be expressed in terms of the three block operators $A_{11}$, $A_{12}$, and $A_{21}$. 
Let us write this down explicitly in the following proposition:
\begin{proposition}[$A_\ext$ and $A_S$ under the splitting assumption]
    \label{prop:splitting-repr}
    Let the splitting assumption~\ref{ass:splitting} be satisfied.
    \begin{enumerate}[label=\upshape(\alph*)]
        \item\label{prop:splitting-repr:itm:A_ext}
        The domain of $A_\ext$ is given by
        \begin{align*}
            \qquad \quad \quad & 
            \dom(A_\ext) 
            \\
            & = 
            \left\{ 
                \begin{pmatrix}
                    h_1 \\ 
                    h_2
                \end{pmatrix} 
                \in H_1 \times H_2
                \suchthat 
                h_1 \in \dom(A_{11}) \cap \dom(A_{21})
                ,\, 
                h_2 \in \dom(A_{12})
            \right\} 
            \\
            & = 
            \Big( \dom(A_{11}) \cap \dom(A_{21}) \Big) \times \dom(A_{12}),
        \end{align*}
        and $A_\ext$ acts as the block operator 
        \begin{align*}
            A_\ext 
            = 
            \begin{pmatrix}
                A_{11} & A_{12} \\ 
                A_{21} & 0
            \end{pmatrix}.
        \end{align*}

        \item\label{prop:splitting-repr:itm:A_S} 
        The the domain of $A_S$ is given by
        \begin{align*}
            \qquad \qquad
            \dom(A_S) 
            = 
            \left\{ h_1 \in \dom(A_{11}) \cap \dom(A_{21})  \suchthat  S A_{21} h_1 \in \dom(A_{12}) \right\}
        \end{align*}
        and $A_S$ acts as
        \begin{align*}
            A_S h_1 = A_{11} h_1 + A_{12} S A_{21} h_1
        \end{align*}
        for all $h_1 \in \dom(A_S)$.
    \end{enumerate}
\end{proposition}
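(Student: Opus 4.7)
The proof is a direct unpacking of Definition~\ref{def:operators} under the Splitting Assumption~\ref{ass:splitting}, so my plan is simply to substitute the product structure of $\dom(A_1)$ into each clause and read off the result. I do not expect any genuine obstacle; the only thing that requires a modicum of care is keeping the role of $\dom(A_{11})$ versus $\dom(A_{21})$ straight, since only one of them is intersected and only one of them combines with $\dom(A_{12})$ via a Cartesian product.

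For part~\ref{prop:splitting-repr:itm:A_ext}, I would start from
\begin{align*}
    \dom(A_\ext) = \left\{ \begin{pmatrix} h_1 \\ h_2 \end{pmatrix} \in \dom(A_1) \suchthat h_1 \in \dom(A_{21}) \right\}
\end{align*}
as given in Definition~\ref{def:operators}\ref{def:operators:itm:A_ext}. Plugging in $\dom(A_1) = \dom(A_{11}) \times \dom(A_{12})$ and noting that the condition $h_1 \in \dom(A_{21})$ only constrains the first component immediately gives the product representation $\bigl(\dom(A_{11}) \cap \dom(A_{21})\bigr) \times \dom(A_{12})$. For the action, the splitting assumption says $A_1 = \begin{pmatrix} A_{11} & A_{12} \end{pmatrix}$, so for $(h_1, h_2) \in \dom(A_\ext)$ we have $A_1(h_1, h_2) = A_{11} h_1 + A_{12} h_2$; combining this with the second row $(A_{21}, 0)$ from Definition~\ref{def:operators}\ref{def:operators:itm:A_ext} yields the claimed $2 \times 2$ block representation of $A_\ext$.

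For part~\ref{prop:splitting-repr:itm:A_S}, I would start from
\begin{align*}
    \dom(A_S) = \left\{ h_1 \in \dom(A_{21}) \suchthat \begin{pmatrix} h_1 \\ S A_{21} h_1 \end{pmatrix} \in \dom(A_1) \right\}
\end{align*}
from Definition~\ref{def:operators}\ref{def:operators:itm:A_S}. Under the splitting assumption, the membership condition is equivalent to $h_1 \in \dom(A_{11})$ and $S A_{21} h_1 \in \dom(A_{12})$, which produces exactly the claimed description of $\dom(A_S)$. Finally, for $h_1 \in \dom(A_S)$, applying the row formula $A_1 = \begin{pmatrix} A_{11} & A_{12} \end{pmatrix}$ to $\bigl(h_1, S A_{21} h_1\bigr)^{\top}$ gives
\begin{align*}
    A_S h_1 = A_1 \begin{pmatrix} h_1 \\ S A_{21} h_1 \end{pmatrix} = A_{11} h_1 + A_{12} S A_{21} h_1,
\end{align*}
as asserted. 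Since every step is a direct substitution, the write-up can be kept quite short.
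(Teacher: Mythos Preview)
Your proposal is correct and matches the paper's own proof, which simply states that the result ``follows readily from the definitions of $A_{\ext}$ and $A_S$.'' You have spelled out exactly those routine substitutions, so nothing more is needed.
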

\begin{proof}
    This follows readily from the definitions of $A_{\ext}$ and $A_S$.
\end{proof}

Note that part~\ref{prop:splitting-repr:itm:A_S} of the proposition can be rephrased by saying that, under the splitting assumption~\ref{ass:splitting}, $A_S = A_{11} + A_{12} S A_{21}$, where we use the common convention that the sum of two operators is defined on the intersection of their domains and that the composition of two operators has its maximal domain.
If, in addition to the splitting assumption~\ref{ass:splitting}, the operator $A_{11}$ is everywhere defined and bounded, one can say more about the components of $A_\ext$, see Proposition~\ref{prop:skew-hermitian-equiv}.

\subsection{Skew-adjoint properties}

Let us mention again that we are mainly interested in the case where $A_\ext$ generates a contraction semigroup and is thus, in particular, dissipative.
If the splitting assumption~\ref{ass:splitting} is satisfied, 
dissipativity of $A_\ext$ implies that $A_{21}$ and $A_{12}$ are, in a sense, skew-adjoint to each other. 
More precisely, the following holds.
\begin{theorem}
    \label{thm:skew-hermitian}
    If $A_\ext$ is dissipative and the splitting assumption~\ref{ass:splitting} is satisfied, then
    \begin{align*}
        \inner{h_1}{A_{12} h_2}_{H_1}
        = 
        - \inner{A_{21} h_1}{h_2}_{H_2}
    \end{align*}
    for all $(h_1,h_2) \in \dom(A_{\ext})$.
\end{theorem}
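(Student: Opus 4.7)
The plan is to exploit the dissipativity of $A_\ext$ by probing it with vectors of the form $(h_1, \lambda h_2)$ for arbitrary $\lambda \in \bbC$, and to conclude that the cross terms in the associated quadratic form must satisfy a Hermitian identity.

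Under the splitting assumption, Proposition~\ref{prop:splitting-repr} gives the block-matrix form of $A_\ext$, and its domain factors as a product. In particular, whenever $(h_1, h_2) \in \dom(A_\ext)$, we also have $(h_1, \lambda h_2) \in \dom(A_\ext)$ for every $\lambda \in \bbC$. Applying the dissipativity inequality for $A_\ext$ at this vector and using the sesquilinearity convention to pull $\lambda$ and $\bar\lambda$ out of the inner products, one obtains, with the shorthand $\alpha := \inner{h_1}{A_{12} h_2}_{H_1}$ and $\beta := \inner{h_2}{A_{21} h_1}_{H_2}$,
\begin{align*}
    \re \inner{h_1}{A_{11} h_1}_{H_1} + \re(\bar\lambda \alpha) + \re(\lambda \beta) \le 0
    \qquad \text{for all } \lambda \in \bbC.
\end{align*}

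Since the first summand is a fixed real number, the real-affine map $\lambda \mapsto \re(\bar\lambda \alpha + \lambda \beta)$ must be bounded above on all of $\bbC$. Choosing $\lambda = t \in \bbR$ and letting $t \to \pm\infty$ forces $\re(\alpha + \beta) = 0$; choosing $\lambda = \ui t$ with $t \in \bbR$ analogously yields $\im(\alpha - \beta) = 0$. A short algebraic manipulation shows that these two real-scalar conditions are jointly equivalent to $\beta = -\bar\alpha$, i.e.\
\begin{align*}
    \inner{h_2}{A_{21} h_1}_{H_2} = -\overline{\inner{h_1}{A_{12} h_2}_{H_1}} = -\inner{A_{12} h_2}{h_1}_{H_1},
\end{align*}
and taking the complex conjugate of both sides produces the claimed identity.

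The entire argument is essentially bookkeeping once the right test vectors have been identified. The only point requiring some care is tracking the sesquilinear convention correctly, so that the two real-linear conditions extracted from the real and imaginary directions of $\lambda$ really do pin down both $\re(\alpha + \beta)$ and $\im(\alpha - \beta)$; no deeper analytic obstacle is expected.
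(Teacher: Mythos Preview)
Your proof is correct and follows essentially the same idea as the paper's: probe the dissipativity inequality with rescaled test vectors to force the cross terms to satisfy a linear identity. The one minor difference is that you scale $h_2$ by $\lambda\in\bbC$, whereas the paper scales $h_1$ by $t>0$; your choice is actually a touch cleaner, since the $A_{11}$ term does not involve $h_2$ and hence stays constant, so you do not need the paper's limiting step $t\downarrow 0$ to kill the quadratic $t^2\langle A_{11}h_1,h_1\rangle$ contribution before reading off the identity.
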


\begin{proof}
    Let 
    $
        \begin{psmallmatrix}
		h_1 \\ h_2
	\end{psmallmatrix} 
        \in 
        \dom(A_\ext) 
        = \bigl(\dom(A_{11}) \cap \dom(A_{21})\bigr)\times \dom(A_{12})
    $.
    The dissipativity of $A_\ext$ implies
    \begin{align*}
	\begin{split}
		0 
            & \geq 
            \re 
            \inner{
                A_\ext 
                \begin{pmatrix}
    			h_1 \\ h_2
				\end{pmatrix}
            }{
                \begin{pmatrix}
    			h_1 \\ h_2
			\end{pmatrix}
            }_{H_1 \times H_2}
            \\
		& = 
            \re 
            \bigl( 
                \inner{A_{11} h_1}{h_1}_{H_1} 
                + 
                \inner{A_{12} h_2}{h_1}_{H_1} 
                + 
                \inner{A_{21} h_1}{h_2}_{H_2}
            \bigr)
            .
	\end{split}
    \end{align*}
    Since \(h_1 \in  \dom(A_{11}) \cap \dom(A_{21})\) was arbitrary, the same inequality remains true if we substitute $h_1$ with \(t h_1\) for any $t > 0$; doing so and then dividing by $t$ yields
    \begin{equation*}
	0 \geq 
        \re 
        \bigl( 
            t \inner{A_{11} h_1}{h_1}_{H_1} 
            + 
            \inner{A_{12} h_2}{h_1}_{H_1} 
            + 
            \inner{A_{21} h_1}{h_2}_{H_2}
        \bigr)
    \end{equation*}
    for all $t > 0$. 
    By letting $t \downarrow 0$ we thus obtain
    \begin{equation*}
	0  \geq 
        \re 
        \bigl(
            \inner{A_{12} h_2}{h_1}_{H_1} 
            + 
            \inner{A_{21} h_1}{h_2}_{H_2}
        \bigr) .
    \end{equation*}
    As this holds for all \(h_1 \in \dom(A_{11}) \cap \dom(A_{21})\) it also holds for \(-h_1\), so we even obtain equality, i.e.,
    \begin{equation*}
        \re \inner{h_1}{A_{12} h_2}_{H_1}
        = 
        - \re \inner{A_{21} h_1}{h_2}_{H_2}
    \end{equation*}
    Finally, we substitute $\ui h_1$ for $h_1$ to see that one has equality of the imaginary parts, too, and thus
    \begin{equation*}
        \inner{h_1}{A_{12} h_2}_{H_1}
        = 
        - \inner{A_{21} h_1}{h_2}_{H_2},
    \end{equation*}
    as claimed.
\end{proof}

If the operator $A_{11}$ is even bounded, then Theorem~\ref{thm:skew-hermitian} implies that $A_{12}$ is indeed minus the adjoint of the operator $A_{21}$. 
To show this we use the following lemma.

\begin{lemma}
    \label{lem:splitting-nice}
    Assume that $A_\ext$ is densely defined and closed, that the splitting assumption~\ref{ass:splitting} is satisfied, and that the operator $A_{11}$ is everywhere defined and bounded.
    Then $A_{21}$ and $A_{12}$ are densely defined and closed, and we have
    \begin{equation*}
	A_\ext^* 
        = 
        \begin{pmatrix}
		A_{11}^* & A_{21}^* \\ 
            A_{12}^* & 0
	\end{pmatrix},
    \end{equation*}
    with domain $\dom(A_\ext^*) = \dom(A_{12}^*) \times \dom(A_{21}^*)$.
\end{lemma}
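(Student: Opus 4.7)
The plan is to decompose $A_\ext$ into a bounded everywhere-defined summand plus a closed off-diagonal operator, and then read off the conclusions from a direct adjoint computation for the off-diagonal part.

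First, by Proposition~\ref{prop:splitting-repr} together with $\dom(A_{11}) = H_1$, one has $\dom(A_\ext) = \dom(A_{21}) \times \dom(A_{12})$. I would write $A_\ext = K + B$, where
\begin{align*}
    K := \begin{pmatrix} A_{11} & 0 \\ 0 & 0 \end{pmatrix}
    \qquad \text{and} \qquad
    B := \begin{pmatrix} 0 & A_{12} \\ A_{21} & 0 \end{pmatrix},
\end{align*}
with $\dom(B) = \dom(A_\ext)$. Since $K$ is bounded on $H_1 \times H_2$, the operator $B = A_\ext - K$ is densely defined and closed. Density of the product $\dom(A_{21}) \times \dom(A_{12})$ immediately gives density of $\dom(A_{21})$ in $H_1$ and of $\dom(A_{12})$ in $H_2$. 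For closedness of $A_{21}$, I would take $h_1^{(n)} \in \dom(A_{21})$ with $h_1^{(n)} \to h_1$ and $A_{21} h_1^{(n)} \to g_2$, and apply closedness of $B$ to the sequence $(h_1^{(n)}, 0) \in \dom(B)$, which converges to $(h_1, 0)$ and satisfies $B(h_1^{(n)}, 0) = (0, A_{21} h_1^{(n)}) \to (0, g_2)$. Running the analogous argument on vectors of the form $(0, h_2^{(n)})$ gives closedness of $A_{12}$.

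Since $K$ is bounded and everywhere defined, the standard identity $(K + B)^* = K^* + B^*$ with $\dom((K+B)^*) = \dom(B^*)$ reduces the adjoint computation to that of $B^*$. A pair $(g_1, g_2)$ belongs to $\dom(B^*)$ if and only if the map
\begin{align*}
    (h_1, h_2) \mapsto \inner{A_{12} h_2}{g_1}_{H_1} + \inner{A_{21} h_1}{g_2}_{H_2}
\end{align*}
is bounded on $\dom(B)$; testing separately on $(h_1, 0)$ and $(0, h_2)$ shows this is equivalent to $g_2 \in \dom(A_{21}^*)$ and $g_1 \in \dom(A_{12}^*)$, in which case $B^*(g_1, g_2) = (A_{21}^* g_2, A_{12}^* g_1)$. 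Combining with $K^* = \begin{psmallmatrix} A_{11}^* & 0 \\ 0 & 0 \end{psmallmatrix}$ yields precisely the matrix representation and the domain claimed for $A_\ext^*$.

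I do not expect a serious obstacle. The essential structural input is that the splitting assumption together with $A_{11}$ being everywhere defined forces $\dom(A_\ext)$ to be a bona fide product domain, which is what allows density, closedness, and the adjoint formula to transfer componentwise. The only routine technicality is the standard ``separate from joint'' trick of setting one coordinate of a test vector to zero to decouple the two halves of the sesquilinear expression.
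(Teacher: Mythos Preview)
Your proposal is correct and follows essentially the same approach as the paper: subtract the bounded $A_{11}$ block, deduce density and closedness of the components from the resulting closed off-diagonal operator, and compute the adjoint of the off-diagonal part directly before adding $K^*$ back. The only difference is that you spell out the sequence argument for closedness and the ``set one coordinate to zero'' trick for $B^*$, whereas the paper leaves both as routine observations.
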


\begin{proof}
    \emph{Density of the domains:}
    It follows from the splitting assumption together with $\dom(A_{11}) = H_1$ that $\dom(A_\ext) = \dom(A_{21}) \times \dom(A_{12})$, see Proposition~\ref{prop:splitting-repr}\ref{prop:splitting-repr:itm:A_ext}.
    Since $\dom(A_\ext)$ is dense in $H_1 \times H_2$, we thus conclude that $\dom(A_{21})$ is dense in $H_1$ and $\dom(A_{12})$ is dense in $H_2$.

    \emph{Closedness of $A_{21}$ and $A_{12}$:}
    Since the sum of a closed and a bounded linear operator is again closed, it follows that the operator 
    \begin{align*} 
        \begin{pmatrix}
		0      & A_{12} \\ 
            A_{21} & 0
	\end{pmatrix}
    \end{align*}
    with domain $\dom(A_{21}) \times \dom(A_{12}) = \dom(A_\ext)$, is closed. 
    From this, one can easily deduce that $A_{21}$ and $A_{12}$ are closed, too.

    \emph{Representation of $A_\ext^*$:}
    If $A_{11} = 0$, it is straightforward to obtain the claimed form of $A_\ext^*$ and its domain from the definition of adjoints. 
    The claim for non-zero $A_{11}$ can be derived from this case by using the general observation that, for a densely defined operator $A$ and a bounded and everywhere defined operator $B$, one has $(A+B)^* = A^* + B^*$.
\end{proof}

\begin{proposition}
    \label{prop:skew-hermitian-equiv}
    Let the splitting assumption~\ref{ass:splitting} be satisfied and let $A_{11}$ be everywhere defined and bounded.
    The following are equivalent: 
    \begin{enumerate}[label=\upshape(\roman*)]
        \item\label{prop:skew-hermitian-equiv:itm:individual-properties} 
        The operators $A_{21}$ and $A_{12}$ are densely defined and closed and satisfy $A_{12} = - A_{21}^*$.  
        Moreover, the operator $A_{11}$ is dissipative.
        
        \item\label{prop:skew-hermitian-equiv:itm:A_ext} 
        The operator $A_\ext$ generates a contractive $C_0$-semigroup on $H_1 \times H_2$.
    \end{enumerate}
    If the equivalent assertions~\ref{prop:skew-hermitian-equiv:itm:individual-properties} and~\ref{prop:skew-hermitian-equiv:itm:A_ext} are satisfied, then 
    \begin{align*}
        \dom(A_S) 
        & = 
        \big\{
            h_1 \in \dom(A_{21}) \suchthatManual SA_{21} h_1 \in \dom(A_{21}^*)
        \big\}
        \qquad \text{and}
        \\
        A_S h_1 
        & = 
        A_{11} h_1 - A_{21}^* S A_{21} h_1 
    \end{align*}
    for all $h_1 \in \dom(A_S)$.
\end{proposition}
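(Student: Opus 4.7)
The plan is to prove each implication by reducing to the tools already available in the excerpt, namely Theorem~\ref{thm:skew-hermitian} and Lemma~\ref{lem:splitting-nice}, and to use a Lumer--Phillips type characterization on Hilbert space (a densely defined, closed operator generates a contractive $C_0$-semigroup iff both it and its adjoint are dissipative).

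For \impliesProof{prop:skew-hermitian-equiv:itm:individual-properties}{prop:skew-hermitian-equiv:itm:A_ext}, I would first note that the splitting assumption together with $\dom(A_{11}) = H_1$ gives $\dom(A_\ext) = \dom(A_{21}) \times \dom(A_{12})$, which is dense because $A_{21}$ is densely defined and $\dom(A_{12}) = \dom(-A_{21}^*)$ is dense (the adjoint of a densely defined closed operator on a Hilbert space is densely defined). Closedness of $A_\ext$ follows by writing it as the sum of the bounded operator $\begin{psmallmatrix} A_{11} & 0 \\ 0 & 0\end{psmallmatrix}$ and the closed off-diagonal operator $\begin{psmallmatrix} 0 & A_{12} \\ A_{21} & 0 \end{psmallmatrix}$ (closedness of the latter being an easy consequence of closedness of $A_{12}$ and $A_{21}$). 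For dissipativity, the cross terms in $\re\inner{A_\ext(h_1,h_2)}{(h_1,h_2)}$ cancel: using $A_{12} = -A_{21}^*$, one has $\inner{A_{12}h_2}{h_1} + \inner{A_{21}h_1}{h_2} = -\inner{A_{21}^*h_2}{h_1} + \overline{\inner{A_{21}^*h_2}{h_1}}$, which is purely imaginary. Thus dissipativity of $A_\ext$ reduces to dissipativity of $A_{11}$. Applying Lemma~\ref{lem:splitting-nice} then gives $A_\ext^* = \begin{psmallmatrix} A_{11}^* & A_{21}^* \\ -A_{21} & 0\end{psmallmatrix}$ (using $A_{12}^* = -A_{21}^{**} = -A_{21}$), and the same cancellation shows $A_\ext^*$ is dissipative. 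Lumer--Phillips yields generation of a contractive semigroup.

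For \impliesProof{prop:skew-hermitian-equiv:itm:A_ext}{prop:skew-hermitian-equiv:itm:individual-properties}, Lemma~\ref{lem:splitting-nice} already provides density, closedness of $A_{12}$ and $A_{21}$, and the shape of $A_\ext^*$. Theorem~\ref{thm:skew-hermitian} applied to the dissipative operator $A_\ext$ gives $\inner{h_1}{A_{12}h_2} = -\inner{A_{21}h_1}{h_2}$ on $\dom(A_\ext)$, which exactly says $A_{12} \subseteq -A_{21}^*$. For the reverse inclusion -- the delicate step I expect to be the main obstacle -- I would use that on a Hilbert space the adjoint of a contraction semigroup is again a contraction semigroup, so $A_\ext^*$ is also a dissipative operator with a splitting domain $\dom(A_{12}^*)\times\dom(A_{21}^*)$. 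Theorem~\ref{thm:skew-hermitian} applied to $A_\ext^*$ yields $\inner{A_{12}^*h_1}{h_2} = -\inner{h_1}{A_{21}^*h_2}$ for all $h_1 \in \dom(A_{12}^*)$, $h_2 \in \dom(A_{21}^*)$, and by the defining property of the second adjoint this forces $\dom(A_{21}^*) \subseteq \dom(A_{12}^{**}) = \dom(A_{12})$ with $A_{12} = -A_{21}^*$ on $\dom(A_{21}^*)$. Finally, dissipativity of $A_{11}$ is obtained by again substituting the already established cancellation of cross terms into $\re\inner{A_\ext(h_1,h_2)}{(h_1,h_2)} \le 0$, yielding $\re\inner{A_{11}h_1}{h_1} \le 0$ for all $h_1 \in \dom(A_{21})$, and then extending to all of $H_1$ by density and boundedness of $A_{11}$.

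The formula for $A_S$ is then immediate from Proposition~\ref{prop:splitting-repr}\ref{prop:splitting-repr:itm:A_S}: since $\dom(A_{11}) = H_1$ and $\dom(A_{12}) = \dom(A_{21}^*)$, the conditions $h_1 \in \dom(A_{11})\cap\dom(A_{21})$ and $SA_{21}h_1 \in \dom(A_{12})$ reduce to $h_1 \in \dom(A_{21})$ and $SA_{21}h_1 \in \dom(A_{21}^*)$, and the action $A_S h_1 = A_{11}h_1 + A_{12}SA_{21}h_1$ becomes $A_{11}h_1 - A_{21}^*SA_{21}h_1$.
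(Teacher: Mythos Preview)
Your proof is correct and follows essentially the same route as the paper: Lemma~\ref{lem:splitting-nice} for density/closedness and the form of $A_\ext^*$, Theorem~\ref{thm:skew-hermitian} applied first to $A_\ext$ (giving $-A_{12}\subseteq A_{21}^*$) and then to $A_\ext^*$ (giving $-A_{21}^*\subseteq A_{12}^{**}=A_{12}$), and Proposition~\ref{prop:splitting-repr}\ref{prop:splitting-repr:itm:A_S} for the formula for $A_S$. The only cosmetic differences are that for \impliesProof{prop:skew-hermitian-equiv:itm:individual-properties}{prop:skew-hermitian-equiv:itm:A_ext} the paper invokes Stone's theorem plus bounded perturbation rather than Lumer--Phillips directly, and for the dissipativity of $A_{11}$ in \impliesProof{prop:skew-hermitian-equiv:itm:A_ext}{prop:skew-hermitian-equiv:itm:individual-properties} the paper simply tests $A_\ext$ against vectors $(h_1,0)$ (which is slightly quicker than your cancellation argument, since it does not require $A_{12}=-A_{21}^*$ to be established first).
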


\begin{proof}
    \impliesProof{prop:skew-hermitian-equiv:itm:individual-properties}{prop:skew-hermitian-equiv:itm:A_ext}
    The assumptions of the corollary imply the $A_\ext$ can be written as
    \begin{align*} 
        A_\ext 
        =
        \begin{pmatrix}
		0      & A_{12} \\ 
            A_{21} & 0
	\end{pmatrix}
        +
        \begin{pmatrix}
		A_{11} & 0 \\ 
            0      & 0
	\end{pmatrix}
    ,
    \end{align*}
    where the first summand has the domain $\dom(A_{21}) \times \dom(A_{12})$ and is densely defined and skew-adjoint according to condition~\ref{prop:skew-hermitian-equiv:itm:individual-properties} and
    Lemma~\ref{lem:splitting-nice}. 
    The second summand is everywhere defined and boudned, and dissipative by~\ref{prop:skew-hermitian-equiv:itm:individual-properties}.
    Thus, $A_\ext$ generates a contractive $C_0$-semigroup by standard perturbation theory.
    
    \impliesProof{prop:skew-hermitian-equiv:itm:A_ext}{prop:skew-hermitian-equiv:itm:individual-properties}
    Dissipativity of $A_{11}$ follows by testing $A_\ext$ against vectors of the form $(h_1,0)$ from the left and the right, where $h_1 $ runs through $H_1$.
    The fact that $A_{12}$ and $A_{21}$ are densely defined and closed follows from Lemma~\ref{lem:splitting-nice}. 
    Moreover, since $A_\ext$ is dissipative, Theorem~\ref{thm:skew-hermitian} implies
    \begin{equation*}
	-A_{12} \subseteq A_{21}^* 
        .
    \end{equation*}
    So it remains to show the converse inclusion. 
    According to Lemma~\ref{lem:splitting-nice} the adjoint of $A_\ext$ is given by
    \begin{equation*}
        A_\ext^* 
        = 
        \begin{pmatrix}
		A_{11}^* & A_{21}^* \\ A_{12}^* & 0
	\end{pmatrix},
    \end{equation*}
    with domain \(\dom(A_\ext^*) = \dom(A_{12}^*) \times \dom(A_{21}^*).\)
    Since $A_\ext^*$ is dissipative and densely defined as well, we can apply Theorem~\ref{thm:skew-hermitian} to this operator, too, and obtain
    \begin{equation*}
	-A_{21}^* \subseteq A_{12}^{**} = A_{12},
    \end{equation*}
    where the last equality follows from the closedness of $A_{12}$.

    Finally, let the equivalent conditions~\ref{prop:skew-hermitian-equiv:itm:individual-properties} and~\ref{prop:skew-hermitian-equiv:itm:A_ext} be satisfied. 
    The claimed formulas for $\dom(A_S)$ and $A_S$ then follow from Proposition~\ref{prop:splitting-repr}\ref{prop:splitting-repr:itm:A_S} and from $A_{12} = -A_{21}^*$.
\end{proof}

\subsection{Relation to form methods in the parabolic case}
\label{subsec:forms}

In this subsection we will, in the situation of Proposition~\ref{prop:skew-hermitian-equiv}, relate the operator $A_S$ to the study of parabolic equations via form methods. 
By doing so we will show that the semigroup $(e^{tA_S})_{t \ge 0}$ generated by $A_S$ is analytic under the assumptions of Proposition~\ref{prop:skew-hermitian-equiv}. 
Therefore, it is reasonable to think of the Cauchy problem $\dot u = A_S u$ as a parabolic equation whenever the assumptions of Proposition~\ref{prop:skew-hermitian-equiv} are satisfied, and we will from now on refer to this situation as \emph{the parabolic case}.

Many linear parabolic equations on Hilbert spaces can be studied by means of \emph{sesqui-linear forms} that are related to the generator of the analytic semigroup that governs the equation. 
In fact, those analytic $C_0$-semigroups whose generators stem from a sequilinear form can be precisely characterized, see e.g.\ \cite[Section~5.3.4]{Ar04}. 
If one allows for equivalent renorming of the Hilbert space, the class of operators that can be treated by form methods becomes even a bit larger, see e.g.\ \cite[Section~5.3.5]{Ar04}.
In the following we provide more context to our abstract framework by describing its relation to sesquilinear forms.

There are two different -- though essentially equivalent -- approaches to constructing operators from sesquilinear forms: 
an algebraic approach where the form is a priori considered an algebraic object and is endowed with a norm only later on (see e.g.\ \cite[Chapter~VI]{Kato1980}) and an approach that assumes the form domain to be a Hilbert space from the beginning (see e.g.\ \cite[Section~XVIII.3]{DautrayLions2000Vol5}).
Let us briefly outline how the latter approach works; we follow \cite[Section~5.3]{Ar04}.
Let $H$ be a complex Hilbert space. 
A \emph{closed form} on $H$ is a pair $(a,V)$ with the following properties: 

\begin{enumerate}[label=\upshape(\alph*)]
    \item  
    The space $V$ carries a norm $\norm{\argument}_V$ that renders $V$ a Hilbert space. 
    Moreover, $V$ is a dense vector subspace of $H$ and the inclusion map
    \begin{align*}
        (V, \norm{\argument}_V) \hookrightarrow (H, \norm{\argument}_H)
    \end{align*}
    is continuous. 

    \item 
     The object $a$ is a sesquilinear map $a: V \times V \to \bbC$ -- meaning that it is linear in the first component and antilinear in the second -- that is continuous with respect to the norm $\norm{\argument}_V$, i.e., there exists a number $M \ge 0$ such that the estimate $\modulus{a(u,v)} \le M \norm{u}_V \norm{v}_V$ holds for all $u,v \in V$.

     \item 
     The form $(a, V)$ is \emph{elliptic}, meaning that there exist numbers $\alpha > 0$ and $\omega \in \bbR$ such that 
    \begin{align*}
        \re a(u,u) + \omega \norm{u}_H^2 
        \ge 
        \alpha \norm{u}_V^2
    \end{align*}
    for every $u \in V$. 
\end{enumerate}

Now consider the closed form $(a, V)$ on $H$.
We can define the closed linear operator $A: \dom(A) \subseteq H \to H$ by
\begin{align*}
\dom(A) &= \{u \in V \suchthatManual \text{ there is } f \in H \text{ such that } a(u,v) = \inner{f}{v}_H \text{ for all } v \in V\}, \\ 
    Au &= f,
\end{align*}
where the vector $f$ in the second line is the same vector that occurs within the set in the first line 
(note that $f$ is uniquely determined as $V$ is, by assumption, dense in $H$).
The operator $-A$ generates an analytic $C_0$-semigroup on $H$ \cite[Section~5.3.1]{Ar04}.

In the framework described in Section~\ref{section:setting} one can describe the operator $A_S$ by means of form methods, provided that the splitting assumption~\ref{ass:splitting} is satisfied and that the operator block $A_{21}$ satisfies a certain lower estimate.
In the following theorem we endow $\dom(A_{21})$ with the graph norm of $A_{21}$ given by $\norm{u}_{A_{21}}^2 \coloneqq \norm{A_{21}u}_{H_2}^2 + \norm{u}_{H_1}^2$ for all $u \in \dom(A_{21})$.
\begin{theorem}
    \label{thm:via-forms}
    Let the splitting assumption~\ref{ass:splitting} be satisfied and let $A_{11}$ be everywhere defined and bounded. 
    Assume moreover that $A_\ext$ generates a contractive $C_0$-semigroup on $H_1 \times H_2$. 
    Define the sesquilinear map $a: \dom(A_{21}) \times \dom(A_{21}) \to \bbC$ by 
    \begin{align*}
        a(u,v) 
        \coloneqq 
        \inner{S A_{21}u}{A_{21}v}_{H_2} 
        - 
        \inner{A_{11}u}{v}_{H_1}
    \end{align*}
    for all $u,v \in \dom(A_{21})$.
    
    Then $(a, \dom(A_{21}))$ is a closed form on $H_1$ and
    the operator on $H_1$ associated to $(a, \dom(A_{21}))$ is $-A_S$. 
    Hence, the $C_0$-semigroup $(e^{tA_S})_{t \ge 0}$ on $H_1$ is analytic.
\end{theorem}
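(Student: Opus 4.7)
My plan is to verify the three closed-form axioms for $(a, \dom(A_{21}))$ on $H_1$, then identify the operator associated to this form with $-A_S$ using the characterization from Proposition~\ref{prop:skew-hermitian-equiv}, and finally invoke the analyticity result cited from \cite[Section~5.3.1]{Ar04}. Throughout I will use that, by Proposition~\ref{prop:skew-hermitian-equiv}, the hypotheses imply that $A_{21}$ is densely defined and closed, $A_{12} = -A_{21}^*$, $A_{11}$ is dissipative, and $\dom(A_S) = \{h_1 \in \dom(A_{21}) : SA_{21}h_1 \in \dom(A_{21}^*)\}$ with $A_S h_1 = A_{11}h_1 - A_{21}^*SA_{21}h_1$.

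For the closed-form axioms: the graph norm $\norm{\cdot}_{A_{21}}$ turns $\dom(A_{21})$ into a Hilbert space precisely because $A_{21}$ is closed, the inclusion into $H_1$ is a contraction by definition of the graph norm, and density of $\dom(A_{21})$ in $H_1$ is part of Lemma~\ref{lem:splitting-nice}. Continuity of $a$ follows immediately from Cauchy--Schwarz and the bounds on $S$ and $A_{11}$:
\begin{equation*}
    \modulus{a(u,v)} \le \norm{S}\,\norm{A_{21}u}_{H_2}\norm{A_{21}v}_{H_2} + \norm{A_{11}}\,\norm{u}_{H_1}\norm{v}_{H_1} \le (\norm{S}+\norm{A_{11}})\,\norm{u}_{A_{21}}\norm{v}_{A_{21}}.
\end{equation*}
Ellipticity is the only mildly delicate point, and it is precisely where coercivity of $S$ and dissipativity of $A_{11}$ enter in a complementary way: for $u \in \dom(A_{21})$,
\begin{equation*}
    \re a(u,u) = \re\inner{SA_{21}u}{A_{21}u}_{H_2} - \re\inner{A_{11}u}{u}_{H_1} \ge \nu \norm{A_{21}u}_{H_2}^2,
\end{equation*}
so adding $\nu\norm{u}_{H_1}^2$ yields $\nu\norm{u}_{A_{21}}^2$, giving ellipticity with $\alpha = \omega = \nu$.

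For the identification of the associated operator: let $B$ denote the operator associated to $(a,\dom(A_{21}))$ in the sense of the excerpt. If $u \in \dom(A_S)$, then $SA_{21}u \in \dom(A_{21}^*)$, so for every $v \in \dom(A_{21})$ we may move $A_{21}$ across the inner product and obtain
\begin{equation*}
    a(u,v) = \inner{A_{21}^*SA_{21}u - A_{11}u}{v}_{H_1} = \inner{-A_S u}{v}_{H_1},
\end{equation*}
which shows $u \in \dom(B)$ and $Bu = -A_S u$. Conversely, if $u \in \dom(B)$ with $Bu = f$, then the defining identity $\inner{SA_{21}u}{A_{21}v}_{H_2} = \inner{f+A_{11}u}{v}_{H_1}$ for all $v \in \dom(A_{21})$ says exactly that $SA_{21}u \in \dom(A_{21}^*)$ with $A_{21}^*SA_{21}u = f + A_{11}u$; hence $u \in \dom(A_S)$ and $-A_S u = f = Bu$. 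Thus $B = -A_S$, and the cited form-theoretic result immediately gives that $-B = A_S$ generates an analytic $C_0$-semigroup on $H_1$.

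The main (minor) obstacle is the ellipticity step, where one must observe that the contribution of $A_{11}$ has the right sign thanks to the dissipativity granted by the contraction-semigroup hypothesis, so that the quadratic contribution $\nu\norm{A_{21}u}_{H_2}^2$ from $S$ is not degraded. The rest is essentially an algebraic rearrangement identifying the functional $v \mapsto a(u,v)$ as an $H_1$-inner product with a vector which, via the duality $A_{12} = -A_{21}^*$, is precisely $-A_S u$.
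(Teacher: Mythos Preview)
Your proof is correct and follows essentially the same route as the paper's: verify the closed-form axioms for $(a,\dom(A_{21}))$ using the properties of $A_{21}$ and $A_{11}$ supplied by Proposition~\ref{prop:skew-hermitian-equiv}, then identify the associated operator with $-A_S$ via the formula $A_S = A_{11} - A_{21}^* S A_{21}$ from that same proposition. The only cosmetic difference is that for ellipticity the paper appeals to boundedness of $A_{11}$ (absorbing $\lvert\langle A_{11}u,u\rangle\rvert \le \norm{A_{11}}\norm{u}_{H_1}^2$ into the shift $\omega$), whereas you use the sharper fact that $A_{11}$ is dissipative; both are available under the hypotheses and lead to the same conclusion.
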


\begin{proof}
    We first show that $\big(a, \dom(A_{21})\big)$ is a closed form on $H_1$:
    
    The graph norm $\norm{\argument}_{A_{21}}$ on $\dom(A_{21})$ clearly stems from an inner product; 
    it is also complete since $A_{21}$ is closed according to Proposition~\ref{prop:skew-hermitian-equiv}.
    The same proposition shows that $A_{21}$ is densely defined on $H_1$, so the form domain $\dom(A_{21})$ of $a$ is indeed dense in $H_1$.
    Continuity of the embedding of $\big(\dom(A_{21}), \norm{\argument}_{A_{21}}\big)$ into $(H_1, \norm{\argument}_{H_1})$ is obvious from the definition of the graph norm $\norm{\argument}_{A_{21}}$. 
    Continuity of $a$ with respect to the norm $\norm{\argument}_{A_{21}}$ is immediate.
    Ellipticity of the form follows from the coercivity assumption on $S$ (see the beginning of Subsection~\ref{subsection:the-operators}) and the continuity of $A_{11}$.
    
    Now let $A: \dom(A) \subseteq H_1 \to H_1$ denote the operator that is associated to the closed and elliptic form $\big(a, \dom(A_{21})\big)$.  
    By using the form of the operator $A_S$ given in Proposition~\ref{prop:skew-hermitian-equiv} along with the very definition of $A$, one can check via a straightforward computation that, indeed, $A = -A_S$.
    So in particular, the semigroup generated by $A_S$ is analytic.
\end{proof}

Another condition for $A_S$ to generate an analytic semigroup is (even in the setting of Banach spaces) given in \cite[Theorem~2.2]{SchwenningerZwart2014}. 
That result even gives that the angle of analyticity is $\frac{\pi}{2}$. 
To check the assumption of that theorem one needs multiplicative perturbation theory of $C_0$-groups. 
In contrast, Theorem~\ref{thm:via-forms} yields analyticity in the general situation of Proposition~\ref{prop:skew-hermitian-equiv}, without any additional assumption. 
In this situation it can happen that the angle of analyticity of $(e^{tA_S})_{t \ge 0}$ is strictly less than $\frac{\pi}{2}$, as the following simple example shows:

\begin{example}
    \label{exa:analytic-smaller-angle}
    Let $H_1 = H_2 = \ell^2$ and consider the skew-adjoint operator $A_{21} = A_{12}$ on $\ell^2$ that is given by multiplication with the unbounded sequence $(\ui n)_{n \in \bbN}$ and is endowed with its maximal domain $\dom(A_{21})$. 
    Moreover, we let $A_{11} = 0$. 
    Finally, set $S \coloneqq (1+\ui) \id_{\ell^2}$. 

    Then we are in the setting described in Subsection~\ref{subsection:the-operators} and the splitting assumption~\ref{ass:splitting} is satisfied. 
    According to Proposition~\ref{prop:skew-hermitian-equiv} the operator $A_\ext$ generates a contractives $C_0$-semigroup on $\ell^2 \times \ell^2$. 
    All assumptions of Theorem~\ref{thm:via-forms} are thus satisfied.

    The operator $A_S$ is given by multiplication with the sequence $\big(-(1+\ui)n^2\big)_{n \in \bbN}$ (and has the maximal domain of this multiplication operator). 
    Hence, the angle of analyticity of $(e^{tA_S})_{t \ge 0}$ is strictly less than $\frac{\pi}{2}$.
\end{example}

Note, however, that if the operator $S$ is self-adjoint, then it follows that $A_S$ is also self-adjoint and hence generates an analytic $C_0$-semigroup of angle $\frac{\pi}{2}$.

\section{Spectrum and stability of $A_S$}
\label{sec:inheritance-of-stability}

In this section we prove a variety of results about the inheritance of spectral and stability properties from $A_\ext$ to $A_S$.

\subsection{Inheritance of spectral properties}

The purpose of this subsection is to analyse how purely imaginary spectral values of $A_\ext$ and $A_S$ are related.
All our results to this end are based on the following lemma.
\begin{lemma}
    \label{lem:approx-sequence}
    Assume that $A_\ext$ is dissipative. 
    Let $(\ui\omega_n) \subseteq \ui\bbR$ and $(v_n) \subseteq \dom(A_S)$ be sequences such that $\norm{v_n}_{H_1} = 1$ for all $n$ and
    \begin{align*}
        (\ui\omega_n - A_S) v_n \to 0
        \qquad \text{in } H_1.
    \end{align*}
    Then $A_{21} v_n \to 0$ in $H_2$.
\end{lemma}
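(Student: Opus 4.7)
The plan is to probe the dissipativity of $A_\ext$ by testing it against the lifted vectors
\[
    w_n := \begin{pmatrix} v_n \\ S A_{21} v_n \end{pmatrix}.
\]
These vectors lie in $\dom(A_\ext)$ precisely because $v_n \in \dom(A_S)$, which is the principal reason for choosing this particular lift rather than, say, $(v_n, 0)$.

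By Remark~\ref{rem:A_ext-vs-A_S}, one has $A_\ext w_n = \bigl( A_S v_n,\, A_{21} v_n \bigr)^\top$. Forming the $H_1 \times H_2$-inner product with $w_n$ and taking real parts, dissipativity of $A_\ext$ gives
\[
    \re \inner{A_S v_n}{v_n}_{H_1} + \re \inner{A_{21} v_n}{S A_{21} v_n}_{H_2} \le 0.
\]
The coercivity of $S$, applied to $h_2 = A_{21} v_n$, bounds the second summand from below by $\nu \norm{A_{21} v_n}_{H_2}^2$, and hence
\[
    \nu \norm{A_{21} v_n}_{H_2}^2 \le - \re \inner{A_S v_n}{v_n}_{H_1}.
\]

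It then remains to argue that the right-hand side tends to zero. Setting $r_n := (\ui\omega_n - A_S) v_n$, we have $r_n \to 0$ by hypothesis and $A_S v_n = \ui\omega_n v_n - r_n$; using $\norm{v_n}_{H_1} = 1$ this yields
\[
    \re \inner{A_S v_n}{v_n}_{H_1}
    = \re(\ui\omega_n) - \re \inner{r_n}{v_n}_{H_1}
    = - \re \inner{r_n}{v_n}_{H_1},
\]
which converges to $0$ by the Cauchy--Schwarz inequality. Combining this with the previous display gives $A_{21} v_n \to 0$ in $H_2$, as claimed.

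The only step requiring a small piece of insight is the choice of test vector $w_n$; after that, the argument is a straightforward assembly of dissipativity, coercivity of $S$, and the elementary fact that the purely imaginary contribution $\ui\omega_n \norm{v_n}^2$ drops out when taking the real part. I do not anticipate any further obstacle.
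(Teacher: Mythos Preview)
Your proof is correct and shares the same starting idea as the paper's: both lift $v_n$ to $w_n = (v_n,\, S A_{21} v_n)^\top \in \dom(A_\ext)$ and exploit dissipativity of $A_\ext$ together with coercivity of $S$.

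The execution, however, differs in a way that is worth pointing out. The paper forms the vector $\bigl(\begin{smallmatrix} \ui\omega_n & 0 \\ 0 & S^{-1} \end{smallmatrix}\bigr) w_n - A_\ext w_n$, which equals $\bigl((\ui\omega_n - A_S)v_n,\, 0\bigr)^\top \to 0$, and then tests this against the \emph{normalized} vector $w_n/\norm{w_n}$. Since $\norm{w_n}$ is not a priori bounded, this only yields $\norm{A_{21}v_n}^2/\norm{w_n}^2 \to 0$, and a separate real-analysis argument involving the auxiliary function $x \mapsto x/(1+\norm{S}^2 x)$ is needed to conclude that $\norm{A_{21}v_n} \to 0$. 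Your route bypasses this entirely: by testing dissipativity directly on the unnormalized $w_n$ you obtain the clean inequality $\nu\norm{A_{21}v_n}^2 \le -\re\inner{A_S v_n}{v_n}$, and the observation that $\re(\ui\omega_n\norm{v_n}^2)=0$ immediately reduces the right-hand side to $\re\inner{r_n}{v_n} \to 0$. This is shorter and avoids both the normalization issue and the auxiliary function argument.
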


\begin{proof}
    First note that $\dom(A_S) \subseteq \dom(A_{21})$ by the definition of $A_S$, so one can indeed apply $A_{21}$ to $v_n$.
    Define the vectors 
    \begin{align*}
        z_n := 
        \begin{pmatrix}
            v_n          \\ 
            S A_{21} v_n
        \end{pmatrix}
        \in \dom(A_\ext) \subseteq \dom(A_1).
    \end{align*}
    Their norms satisfy $\norm{z_n}_{H_1 \times H_2}^2 = 1 + \norm{S A_{21} v_n}_{H_2}^2$. 
    Moreover, we have 
    \begin{align*}
        \left(
            \begin{pmatrix}
                \ui \omega_n & 0      \\ 
                0            & S^{-1} 
            \end{pmatrix}
            -
            A_\ext
        \right)
        z_n
        = 
        \begin{pmatrix}
            (\ui\omega_n - A_S) v_n \\ 
            0
        \end{pmatrix}
        \to 
        0 
        \quad \text{in } H_1 \times H_2.
    \end{align*}
    By multiplying this with $1/\norm{z_n}_{H_1 \times H_2} \le 1$, testing against the bounded sequence $\big( z_n / \norm{z_n}_{H_1 \times H_2} \big)$ from the left, and taking real parts, we obtain
    \begin{align*}
        \frac{1}{\norm{z_n}_{H_1 \times H_2}^2} 
        \Big(
            \underbrace{
                \re \inner{S A_{21} v_n}{ A_{21} v_n}_{H_2}
            }_
            {
                \ge \nu \norm{A_{21} v_n}_{H_2}^2
            }
            + 
            \underbrace{
                \re \inner{z_n}{-A_\ext z_n}_{H_1 \times H_2}
            }_ 
            {\ge 0}
        \Big)
        \to   0,
    \end{align*}
    where the second real part is $\ge 0$ since $A_\ext$ is dissipative according to the assumptions of the lemma. 
    Thus, 
    \begin{align*}
        0 
        \leftarrow 
        \frac{\norm{A_{21} v_n}_{H_2}^2}{\norm{z_n}_{H_1 \times H_2}^2} 
        \ge 
        \frac{
            \norm{A_{21} v_n}_{H_2}^2
        }{  
            1 + \norm{S}^2 \norm{A_{21} v_n}_{H_2}^2
        }. 
    \end{align*} 
    This implies that $\norm{A_{21} v_n}_{H_2}$ converges to $0$, since the continuous function 
    \begin{align*}
        [0, \infty) \to     [0,\infty),
        \qquad 
        x           \mapsto \frac{x}{1 + \norm{S}^2 x}
    \end{align*}
    vanishes only at $x = 0$ and converges to $1/\norm{S}^2 \not= 0$ as $x \to \infty$.
\end{proof}

Now we analyze the eigenvalues of $A_S$ and $A_\ext$ on the imaginary axis; 
general spectral values are treated in Theorem~\ref{thm:spectrum}.
\begin{theorem}[Peripheral point spectrum of $A_\ext$ and $A_S$]
    \label{thm:eigenvalues}
    Assume that $A_\ext$ is dissipative. 
    Then the following assertions are equivalent for $\omega \in {\mathbb R}$ and $v \in H_1$:
    \begin{enumerate}[label=\upshape(\roman*)]
        \item\label{thm:eigenvalues:itm:A_S} 
        $v \in \dom(A_S)$ and $A_S v = \ui\omega v$; 
        
        \item\label{thm:eigenvalues:itm:A_ext}  
        $
            \begin{pmatrix}
                v \\ 
                0
            \end{pmatrix}
            \in \dom(A_\ext)
        $
        and
        $
            A_\ext 
            \begin{pmatrix}
                v \\ 
                0
            \end{pmatrix}
            = 
            \ui\omega 
            \begin{pmatrix}
                v \\ 
                0
            \end{pmatrix}
        $;
        \item\label{thm:eigenvalues:itm:componentwise} 
        $v \in \dom(A_{21})$ and $A_{21} v = 0$, 
        as well as 
        $
            \begin{pmatrix}
                v \\ 
                0
            \end{pmatrix}
            \in \dom(A_1)
        $
        and 
        $
            A_1
            \begin{pmatrix}
                v \\ 
                0
            \end{pmatrix}
            = 
            \ui\omega v 
        $.
    \end{enumerate}
    In particular, the point spectra of $A_S$ and $A_\ext$ satisfy
    \begin{equation*}
        \specPnt(A_S) \cap \ui\bbR 
        \subseteq 
        \specPnt(A_\ext) \cap \ui\bbR.
    \end{equation*}
\end{theorem}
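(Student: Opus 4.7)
The plan is to show $\ref{thm:eigenvalues:itm:A_ext} \Leftrightarrow \ref{thm:eigenvalues:itm:componentwise}$ essentially by unpacking definitions, $\ref{thm:eigenvalues:itm:componentwise} \Rightarrow \ref{thm:eigenvalues:itm:A_S}$ by direct verification, and finally to use Lemma~\ref{lem:approx-sequence} to obtain $\ref{thm:eigenvalues:itm:A_S} \Rightarrow \ref{thm:eigenvalues:itm:componentwise}$, which will be the only non-trivial step.

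\textbf{Equivalence of \ref{thm:eigenvalues:itm:A_ext} and \ref{thm:eigenvalues:itm:componentwise}.}
I would simply recall from Definition~\ref{def:operators}\ref{def:operators:itm:A_ext} that $(v,0) \in \dom(A_\ext)$ precisely means $(v,0) \in \dom(A_1)$ together with $v \in \dom(A_{21})$, and that $A_\ext (v,0) = \bigl(A_1(v,0), A_{21} v\bigr)$. The eigenvalue identity in \ref{thm:eigenvalues:itm:A_ext} therefore decouples into the two identities in \ref{thm:eigenvalues:itm:componentwise}.

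\textbf{From \ref{thm:eigenvalues:itm:componentwise} to \ref{thm:eigenvalues:itm:A_S}.}
If $A_{21} v = 0$ then $S A_{21} v = 0$, so $(v, S A_{21} v) = (v,0) \in \dom(A_1)$ by assumption, and together with $v \in \dom(A_{21})$ this yields $v \in \dom(A_S)$ via Definition~\ref{def:operators}\ref{def:operators:itm:A_S}. Then $A_S v = A_1(v, S A_{21} v) = A_1(v,0) = \ui\omega v$.

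\textbf{From \ref{thm:eigenvalues:itm:A_S} to \ref{thm:eigenvalues:itm:componentwise} (the heart of the argument).}
The case $v = 0$ is trivial, so assume $v \neq 0$. The idea is to feed the constant sequence $v_n \coloneqq v/\norm{v}_{H_1}$ and $\omega_n \coloneqq \omega$ into Lemma~\ref{lem:approx-sequence}: indeed $\norm{v_n}_{H_1} = 1$ and $(\ui\omega_n - A_S) v_n = 0$ for all $n$, so the lemma yields $A_{21} v_n \to 0$, i.e.\ $A_{21} v = 0$. With this in hand, $S A_{21} v = 0$, and the identity~\eqref{eq:A_ext-vs-A_S} in Remark~\ref{rem:A_ext-vs-A_S} reads
\begin{equation*}
    A_\ext \begin{pmatrix} v \\ 0 \end{pmatrix}
    = A_\ext \begin{pmatrix} v \\ S A_{21} v \end{pmatrix}
    = \begin{pmatrix} A_S v \\ A_{21} v \end{pmatrix}
    = \begin{pmatrix} \ui\omega v \\ 0 \end{pmatrix},
\end{equation*}
which is \ref{thm:eigenvalues:itm:A_ext}, and hence \ref{thm:eigenvalues:itm:componentwise} by the first paragraph.

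\textbf{The point spectrum inclusion.} For the final ``in particular'' claim, note that $v \neq 0$ is equivalent to $(v,0) \neq 0$, so any eigenvector of $A_S$ with purely imaginary eigenvalue produces, via \impliesProof{thm:eigenvalues:itm:A_S}{thm:eigenvalues:itm:A_ext}, a non-zero eigenvector of $A_\ext$ for the same eigenvalue. The main obstacle is really just the implication \impliesProof{thm:eigenvalues:itm:A_S}{thm:eigenvalues:itm:componentwise}: without the dissipativity-driven estimate packaged in Lemma~\ref{lem:approx-sequence}, there is no direct route from a bare eigenrelation for $A_S$ to the conclusion $A_{21} v = 0$.
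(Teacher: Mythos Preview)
Your proof is correct and follows essentially the same approach as the paper: the key step \impliesProof{thm:eigenvalues:itm:A_S}{thm:eigenvalues:itm:A_ext} is obtained by applying Lemma~\ref{lem:approx-sequence} to the constant sequences $(\ui\omega)$ and $(v/\norm{v}_{H_1})$ to force $A_{21}v=0$, after which Remark~\ref{rem:A_ext-vs-A_S} yields the eigenvalue equation for $A_\ext$, while the remaining implications are unwound directly from the definitions.
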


\begin{proof}
    \impliesProof{thm:eigenvalues:itm:A_S}{thm:eigenvalues:itm:A_ext}
    This implication is trivial if $v = 0$, so we may, and shall, assume that $\norm{v}_{H_1} = 1$.
    If we apply Lemma~\ref{lem:approx-sequence} to the constant sequences $(\ui\omega)$ and $(v)$ we obtain $A_{21} v = 0$, 
    and thus
    $
        \begin{pmatrix}
            v \\ 
            0
        \end{pmatrix}
        = 
        \begin{pmatrix}
            v          \\ 
            S A_{21} v
        \end{pmatrix}
        \in 
        \dom(A_{\ext})
    $
    and 
    \begin{align*}
        A_\ext 
        \begin{pmatrix}
            v \\ 
            0
        \end{pmatrix}
        = 
        A_\ext 
        \begin{pmatrix}
            v          \\ 
            S A_{21} v
        \end{pmatrix}
        = 
        \begin{pmatrix}
            A_S v    \\ 
            A_{21} v
        \end{pmatrix}
        = 
        \ui\omega
        \begin{pmatrix}
            v \\ 
            0
        \end{pmatrix} ,
    \end{align*}
    where the second equality follows from Remark~\ref{rem:A_ext-vs-A_S}. 
    
    \impliesProof{thm:eigenvalues:itm:A_ext}{thm:eigenvalues:itm:componentwise} 
    and 
    \impliesProof{thm:eigenvalues:itm:componentwise}{thm:eigenvalues:itm:A_S}
    These implications easily follow from the definitions of $A_\ext$ and $A_S$ and from Remark~\ref{rem:A_ext-vs-A_S}.
\end{proof}

\begin{corollary}
    \label{cor:non-eigenvalues}
    If $A_\ext$ is dissipative and $A_{21}$ is injective, then $A_S$ does not have any eigenvalues in $\ui\bbR$.
\end{corollary}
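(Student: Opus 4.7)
The plan is to deduce this immediately from Theorem~\ref{thm:eigenvalues}. Suppose, for contradiction, that $A_S$ has an eigenvalue $\ui\omega \in \ui\bbR$ with corresponding eigenvector $v \in \dom(A_S)$, so in particular $v \neq 0$.

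Applying the implication \impliesProof{thm:eigenvalues:itm:A_S}{thm:eigenvalues:itm:componentwise} of Theorem~\ref{thm:eigenvalues} (which uses the dissipativity of $A_\ext$), I obtain $v \in \dom(A_{21})$ together with $A_{21} v = 0$. The injectivity of $A_{21}$ then forces $v = 0$, contradicting our assumption. Hence $\specPnt(A_S) \cap \ui\bbR = \emptyset$.

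There is no real obstacle here since all the work has already been done in Theorem~\ref{thm:eigenvalues}; the corollary is essentially just the observation that the implication \ref{thm:eigenvalues:itm:A_S} $\Rightarrow$ \ref{thm:eigenvalues:itm:componentwise} provides the key compatibility condition $A_{21} v = 0$ that directly conflicts with injectivity.
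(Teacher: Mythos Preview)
Your proof is correct and matches the paper's approach: the corollary is stated without proof in the paper, as it follows immediately from the equivalence in Theorem~\ref{thm:eigenvalues} exactly as you outline.
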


\begin{corollary}
    \label{cor:splitting-eigenvalues}
    If $A_\ext$ is dissipative and the splitting assumption~\ref{ass:splitting} is satisfied, then
    \begin{align*}
        \ker(\ui \omega - A_S) = \ker A_{21} \cap \ker (\ui \omega - A_{11})
    \end{align*}
    for every $\omega \in \bbR$.
\end{corollary}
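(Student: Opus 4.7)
The plan is to derive the equality directly from the equivalence \ref{thm:eigenvalues:itm:A_S}$\Leftrightarrow$\ref{thm:eigenvalues:itm:componentwise} in Theorem~\ref{thm:eigenvalues}, specialized to the splitting setting where $A_1$ decomposes as $(A_{11}\ A_{12})$ on the product domain $\dom(A_{11})\times\dom(A_{12})$.

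Fix $\omega \in \bbR$ and let $v \in H_1$. By Theorem~\ref{thm:eigenvalues}, $v$ lies in $\ker(\ui\omega - A_S)$ if and only if $v \in \dom(A_{21})$ with $A_{21} v = 0$, and simultaneously $\begin{psmallmatrix} v \\ 0 \end{psmallmatrix} \in \dom(A_1)$ with $A_1 \begin{psmallmatrix} v \\ 0 \end{psmallmatrix} = \ui\omega v$. The splitting assumption~\ref{ass:splitting} (see also Proposition~\ref{prop:splitting-repr}\ref{prop:splitting-repr:itm:A_ext}) means $\dom(A_1) = \dom(A_{11})\times\dom(A_{12})$; since $0 \in \dom(A_{12})$ automatically, the membership $\begin{psmallmatrix} v \\ 0 \end{psmallmatrix}\in\dom(A_1)$ reduces to $v \in \dom(A_{11})$, and then $A_1\begin{psmallmatrix} v \\ 0 \end{psmallmatrix} = A_{11} v + A_{12}\cdot 0 = A_{11} v$.

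Combining these observations, the condition $A_S v = \ui\omega v$ becomes $v \in \dom(A_{11})\cap\dom(A_{21})$ together with $A_{21} v = 0$ and $A_{11} v = \ui\omega v$, which is precisely the statement $v \in \ker A_{21} \cap \ker(\ui\omega - A_{11})$. Hence the two sets coincide.

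I do not anticipate any substantive obstacle: the proof is just a bookkeeping step that combines Theorem~\ref{thm:eigenvalues} with the splitting structure of $\dom(A_1)$. The only minor subtlety is to note that $A_{12}\cdot 0 = 0$ regardless of any properties of $A_{12}$, which is what allows the component condition on $A_1$ to collapse entirely to an eigenvalue equation for $A_{11}$.
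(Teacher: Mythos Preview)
Your proof is correct and follows exactly the approach indicated in the paper: the paper's own proof is a one-line remark that the claim follows from the equivalence of~\ref{thm:eigenvalues:itm:A_S} and~\ref{thm:eigenvalues:itm:componentwise} in Theorem~\ref{thm:eigenvalues} together with the splitting assumption~\ref{ass:splitting}, and you have simply spelled out those details explicitly.
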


\begin{proof}
    This is a consequence of the characterization of eigenvalues of $A_S$ in Theorem~\ref{thm:eigenvalues}\ref{thm:eigenvalues:itm:A_S} and~\ref{thm:eigenvalues:itm:componentwise} and of the splitting assumption~\ref{ass:splitting}.
\end{proof}

Under the assumptions of Theorem~\ref{thm:via-forms} -- which are bit stronger than those of Corollary~\ref{cor:splitting-eigenvalues} -- one can give an alternative proof of Corollary~\ref{cor:splitting-eigenvalues} that relies on the form $\big(a, \dom(A_{21})\big)$ from Theorem~\ref{thm:via-forms}. 
It seems worthwhile to write down this argument explicitly.

\begin{proof}[Alternative proof of Corollary~\ref{cor:splitting-eigenvalues} under the assumptions of Theorem~\ref{thm:via-forms}]
    We use the formula for $A_S$ from Proposition~\ref{prop:skew-hermitian-equiv} and the form $a: \dom(A_{21}) \times \dom(A_{21}) \to \bbC$ on $H_1$ from Theorem~\ref{thm:via-forms}.
    
    ``$\supseteq$''
    Let $u \in \ker A_{21} \cap \ker(\ui \omega - A_{11})$.
    Then $a(u,v) = - \inner{\ui \omega u}{v}$ for each $v \in \dom(A_{21})$, 
    so $u \in \dom(A_S)$ and $A_S u = \ui \omega u$.

    ``$\subseteq$''
    Let $u \in \ker(\ui\omega - A_S)$. 
    Then $u \in \dom(A_S)$, so $u \in \dom(A_{21})$ and $S A_{21} u \in \dom(A_{21}^*)$.
    By testing the equality $-A_S u = -\ui \omega u$ against $u$ we obtain
    \begin{align*}
        -\ui \omega \norm{u}_{H_1}^2 
        = 
        -\inner{A_S u}{u}_{H_1}
        =
        a(u,u) 
        = 
        \inner{S A_{21} u}{A_{21} u} - \inner{A_{11}u}{u}_{H_1}
        .
    \end{align*}
    By taking the real part and using the dissipativity of $A_{11}$ (which follows from Proposition~\ref{prop:skew-hermitian-equiv}) and the coercivity assumption on $S$ 
    (see the beginning of Subsection~\ref{subsection:the-operators}) we then get
    $
        0 
        \ge 
        \nu \norm{A_{21} u}_{H_2}^2
    $
    and hence, $u \in \ker A_{21}$.
    Thus, 
    \begin{align*}
        \ui \omega u 
        = 
        A_S u 
        = 
        A_{11} u - A_{21^*} S A_{12} u 
        = 
        A_{11} u,
    \end{align*}
    so also $u \in \ker(\ui \omega - A_{11})$.
\end{proof}

This proof is essentially an abstract version of the argument used in \cite[Prop.~3.6]{DoGl22} to study the imaginary point spectrum of systems of heat equations coupled by a matrix-valued potential. 
The proof allows an approach to studying the imaginary eigenvalues of $A_S$ that does not rely on $A_\ext$: 
instead of defining and working with $A_\ext$ one can start with the operators $A_{21}$, $A_{11}$, and $S$, define the form $\big(a, \dom(A_{21})\big)$ from those operators, and show -- under appropriate assumptions on the involved operators -- that the form is closed and elliptic and that the associated operator is $-A_S$, where $A_S$ is given as in Proposition~\ref{prop:skew-hermitian-equiv}.
Then the above proof can be applied without reference to $A_\ext$.

Let us explain in more detail what, intuitively speaking, the point of Corollary~\ref{cor:splitting-eigenvalues} is. 
Due to the splitting assumption that we require in Corollary~\ref{cor:splitting-eigenvalues}, the operator $A_S$ acts, according to Proposition~\ref{prop:splitting-repr}\ref{prop:splitting-repr:itm:A_S}, as $A_S h_1 = A_{11} h_1 + A_{12} S A_{21} h_1$ on $\dom(A_S)$. 
So Corollary~\ref{cor:splitting-eigenvalues} tells us, in particular, that adding the operator $A_{12}SA_{21}$ to $A_{11}$ cannot produce imaginary eigenvalues that are not already present for $A_{11}$.
This is due to the special structure of the operator $A_{12} S A_{21}$ and due to the assumptions that we impose on these operators (by assuming that $A_\ext$ is dissipative). 
In general, adding two dissipative operators can generate imaginary eigenvalues that are not present for any of the single operators, as the following simple three dimensional example illustrates.

\begin{example}
    \label{exa:turing-instab}
    Consider the matrices 
    \begin{align*}
        A 
        \coloneqq
        \begin{pmatrix}
            0  & -1 & 1 \\ 
            1  & 0  & 0 \\ 
            -1 & 0  & -1
        \end{pmatrix}
        \qquad \text{and} \qquad 
        B 
        \coloneqq 
        \begin{pmatrix}
            0  & 2 & -1 \\ 
            -2 & 0 &  0 \\ 
            1  & 0 & -1
        \end{pmatrix}
    \end{align*}
    in $\bbR^{3 \times 3}$. 
    Both matrices are dissipative and a brief numerical computation shows that every eigenvalue of both matrices has strictly negative real part. 
    However, the sum
    \begin{align*}
        A + B 
        = 
        \begin{pmatrix}
             0 & 1 & 0 \\ 
            -1 & 0 & 0 \\ 
             0 & 0 & -2
        \end{pmatrix}
    \end{align*}
    has the spectrum $\{-2,\ui, -\ui\}$, i.e., two of its eigenvalues are located on the imaginary axis.
\end{example}

The existence of purely imaginary eigenvalues for the sum of two operators, despite both operators having eigenvalues with strictly negative real part only, is sometimes referred to as \emph{Turing instability} in reference to the more general phenomenon that adding two differential equations can destroy stability properties of the individual systems.  
So Corollary~\ref{cor:splitting-eigenvalues} shows that Turing instability cannot occur when adding $A_{11}$ and $A_{12} S A_{21}$.

A similar result as in Theorem~\ref{thm:eigenvalues} can  be proved for spectral values rather than eigenvalues; 
this is our next theorem.
\begin{theorem}[Peripheral spectrum of $A_\ext$ and $A_S$]
    \label{thm:spectrum}
    If $A_\ext$ generates a contraction semigroup on $H_1 \times H_2$, then
    \begin{align*}
        \spec(A_S) \cap \ui\bbR 
        \subseteq 
        \spec(A_\ext) \cap \ui\bbR
        .
    \end{align*}
\end{theorem}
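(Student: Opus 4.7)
The plan is to lift an approximate eigensequence from $A_S$ to $A_\ext$, following the same pattern as in the eigenvalue case in Theorem~\ref{thm:eigenvalues} but for approximate eigenvalues.

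Since $A_\ext$ generates a contraction semigroup, Theorem~\ref{thm:inheritance-of-domination} yields that $A_S$ does as well. Consequently, both $\spec(A_\ext)$ and $\spec(A_S)$ lie in the closed left half-plane, so any purely imaginary spectral value $\ui\omega \in \spec(A_S)$ lies on the topological boundary of $\spec(A_S)$. A standard Neumann series argument guarantees that boundary spectrum of a closed operator consists of approximate eigenvalues, so there exists a sequence $(v_n) \subseteq \dom(A_S)$ with $\norm{v_n}_{H_1} = 1$ and $(\ui\omega - A_S) v_n \to 0$ in $H_1$.

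At this point I would invoke Lemma~\ref{lem:approx-sequence} --- whose dissipativity hypothesis is satisfied since $A_\ext$ generates a contraction semigroup --- to conclude $A_{21} v_n \to 0$ in $H_2$. I then lift the sequence to $H_1 \times H_2$ by setting
\begin{align*}
    z_n \coloneqq \begin{pmatrix} v_n \\ S A_{21} v_n \end{pmatrix} \in \dom(A_\ext).
\end{align*}
Because $S$ is bounded and $A_{21} v_n \to 0$, we have $\norm{z_n}_{H_1 \times H_2}^2 = 1 + \norm{S A_{21} v_n}_{H_2}^2 \to 1$, so the lifted sequence is, in particular, bounded away from zero. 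Using Remark~\ref{rem:A_ext-vs-A_S} one computes
\begin{align*}
    (\ui\omega - A_\ext) z_n
    =
    \begin{pmatrix} (\ui\omega - A_S) v_n \\ \ui\omega S A_{21} v_n - A_{21} v_n \end{pmatrix},
\end{align*}
and both components tend to $0$ in their respective spaces: the first by construction of $(v_n)$, the second because $A_{21} v_n \to 0$ and $S$ is bounded. Hence $\ui\omega$ is an approximate eigenvalue of $A_\ext$, so $\ui\omega \in \spec(A_\ext)$.

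The main obstacle is to prevent the lifted sequence $(z_n)$ from degenerating, i.e., to keep $\norm{z_n}_{H_1 \times H_2}$ bounded away from zero; this is precisely what the convergence $A_{21} v_n \to 0$ supplied by Lemma~\ref{lem:approx-sequence} achieves. The only other small point that needs justification is the passage from spectrum to approximate point spectrum on the boundary, which is classical for closed operators via a resolvent blow-up argument. Once these two ingredients are in place, the computation is entirely routine bookkeeping in the product Hilbert space.
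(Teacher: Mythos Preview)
Your proof is correct and follows essentially the same route as the paper's: pass from boundary spectrum to an approximate eigensequence for $A_S$, use Lemma~\ref{lem:approx-sequence} to get $A_{21}v_n \to 0$, and lift via $z_n = \begin{psmallmatrix} v_n \\ S A_{21} v_n \end{psmallmatrix}$ to an approximate eigensequence for $A_\ext$ using Remark~\ref{rem:A_ext-vs-A_S}. The only cosmetic difference is that the paper simply notes $\norm{z_n}_{H_1\times H_2} \ge 1$ rather than computing the limit of the norms.
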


\begin{proof}
    Let $\ui \omega \in \ui\bbR$ be a spectral value of $A_S$.
    Since $A_\ext$ generates a contraction semigroup, so does $A_S$ according to Theorem~\ref{thm:inheritance-of-domination}. 
    Hence, $\ui\omega$ is in the topological boundary of $\spec(A_S)$ and is thus an approximate eigenvalue of $A_S$ \cite[Proposition~IV.1.10 on p.\,242]{EngelNagel2000}. 
    So there exists a sequence $(v_n) \subseteq \dom(A_S)$ such that $\norm{v_n}_{H_1} = 1$ for each $n$ and such that
    \begin{align*}
        (\ui\omega - A_S) v_n \to 0.
    \end{align*}
    We can now apply Lemma~\ref{lem:approx-sequence} to the constant sequence $(\ui \omega)$ and to the sequence $(v_n)$ and thus obtain $A_{21} v_n \to 0$. 
    Consequently
    \begin{align*}
        (\ui\omega - A_\ext)
        \begin{pmatrix}
            v_n          \\ 
            S A_{21} v_n
        \end{pmatrix}
        = 
        \begin{pmatrix}
            (\ui\omega - A_S) v_n                \\ 
            \ui \omega S A_{21} v_n - A_{21} v_n 
        \end{pmatrix}
        \to 0,
    \end{align*}
    where the equality follows from Remark~\ref{rem:A_ext-vs-A_S}
    and the convergence of the second component is due to $A_{21}v_n \to 0$ together with the boundedness of the operator $S$. 
    Since 
    \begin{align*}
        \norm{
            \begin{pmatrix}
                v_n          \\ 
                S A_{21} v_n
            \end{pmatrix}
        }_
        {H_1 \times H_2}
        \ge 1
    \end{align*}
    for all $n$, this implies that $\ui\omega$ is also an approximate eigenvalue, and thus a spectral value, of $A_\ext$.
\end{proof}

The following proposition uses ideas of the proof of \cite[Theorem~2.2]{ZwGoMa16} to show that having a compact resolvent is transmitted from $A_\ext$ to $A_S$.
 \begin{proposition}\label{prop: compact-resolvent}
    Let $A_\ext$ be the generator of a contraction semigroup and have a compact resolvent, then $A_S$ has compact resolvent as well.
\end{proposition}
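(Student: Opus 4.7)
The plan is to show that for some (equivalently, any) $\lambda$ in the intersection of the resolvent sets of $A_\ext$ and $A_S$, the resolvent $(\lambda - A_S)^{-1}: H_1 \to H_1$ is compact. Since $A_\ext$ generates a contraction semigroup and, by Theorem~\ref{thm:inheritance-of-domination}, so does $A_S$, every $\lambda > 0$ lies in $\rho(A_\ext) \cap \rho(A_S)$ and the resolvents are bounded by $1/\lambda$. Fix such a $\lambda$.

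Given a bounded sequence $(f_n) \subseteq H_1$, set $h_n := (\lambda - A_S)^{-1} f_n$ and $z_n := \begin{psmallmatrix} h_n \\ S A_{21} h_n \end{psmallmatrix} \in \dom(A_\ext)$. By Remark~\ref{rem:A_ext-vs-A_S},
\begin{align*}
    (\lambda - A_\ext) z_n
    =
    \begin{pmatrix} \lambda h_n - A_S h_n \\ \lambda S A_{21} h_n - A_{21} h_n \end{pmatrix}
    =
    \begin{pmatrix} f_n \\ (\lambda S - I) A_{21} h_n \end{pmatrix}.
\end{align*}
The idea is to show that the right-hand side is bounded in $H_1 \times H_2$ so that compactness of $(\lambda - A_\ext)^{-1}$ can be applied; the only non-obvious point is boundedness of $(A_{21} h_n)$ in $H_2$.

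To obtain this a priori bound I would use dissipativity of $A_\ext$ combined with coercivity of $S$. Using Remark~\ref{rem:A_ext-vs-A_S} once more,
\begin{align*}
    0
    \ge
    \re \inner{A_\ext z_n}{z_n}_{H_1 \times H_2}
    =
    \re \inner{A_S h_n}{h_n}_{H_1} + \re \inner{A_{21} h_n}{S A_{21} h_n}_{H_2}
    \ge
    \re \inner{A_S h_n}{h_n}_{H_1} + \nu \norm{A_{21} h_n}_{H_2}^2.
\end{align*}
Rearranging and substituting $A_S h_n = \lambda h_n - f_n$ yields
\begin{align*}
    \nu \norm{A_{21} h_n}_{H_2}^2
    \le
    -\re \inner{A_S h_n}{h_n}_{H_1}
    =
    -\lambda \norm{h_n}_{H_1}^2 + \re \inner{f_n}{h_n}_{H_1}
    \le
    \norm{f_n}_{H_1} \norm{h_n}_{H_1}
    \le
    \tfrac{1}{\lambda} \norm{f_n}_{H_1}^2,
\end{align*}
where the last step uses $\norm{h_n}_{H_1} \le \tfrac{1}{\lambda}\norm{f_n}_{H_1}$. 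Hence $(A_{21} h_n)$ is bounded.

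Consequently, the right-hand side of the displayed equation for $(\lambda - A_\ext) z_n$ is bounded in $H_1 \times H_2$, and compactness of $(\lambda - A_\ext)^{-1}$ produces a subsequence $(z_{n_k})$ that converges in $H_1 \times H_2$. In particular, the first component $(h_{n_k})$ converges in $H_1$, which proves that $(\lambda - A_S)^{-1}$ is compact. The argument contains no significant obstacle; the one place where care is required is the circular appearance of $A_{21} h_n$ in the right-hand side, which is broken precisely by the dissipativity-plus-coercivity estimate above.
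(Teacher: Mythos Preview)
Your proof is correct, but it proceeds by a genuinely different route than the paper's.

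The paper introduces the bounded perturbation
\[
    P_\lambda =
    \begin{pmatrix}
        0 & 0 \\
        0 & -S^{-1} + \lambda \id
    \end{pmatrix}
\]
and uses the second resolvent identity to conclude that $A_\ext + P_\lambda$ has compact resolvent whenever $A_\ext$ does. It then observes that the equation $(\lambda - (A_\ext + P_\lambda))\begin{psmallmatrix} h_1 \\ h_2 \end{psmallmatrix} = \begin{psmallmatrix} y \\ 0 \end{psmallmatrix}$ is \emph{equivalent} to $(\lambda - A_S)h_1 = y$ together with $h_2 = S A_{21} h_1$, so that $(\lambda - A_S)^{-1}$ is exactly the first component of $(\lambda - (A_\ext + P_\lambda))^{-1}$ restricted to $H_1 \times \{0\}$. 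Compactness follows at once, with no a~priori estimate needed.

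Your argument instead keeps $A_\ext$ unperturbed and pays for this by the appearance of the extra term $(\lambda S - I)A_{21}h_n$ in the second component of $(\lambda - A_\ext)z_n$. You then control this term by the dissipativity--coercivity estimate $\nu\norm{A_{21}h_n}_{H_2}^2 \le -\re\inner{A_S h_n}{h_n}_{H_1}$, which is exactly the mechanism behind Lemma~\ref{lem:approx-sequence}. The paper's approach is more algebraic and sidesteps the estimate entirely by choosing $P_\lambda$ so that the second component vanishes identically; your approach is more analytic and reuses the central estimate of the paper. Both are short and either would serve.
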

\begin{proof}
    First, we define the following operator matrix on $H_1 \times H_2$ for $\lambda \in \bbC$
    \begin{align*}
        P_\lambda = 
        \begin{pmatrix}
            0 & 0 \\
            0 & -S^{-1} + \lambda \id
        \end{pmatrix},
    \end{align*}
    where $S: H_2 \to H_2$ is the coercive linear operator from Section \ref{subsection:the-operators}.
    For $\lambda \in \rho(A_\ext)$, we have
    \begin{align*}
        (\lambda -(A_\ext +P_\lambda))^{-1} = (\lambda -A_\ext)^{-1} + (\lambda -A_\ext)^{-1} P_\lambda (\lambda -(A_\ext +P_\lambda))^{-1}.
    \end{align*}
    Thus, by assumption we get that $A_\ext + P_\lambda$ has compact resolvent.
    For $y \in H_1$ and $(h_1,h_2) \in H_1 \times H_2$ we have 
    \begin{align*}
        (\lambda -(A_\ext+P_\lambda))
        \begin{pmatrix}
            h_1 \\ 
            h_2
        \end{pmatrix}
        = 
        \begin{pmatrix}
            y \\ 
            0
        \end{pmatrix}
    \end{align*}
    if and only if
    \begin{align*}
        y &= (\lambda -A_S) h_1 \text{ and } \\
        h_2 &= SA_2h_1.
    \end{align*}
    Thus, for every $\lambda \in \rho(A_\ext)\cap \rho(A_S)$ and every $y \in H_1$ one has
    \begin{align*}
        (\lambda -A_S)^{-1} y = \pi_1 (\lambda -(A_\ext +P_\lambda))^{-1}
        \begin{pmatrix}
            y \\
            0
        \end{pmatrix}
    \end{align*}
    From this equality we see that $A_S$ has compact resolvent.
\end{proof}

\subsection{Exponential stability}

We call a $C_0$-semigroup \emph{exponentially stable} if it converges to $0$ with respect to the operator norm as time tends to $\infty$.
In addition to exponential stability we will also use the following notion, which is -- for semigroup generators -- stronger:
\begin{definition}
    \label{def:strict-dissipativity}
    A linear operator $A: \dom(A) \subseteq H \to H$ on a complex Hilbert space $H$ is called \textit{strictly dissipative} if there exists $\varepsilon > 0$ such that
    \begin{equation*}
	\re \inner{Ah}{h} \leq -\varepsilon \norm{h}^2
    \end{equation*}
    for all $h \in \dom(A)$.
\end{definition}
If $A: \dom(A) \subseteq H \to H$ is the generator of a $C_0$-semigroup $(e^{tA})_{t \ge 0}$ on the complex Hilbert space $H$, then strict dissipativity of $A$ implies that $\norm{e^{tA}} \le e^{-t\varepsilon}$ for all $t \ge 0$ where $\varepsilon > 0$ is the number from Definition~\ref{def:strict-dissipativity}.
Hence, strict dissipativity of semigroup generators does indeed imply exponential stability of the semigroup (but the converse is not true, as one can easily observe even on two-dimensional spaces).

We now give a variety of sufficient criteria for the exponential stability of $(e^{tA_S})_{t \ge 0}$ in Theorems~\ref{thm:h_1-estimates} and~\ref{thm:empty-peripheral-spec}. 
The first of the theorems is quite elementary to prove.
The assumption that $A_{21}$ is bounded below in part~\ref{thm:h_1-estimates:itm:poincare} can be interpreted as an abstract version of the Poincaré inequality.
\begin{theorem}
    \label{thm:h_1-estimates}
    Assume that the operator $A_\ext$ generates a contraction semigroup on $H_1 \times H_2$. 
    Any of the following two assumptions implies that $A_S$ is strictly dissipative (and thus that the semigroup $(e^{tA_S})_{t \ge 0}$ is exponentially stable).
    \begin{enumerate}[label=\upshape(\arabic*)]
        \item\label{thm:h_1-estimates:itm:dissipative} 
        There exists $\varepsilon > 0$ such that 
        \begin{equation*}
		  \re \inner{A_\ext h}{h} \leq - \varepsilon \norm{h_1}_{H_1}^2
        \end{equation*}
        for all $h = (h_1,h_2) \in \dom(A_\ext)$.
        
        \item\label{thm:h_1-estimates:itm:poincare} 
	The operator $A_{21}$ is \emph{bounded below}, i.e., there exists a constant $c > 0$ such that 
        \begin{align*}
    	\norm{A_{21} h_1}_{H_2} \geq c \norm{h_1}_{H_1}
        \end{align*}
        for all $h_1 \in \dom(A_{21})$.
    \end{enumerate}
\end{theorem}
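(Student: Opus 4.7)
The plan is to exploit the key identity from Remark~\ref{rem:A_ext-vs-A_S} by testing $A_\ext$ against the special vector that links $A_\ext$ with $A_S$. Concretely, fix $h_1 \in \dom(A_S)$ and set
\begin{equation*}
    z \coloneqq \begin{pmatrix} h_1 \\ S A_{21} h_1 \end{pmatrix} \in \dom(A_\ext).
\end{equation*}
By~\eqref{eq:A_ext-vs-A_S}, one computes
\begin{equation*}
    \langle A_\ext z, z \rangle_{H_1 \times H_2}
    = \langle A_S h_1, h_1 \rangle_{H_1} + \langle A_{21} h_1, S A_{21} h_1 \rangle_{H_2}.
\end{equation*}
Taking real parts and using the coercivity of $S$ on the second summand (applied to $h_2 = A_{21} h_1$) gives the fundamental estimate
\begin{equation*}
    \re \langle A_S h_1, h_1 \rangle_{H_1}
    = \re \langle A_\ext z, z \rangle_{H_1 \times H_2} - \re \langle A_{21} h_1, S A_{21} h_1 \rangle_{H_2}
    \le \re \langle A_\ext z, z \rangle_{H_1 \times H_2} - \nu \| A_{21} h_1 \|_{H_2}^2.
\end{equation*}
Since $A_\ext$ generates a contraction semigroup, it is dissipative, so the first term on the right is $\le 0$ in any case.

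From this identity both parts follow immediately. For part~\ref{thm:h_1-estimates:itm:dissipative}, apply the hypothesis to $z$ to bound $\re \langle A_\ext z, z \rangle \le -\varepsilon \|h_1\|_{H_1}^2$; dropping the (nonpositive) term $-\nu \|A_{21} h_1\|_{H_2}^2$ yields $\re \langle A_S h_1, h_1 \rangle \le -\varepsilon \|h_1\|_{H_1}^2$, which is strict dissipativity. For part~\ref{thm:h_1-estimates:itm:poincare}, use only dissipativity of $A_\ext$ together with the lower bound $\|A_{21} h_1\|_{H_2} \ge c \|h_1\|_{H_1}$ to obtain
\begin{equation*}
    \re \langle A_S h_1, h_1 \rangle_{H_1} \le -\nu c^2 \|h_1\|_{H_1}^2,
\end{equation*}
which again is strict dissipativity.

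Finally, invoking the standard fact that strict dissipativity of a $C_0$-semigroup generator on a Hilbert space implies the norm estimate $\|e^{tA_S}\| \le e^{-\varepsilon t}$, we conclude exponential stability of $(e^{tA_S})_{t \ge 0}$. There is no real obstacle in this proof: the only subtle point is to recognize that testing $A_\ext$ against the vector $(h_1, S A_{21} h_1)$ isolates $A_S$ in the first coordinate while producing the coercive term $\langle A_{21} h_1, S A_{21} h_1 \rangle$ in the second, which is either absorbed or used as a Poincaré-type lower bound.
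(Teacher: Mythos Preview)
Your proof is correct and follows essentially the same approach as the paper: you test $A_\ext$ against the vector $z = (h_1, S A_{21} h_1)$ using Remark~\ref{rem:A_ext-vs-A_S} to derive the key identity $\re\inner{A_S h_1}{h_1} = \re\inner{A_\ext z}{z} - \re\inner{A_{21} h_1}{S A_{21} h_1}$, and then handle the two cases exactly as in the paper's argument.
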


\begin{proof}
    It follows from \eqref{eq:A_ext-vs-A_S} that, for all $h_1 \in \dom(A_S)$,
    \begin{align}  
        \label{eq:thm:h_1-estimates:general-formula}
        \begin{split}
            \inner{A_S h_1}{h_1}_{H_1} 
            & = 
            \inner{
                A_\ext 
                \begin{pmatrix}
                    h_1 \\ S A_{21} h_1 
                \end{pmatrix}
            }{
                \begin{pmatrix}
                    h_1 \\ 0
    		\end{pmatrix}
            }_{H_1 \times H_2}
            \\
            & = 
            \inner{
                A_\ext 
                \begin{pmatrix}
                    h_1 \\ S A_{21} h_1 
		      \end{pmatrix}
            }{
                \begin{pmatrix}
                    h_1 \\ S A_{21} h_1
		      \end{pmatrix}
            }_{H_1 \times H_2}
            - 
            \inner{
                A_{21}h_1
            }{
                S A_{21} h_1
            }_{H_2}
        \end{split}
    \end{align}
    Now we consider both cases~\ref{thm:h_1-estimates:itm:dissipative} and~\ref{thm:h_1-estimates:itm:poincare} separately:

    \ref{thm:h_1-estimates:itm:dissipative}
    As $S$ is coercive, all values in its numerical range have real part $\ge 0$. 
    So by taking real parts in~\eqref{eq:thm:h_1-estimates:general-formula} and using condition \ref{thm:h_1-estimates:itm:dissipative} we obtain
    \begin{align*}
        \re \inner{A_S h_1}{h_1}_{H_1} 
        \le 
        -\varepsilon \norm{h_1}_{H_1}^2
    \end{align*}
    for all $h_1 \in \dom(A_S)$, which proves the strict dissipativity of $A_S$.
    
    \ref{thm:h_1-estimates:itm:poincare}
    As $A_\ext$ is dissipative and $S$ is coercive, taking real parts in~\eqref{eq:thm:h_1-estimates:general-formula} and using condition~\ref{thm:h_1-estimates:itm:poincare} yields 
    \begin{align*}
        \re \inner{A_S h_1}{h_1}_{H_1} 
        \le
        -\nu \norm{A_{21} h_1}^2_{H_2} 
        \le 
        -\nu c^2 \norm{h_1}_{H_1}^2
    \end{align*}
    for all $h_1 \in \dom(A_S)$, which shows that $A_S$ is strictly dissipative.
\end{proof}

For our second theorem on exponential stability of $A_\ext$ (Theorem~\ref{thm:empty-peripheral-spec} below) we use two auxiliary results: 
the first one, Lemma~\ref{lem:gp-lemma},
is a characterization of exponential stability. 
It follows easily from the Gearhart--Prüss theorem \cite[Theorem~V.1.11 on p.\,302]{EngelNagel2000}, so we omit its proof.
The second one, Lemma~\ref{lem:approx-sequence-bounded}, relates Lemma~\ref{lem:approx-sequence} to imaginary spectral values of $A_\ext$.
\begin{lemma}
    \label{lem:gp-lemma}
    Let $A$ be the generator of a $C_0$-semigroup on a Hilbert space $H$ with growth bound at most $0$. 
    Then the following are equivalent: 
    \begin{enumerate}[label=\upshape(\roman*)]
    \item\label{lem:gp-lemma:itm:not-stable}
     The semigroup $(e^{tA})_{t \ge 0}$ is not exponentially stable.
    \item\label{lem:gp-lemma:itm:sequences} 
        There exist sequences \((\ui\omega_n) \subseteq \ui\bbR\) and \((v_n) \subseteq \dom(A)\) such that \(\norm{v_n}=1\) for all $n$ and such that
        \begin{equation*}
	   \lim_{n \to \infty} (i\omega_n - A)v_n = 0.
        \end{equation*}
    \end{enumerate}
\end{lemma}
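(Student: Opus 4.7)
The plan is to derive Lemma~\ref{lem:gp-lemma} directly from the Gearhart--Prüss theorem in the form stated in \cite[Theorem~V.1.11 on p.\,302]{EngelNagel2000}: for a $C_0$-semigroup generator $A$ on a Hilbert space, exponential stability is equivalent to $\ui \bbR \subseteq \rho(A)$ together with $M \coloneqq \sup_{\omega \in \bbR} \norm{(\ui\omega - A)^{-1}} < \infty$. Given this, both implications become quick verifications; the hypothesis that the growth bound is at most $0$ is used only to pin down the location of the spectrum in the closed left half plane.

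For \ref{lem:gp-lemma:itm:sequences} $\Rightarrow$ \ref{lem:gp-lemma:itm:not-stable}, I would argue by contradiction: assuming the semigroup is exponentially stable, Gearhart--Prüss yields $\ui\omega_n \in \rho(A)$ together with the uniform bound $M < \infty$, so $1 = \norm{v_n} = \norm{(\ui\omega_n - A)^{-1}(\ui\omega_n - A)v_n} \le M \norm{(\ui\omega_n - A)v_n} \to 0$, which is impossible.

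For \ref{lem:gp-lemma:itm:not-stable} $\Rightarrow$ \ref{lem:gp-lemma:itm:sequences}, I would negate the Gearhart--Prüss criterion and split into two cases. In the first case some $\ui\omega_0$ lies in $\spec(A)$; since the growth bound is at most $0$, the spectrum sits in the closed left half plane, so $\ui\omega_0 \in \partial \spec(A)$ is an approximate eigenvalue by \cite[Proposition~IV.1.10 on p.\,242]{EngelNagel2000} (the same fact already used in the proof of Theorem~\ref{thm:spectrum}). Any corresponding sequence of unit vectors $(v_n)$ in $\dom(A)$ with constant $\omega_n = \omega_0$ then realises \ref{lem:gp-lemma:itm:sequences}. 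In the second case $\ui\bbR \subseteq \rho(A)$ but $\sup_{\omega \in \bbR} \norm{(\ui\omega - A)^{-1}} = \infty$; then I would pick $\omega_n \in \bbR$ with $\norm{(\ui\omega_n - A)^{-1}} \to \infty$, unit vectors $u_n \in H$ satisfying $\norm{(\ui\omega_n - A)^{-1} u_n} \ge \tfrac{1}{2} \norm{(\ui\omega_n - A)^{-1}}$, and set $v_n \coloneqq (\ui\omega_n - A)^{-1} u_n / \norm{(\ui\omega_n - A)^{-1} u_n}$, so that $v_n \in \dom(A)$ has $\norm{v_n} = 1$ and $(\ui\omega_n - A)v_n = u_n / \norm{(\ui\omega_n - A)^{-1} u_n} \to 0$. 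The statement is essentially a reformulation of Gearhart--Prüss and presents no real obstacle, which is why the authors omit the proof; the only point that requires a small piece of background is the observation in the first subcase that boundary spectral values are approximate eigenvalues.
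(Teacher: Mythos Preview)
Your argument is correct and is precisely the approach the paper indicates: the authors state that the lemma ``follows easily from the Gearhart--Prüss theorem \cite[Theorem~V.1.11 on p.\,302]{EngelNagel2000}, so we omit its proof,'' and your two-case derivation from the negated criterion (together with the boundary--approximate-eigenvalue fact \cite[Proposition~IV.1.10]{EngelNagel2000} in the first case) is exactly the intended routine verification.
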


\begin{lemma}
    \label{lem:approx-sequence-bounded}
    Assume that $A_\ext$ generates a contraction semigroup on $H_1 \times H_2$. 
    Let $(\ui\omega_n) \subseteq \ui\bbR$ be a sequence of numbers and let $(v_n) \subseteq \dom(A_S)$ be a sequence of vectors such that $\norm{v_n}_{H_1} = 1$ for all $n$ and such that 
    \begin{align*}
        (\ui\omega_n - A_S) v_n \to 0
        \qquad \text{in } H_1.
    \end{align*}
    If the sequence $(\ui\omega_n)$ is bounded, then $\spec(A_\ext) \cap \ui \bbR \not= \emptyset$.
\end{lemma}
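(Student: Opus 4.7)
The plan is to build an approximate eigenvector sequence for $A_\ext$ at a suitable point of the imaginary axis, by lifting the given approximate eigenvector sequence of $A_S$. The construction is the same one that appeared already in the proof of Theorem~\ref{thm:spectrum}; the new ingredient is that the boundedness of $(\omega_n)$ lets us extract a convergent subsequence of the imaginary ``eigenvalues''.

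First, I would apply Lemma~\ref{lem:approx-sequence} to the given sequences $(\ui\omega_n)$ and $(v_n)$, obtaining $A_{21}v_n \to 0$ in $H_2$. Then, exactly as in the proof of Theorem~\ref{thm:spectrum}, I set
\begin{align*}
    z_n := \begin{pmatrix} v_n \\ S A_{21} v_n \end{pmatrix} \in \dom(A_\ext)
\end{align*}
and observe that $\norm{z_n}_{H_1\times H_2}^2 = 1 + \norm{S A_{21} v_n}_{H_2}^2 \in [1, 1 + \norm{S}^2\norm{A_{21}v_n}_{H_2}^2]$, so $\norm{z_n}_{H_1\times H_2}$ is bounded above and bounded below by $1$. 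Using Remark~\ref{rem:A_ext-vs-A_S}, I compute
\begin{align*}
    (\ui\omega_n - A_\ext) z_n
    =
    \begin{pmatrix} (\ui\omega_n - A_S) v_n \\ \ui\omega_n\, S A_{21} v_n - A_{21} v_n \end{pmatrix}.
\end{align*}
The first component tends to $0$ by assumption. For the second component, $A_{21}v_n \to 0$ and, since $(\omega_n)$ is bounded and $S$ is continuous, also $\ui\omega_n\, S A_{21} v_n \to 0$. Hence $(\ui\omega_n - A_\ext) z_n \to 0$ in $H_1 \times H_2$.

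Now I would use the boundedness of $(\omega_n)$ to pass to a subsequence with $\omega_n \to \omega$ for some $\omega \in \bbR$. Because $(z_n)$ is a bounded sequence, $(\ui\omega - \ui\omega_n) z_n \to 0$, and therefore $(\ui\omega - A_\ext) z_n \to 0$. Setting $\tilde z_n := z_n / \norm{z_n}_{H_1 \times H_2}$, the lower bound $\norm{z_n}_{H_1\times H_2} \ge 1$ preserves the convergence $(\ui\omega - A_\ext) \tilde z_n \to 0$, while $\norm{\tilde z_n}_{H_1\times H_2} = 1$. Thus $\ui\omega$ is an approximate eigenvalue of $A_\ext$ and in particular $\ui\omega \in \spec(A_\ext) \cap \ui\bbR$, which is therefore nonempty.

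There is no real obstacle here: the whole argument is a direct combination of Lemma~\ref{lem:approx-sequence} with the Bolzano--Weierstrass trick on $(\omega_n)$. The only small point that must not be overlooked is the lower bound $\norm{z_n}_{H_1\times H_2} \ge 1$, which guarantees that after normalizing we still have an approximate eigenvector sequence.
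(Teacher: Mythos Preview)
Your proof is correct. The paper's proof takes a slightly different, more modular route: after extracting a convergent subsequence $\omega_n \to \omega$, it shows directly that $(\ui\omega - A_S)v_n \to 0$ (using $(\ui\omega - A_S)v_n = (\ui\omega - \ui\omega_n)v_n + (\ui\omega_n - A_S)v_n$), concludes that $\ui\omega \in \spec(A_S)$, and then simply invokes Theorem~\ref{thm:spectrum} to obtain $\ui\omega \in \spec(A_\ext)$. You instead bypass $A_S$ at the limit point and lift to $A_\ext$ directly, essentially inlining the construction from the proof of Theorem~\ref{thm:spectrum}. The paper's version is shorter and avoids repeating that computation; yours is more self-contained and makes the dependence on the boundedness of $(\omega_n)$ visible in two places (convergence of the second component and the subsequence extraction), whereas the paper only uses boundedness for the latter. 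Both are perfectly fine.
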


Note that there are two differences between the assumptions in this lemma and in Lemma~\ref{lem:approx-sequence}: 
we now assume that $A_\ext$ generates a contraction semigroup (rather than being merely dissipative) and that the sequence $(\ui\omega_n)$ is bounded.

\begin{proof}[Proof of Lemma~\ref{lem:approx-sequence-bounded}]
    First note that $A_{21}v_n \to 0$ in $H_2$ due to Lemma~\ref{lem:approx-sequence}.
    After replacing $(\ui\omega_n)$ and $(v_n)$ with subsequences, we may assume that $\ui\omega_n \to \ui \omega$ for a point $\ui\omega \in \ui\bbR$. 
    From
    \begin{align*}
        (\ui \omega - A_S) v_n 
        = 
        (\ui \omega - \ui\omega_n) v_n + (\ui \omega_n - A_S) v_n
        \to 0
    \end{align*}
    we see that
    $\ui \omega$ is an approximate eigenvalue (and thus a spectral value) of $A_S$.
    The claim thus follows from Theorem~\ref{thm:spectrum}.
\end{proof}

Now we can prove our second theorem on exponential stability.
By $\numRange(A) \coloneqq \left\{\inner{v}{Av} \suchthat v \in \dom(A), \norm{v} =1\right\}$ we denote the \emph{numerical range} of an operator $A$.
\begin{theorem}
    \label{thm:empty-peripheral-spec}
    Assume that $A_\ext$ generates a contraction semigroup and that
    \begin{align*}
        \spec(A_\ext) \cap \ui \bbR = \emptyset.
    \end{align*}
    Then any of the following assumptions implies that $(e^{tA_S})_{t \ge 0}$ is exponentially stable:
    \begin{enumerate}[label=\upshape(\arabic*)]
        \item\label{thm:empty-peripheral-spec:itm:compact-embedding} 
        The space $\dom(A_{21})$, endowed with the graph norm of $A_{21}$, embeds compactly into $H_1$.
        
        \item\label{thm:empty-peripheral-spec:itm:ev-norm-cont} 
        The semigroup $(e^{tA_S})_{t \ge 0}$ is eventually norm continuous.
        
        \item\label{thm:empty-peripheral-spec:itm:numerical-range} 
        The splitting assumption~\ref{ass:splitting} on the domain $\dom(A_1)$ is satisfied, 
        and there exists $\varepsilon > 0$ such that the set $\numRange(A_{11}) \cap ([-\varepsilon,0] \times \ui\bbR)$ is bounded.
    \item\label{thm:empty-peripheral-spec:itm:positive} 
        One has $H_1 = L^2(\Omega,\mu)$ for a measure space $(\Omega,\mu)$ and the semigroup $(e^{tA_S})_{t \ge 0}$ on $H_1$ is \emph{positive} in the sense that $0 \le f \in H_1$ implies $e^{tA_S} f \ge 0$ for all $t \ge 0$.%
        \footnote{
            Here, inequalities in $H_1 = L^2(\Omega,\mu)$ are understood in the almost everywhere sense.
        }
    \end{enumerate}
\end{theorem}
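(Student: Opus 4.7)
The plan is to combine the common observation that, by Theorem~\ref{thm:spectrum} and Theorem~\ref{thm:inheritance-of-domination}, $\spec(A_S)\cap\ui\bbR=\emptyset$ and $A_S$ generates a contraction semigroup, so $\spec(A_S)\subseteq\{\lambda\in\bbC:\re\lambda<0\}$, with case-specific arguments that upgrade this to exponential decay. For cases~\ref{thm:empty-peripheral-spec:itm:compact-embedding} and~\ref{thm:empty-peripheral-spec:itm:numerical-range}, I proceed by contradiction via Lemma~\ref{lem:gp-lemma}: if the semigroup were not exponentially stable, one would obtain sequences $(\ui\omega_n)\subseteq\ui\bbR$ and $(v_n)\subseteq\dom(A_S)$ with $\norm{v_n}_{H_1}=1$ and $(\ui\omega_n-A_S)v_n\to 0$. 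Lemma~\ref{lem:approx-sequence-bounded} together with the hypothesis $\spec(A_\ext)\cap\ui\bbR=\emptyset$ rules out a bounded $(\omega_n)$, so along a subsequence $|\omega_n|\to\infty$; and Lemma~\ref{lem:approx-sequence} simultaneously gives $A_{21}v_n\to 0$ in $H_2$.

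In case~\ref{thm:empty-peripheral-spec:itm:compact-embedding}, the compact embedding $\dom(A_{21})\hookrightarrow H_1$, applied to the graph-norm bounded sequence $(v_n)$, produces a subsequence $v_{n_k}\to v$ in $H_1$ with $\norm{v}=1$. The key trick is to rescale by $1/(\ui\omega_n)$: set
\begin{equation*}
\tilde z_n \coloneqq \frac{1}{\ui\omega_n}\begin{pmatrix} v_n \\ SA_{21}v_n \end{pmatrix}\in\dom(A_\ext).
\end{equation*}
Then $\tilde z_n\to 0$ in $H_1\times H_2$ (both coordinates have norms controlled by constants times $1/|\omega_n|$), while by Remark~\ref{rem:A_ext-vs-A_S}
\begin{equation*}
A_\ext\tilde z_n = \frac{1}{\ui\omega_n}\begin{pmatrix} A_Sv_n \\ A_{21}v_n \end{pmatrix} = \begin{pmatrix} v_n-\tfrac{1}{\ui\omega_n}(\ui\omega_n-A_S)v_n \\ \tfrac{1}{\ui\omega_n}A_{21}v_n \end{pmatrix}\longrightarrow\begin{pmatrix} v \\ 0 \end{pmatrix}
\end{equation*}
along $(n_k)$. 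Closedness of $A_\ext$ then forces $A_\ext 0=(v,0)$ and hence $v=0$, contradicting $\norm{v}=1$.

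In case~\ref{thm:empty-peripheral-spec:itm:numerical-range}, I test $(\ui\omega_n-A_S)v_n\to 0$ against $v_n$: using the splitting $A_S=A_{11}+A_{12}SA_{21}$ from Proposition~\ref{prop:splitting-repr} and Theorem~\ref{thm:skew-hermitian} to rewrite $\inner{A_{12}SA_{21}v_n}{v_n}=-\inner{SA_{21}v_n}{A_{21}v_n}$, which vanishes because $A_{21}v_n\to 0$, one obtains $\inner{A_{11}v_n}{v_n}=\ui\omega_n+o(1)$. The dissipativity of $A_\ext$ tested on $(v_n,0)\in\dom(A_\ext)$ gives $\re\inner{A_{11}v_n}{v_n}\le 0$, and the previous identity forces $\re\inner{A_{11}v_n}{v_n}\to 0$ together with $|\im\inner{A_{11}v_n}{v_n}|=|\omega_n|+o(1)\to\infty$. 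For large $n$ this places $\inner{A_{11}v_n}{v_n}$ inside $\numRange(A_{11})\cap([-\varepsilon,0]\times\ui\bbR)$, contradicting the boundedness assumption.

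For cases~\ref{thm:empty-peripheral-spec:itm:ev-norm-cont} and~\ref{thm:empty-peripheral-spec:itm:positive}, I would invoke classical results that bridge spectral and growth bounds. In case~\ref{thm:empty-peripheral-spec:itm:ev-norm-cont}, the spectral mapping theorem for eventually norm continuous $C_0$-semigroups yields $\spec(e^{tA_S})\setminus\{0\}=e^{t\spec(A_S)}\subseteq\{|z|<1\}$; since $\spec(e^{tA_S})$ is a compact subset of the open unit disk, $\spr(e^{tA_S})<1$ and hence the semigroup is exponentially stable. In case~\ref{thm:empty-peripheral-spec:itm:positive}, Derndinger's theorem for positive $C_0$-semigroups on Banach lattices gives $s(A_S)\in\spec(A_S)$; since $s(A_S)\le 0$ and $\spec(A_S)\cap\ui\bbR=\emptyset$, we must have $s(A_S)<0$, and Weis's theorem ($s=\omega_0$ for positive $C_0$-semigroups on $L^p$) then yields exponential stability. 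The principal obstacle I anticipate is identifying the rescaling in case~\ref{thm:empty-peripheral-spec:itm:compact-embedding}: recognising that dividing by $\ui\omega_n$ converts the blow-up $\norm{A_Sv_n}\to\infty$ into a clean closedness statement for $A_\ext$ is the one genuinely subtle step; the other three cases follow routinely once the correct imported theorem is identified.
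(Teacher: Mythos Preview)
Your proposal is correct and follows essentially the same strategy as the paper. The only noteworthy difference is in case~\ref{thm:empty-peripheral-spec:itm:compact-embedding}: after extracting a convergent subsequence $v_{n_k}\to v$, the paper stays in $H_1$ and divides $(\ui\omega_n-A_S)v_n\to 0$ by $\omega_n$ to obtain $v_n/\omega_n\to 0$ with $A_S(v_n/\omega_n)\to\ui v\neq 0$, contradicting closedness of $A_S$; you instead lift to $H_1\times H_2$ via $(v_n,SA_{21}v_n)$, divide by $\ui\omega_n$, and invoke closedness of $A_\ext$. Both arguments are the same rescaling trick, but the paper's version is slightly leaner since it avoids the detour through the extended space. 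For cases~\ref{thm:empty-peripheral-spec:itm:ev-norm-cont}, \ref{thm:empty-peripheral-spec:itm:numerical-range}, and~\ref{thm:empty-peripheral-spec:itm:positive} your argument matches the paper's essentially line by line (the paper phrases~\ref{thm:empty-peripheral-spec:itm:ev-norm-cont} via the compactness of $\spec(A_S)\cap([-1,0]\times\ui\bbR)$ and the equality $\spb(A_S)=\omega_0(A_S)$ rather than the spectral mapping theorem, but these are equivalent standard facts for eventually norm continuous semigroups).
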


\begin{proof}
    \ref{thm:empty-peripheral-spec:itm:compact-embedding}
    Assume that $(e^{tA_S})_{t \ge 0}$ is not exponentially stable. 
    Then it follows from Lemma~\ref{lem:gp-lemma} that there exists a sequence $(\ui\omega_n) \subseteq \ui \bbR$ and a sequence of vectors $(v_n) \subseteq \dom(A_S)$ of norm $\norm{v_n}_{H_1} = 1$ such that $(\ui \omega_n - A_S)v_n \to 0$ in $H_1$.
    
    According to Lemma~\ref{lem:approx-sequence} we have $A_{21}v_n \to 0$ in $H_2$. 
    Hence, the sequence $(v_n)$ is bounded in $\dom(A_{21})$ with respect to the graph norm. 
    As $\dom(A_{21})$ embeds compactly into $H_1$ by assumption, we may assume, after replacing $(v_n)$ and $(\ui\omega_n)$ with appropriate subsequences, that $v_n$ converges to a normalized vector $v \in H_1$ with respect to the $H_1$-norm. 
    Next we show that the sequence $(\ui\omega_n)$ is bounded. 
    Indeed, assume the contrary; after passing to another subsequence we than get $0 \not= \modulus{\omega_n} \to \infty$. 
    It follows from $\ui \omega_n v_n - A_S v_n \to 0$ by dividing by $\omega_n$ that 
    \begin{align*}
        \ui v_n - A_S \big( v_n / \omega_n \big) \to 0.
    \end{align*}
    But $\ui v_n \to \ui v \not= 0$ and $v_n / \omega_n \to 0$ (where both limits take place in $H_1$). 
    This contradicts the closedness of $A_S$.
    Therefore, $(\ui\omega_n)$ is indeed bounded, as claimed. 
    According to Lemma~\ref{lem:approx-sequence-bounded} this contradicts the assumption $\spec(A_\ext) \cap \ui\bbR = \emptyset$.
    
    \ref{thm:empty-peripheral-spec:itm:ev-norm-cont}
    It follows from the spectral assumption of the theorem and from Theorem~\ref{thm:spectrum} that $\spec(A_S) \cap \ui\bbR = \emptyset$. 
    Moreover, due to the eventual norm continuity of $(e^{tA_S})_{t \ge 0}$, the intersection of $\spec(A_S)$ with the vertical strip $[-1,0] \times \ui \bbR$ is compact; see \cite[Theorem~II.4.18]{EngelNagel2000}. 
    Hence the spectral bound of $A_S$ satisfies $\spb(A_S) < 0$. 
    Again due to the eventual norm continuity, the spectral bound $\spb(A_S)$ coincides with the growth bound of $(e^{tA_S})_{t \ge 0}$ \cite[Corollary~IV.3.11]{EngelNagel2000}, which implies the claim.
    
    \ref{thm:empty-peripheral-spec:itm:numerical-range} 
    Assume $(e^{tA_S})_{t\ge0}$ is not exponentially stable. 
    Then it follows from Lemma~\ref{lem:gp-lemma} that there exists a sequence $(\ui\omega_n) \subseteq \ui\bbR$ and a sequence $(v_n) \subseteq \dom(A_S)$ with $\norm{v_n}_{H_1} = 1$ for all $n$ such that
    \begin{align*}
        (\ui \omega_n  - A_S)v_n \to 0.
    \end{align*}
    By testing this convergence against the bounded sequence $(v_n)$ and using that the splitting assumption implies $A_S v_n = (A_{11} + A_{12}SA_{21}) v_n$ for each $n$ (according to Proposition~\ref{prop:splitting-repr}\ref{prop:splitting-repr:itm:A_S}), we thus get
    \begin{align*}
        \inner{v_n}{(\ui \omega_n - A_{11} - A_{12}SA_{21})v_n} \to 0.
    \end{align*}
    Moreover, it follows from Theorem~\ref{thm:skew-hermitian} and Lemma~\ref{lem:approx-sequence} that
    \begin{align*}
        \modulus{\inner{v_n}{A_{12}SA_{21}v_n}} = \modulus{\inner{A_{21} v_n}{S A_{21} v_n}} \le \norm{S} \norm{A_{21}v_n}^2 \to 0
    \end{align*}
    as $n \to \infty$, and thus
    \begin{align*}
        \ui\omega_n - \inner{v_n}{A_{11}v_n}
        = 
        \inner{v_n}{(\ui \omega_n - A_{11} - A_{12}SA_{21})v_n} + \inner{v_n}{A_{12}SA_{21}v_n}
        \to 
        0
    \end{align*}
    as $n \to \infty$.
    Since, for some $\varepsilon > 0$ the set $\numRange(A_{11}) \cap ([-\varepsilon, 0] \times \ui\mathbb R)$ is bounded by assumption, 
    we conclude that $(\omega_n)_{n \in \mathbb N}$ is bounded as well. 
	Thus, Lemma~\ref{lem:approx-sequence-bounded} shows that $\spec(A_\ext) \cap \ui \bbR \neq \emptyset$, which contradicts the assumptions of the theorem.
    
    \ref{thm:empty-peripheral-spec:itm:positive} 
    For positive $C_0$-semigroups on $L^2$-spaces, the spectral bound and the growth bound coincide \cite[Theorem~C-IV-1.1(a)]{Nagel1986}, and the spectral bound is either $-\infty$ or a spectral value \cite[Corollary~C-III-1.4]{Nagel1986}. 
    
    As $0 \not \in \spec(A_\ext)$, it follows from Theorem~\ref{thm:spectrum} that $0 \not\in \spec(A_S)$, so the growth bound of the contractive semigroup $(e^{tA_S})_{t \ge 0}$ is strictly negative.
\end{proof}

\subsection{Strong stability}

In this brief subsection we discuss strong (rather than exponential) stability. 
We call a $C_0$-semigroup $(e^{tA})_{t \ge 0}$ on a Hilbert space $H$ \emph{strongly stable} if $e^{tA}h \to 0$ as $t \to \infty$ for each $h \in H$.

\begin{proposition}
    \label{prop:compact-resolvent-strongly-stable}
    Assume that $A_\ext$ generates a contractive $C_0$-semigroup on $H_1 \times H_2$.
    If $A_S$ has compact resolvent%
    \footnote{
      Note that this is, for instance, satisfied if $A_{21}$ is closed and $\dom(A_{21})$, endowed with the graph norm $\norm{\argument}_{A_{21}}$, embeds compactly into $H_1$.
    }
    and $A_{21}$ is injective, then $(e^{tA_S})_{t \ge 0}$ converges strongly to $0$ as $t \to \infty$.
\end{proposition}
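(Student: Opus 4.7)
The plan is to apply the Arendt--Batty--Lyubich--Vu theorem for strong stability. The three ingredients we need are: a bounded $C_0$-semigroup, a Hilbert (hence reflexive) state space, and a spectral condition on the imaginary axis.

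First, since $A_\ext$ generates a contractive $C_0$-semigroup, Theorem~\ref{thm:inheritance-of-domination} gives that $A_S$ also generates a contractive $C_0$-semigroup on $H_1$. In particular $(e^{tA_S})_{t \ge 0}$ is uniformly bounded, and $A_S$ is dissipative.

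Next I would pin down $\spec(A_S) \cap \ui\bbR$. By hypothesis $A_S$ has compact resolvent, so $\spec(A_S)$ is a discrete set consisting entirely of eigenvalues of finite multiplicity (standard spectral theory for operators with compact resolvent). Since $A_\ext$ generates a contraction semigroup it is dissipative, and $A_{21}$ is injective by assumption, so Corollary~\ref{cor:non-eigenvalues} applies and yields $\specPnt(A_S) \cap \ui\bbR = \emptyset$. Combining this with the fact that every point of $\spec(A_S)$ is an eigenvalue gives $\spec(A_S) \cap \ui\bbR = \emptyset$.

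Finally, I would invoke the Arendt--Batty--Lyubich--Vu theorem: a bounded $C_0$-semigroup on a reflexive Banach space whose generator $A$ has countable peripheral spectrum $\spec(A) \cap \ui\bbR$ and satisfies $\specPnt(A^*) \cap \ui\bbR = \emptyset$ is strongly stable. Here $\spec(A_S) \cap \ui\bbR = \emptyset$ is trivially countable; moreover, for each $\omega \in \bbR$ the operator $\ui\omega - A_S$ is invertible and hence has dense range, so $\ui\omega \notin \specPnt(A_S^*)$. Both conditions being verified, the theorem yields $e^{tA_S}h \to 0$ for every $h \in H_1$.

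The main conceptual step is the passage from compact resolvent plus Corollary~\ref{cor:non-eigenvalues} to emptiness (not just discreteness) of $\spec(A_S) \cap \ui\bbR$; after that, the result is a direct application of ABLV. Note that one should \emph{not} expect exponential stability here, because even with compact resolvent the eigenvalues of $A_S$ may accumulate toward $\ui\bbR$ at infinity, so Gearhart--Prüss does not apply.
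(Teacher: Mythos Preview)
Your argument is correct and follows essentially the same route as the paper: use Corollary~\ref{cor:non-eigenvalues} to rule out imaginary eigenvalues, combine with compact resolvent to get $\spec(A_S)\cap\ui\bbR=\emptyset$, and conclude strong stability via the ABLV theorem. The paper's proof is more terse and also notes that one may alternatively invoke the Jacobs--de~Leeuw--Glicksberg decomposition, but the substance is the same.
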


\begin{proof}
    According to Corollary~\ref{cor:non-eigenvalues} the injectivity of $A_{21}$ implies that $A_S$ does not have eigenvalues in $\ui\bbR$. 
    As $A_S$ has compact resolvent, the claim thus follows from the Jacobs--de Leeuw--Glicksberg decomposition theorem for operator semigroups in the strong operator topology \cite[Corollary~V.2.15(i)]{EngelNagel2000} 
    or, alternatively, from the ABLV theorem \cite[Theorem~V.2.21]{EngelNagel2000}.
\end{proof}

\subsection{Long-term behaviour in the parabolic case}
\label{subsec:long-term-parabolic}

If the equivalent conditions of Proposition~\ref{prop:skew-hermitian-equiv} are satisfied and $\dom(A_{21})$, endowed with the graph norm $\norm{\argument}_{A_{21}}$, embeds compactly into $H_1$, one can characterize the long-term behaviour of $(e^{tA_S})_{t \ge 0}$ in terms of the imaginary eigenvalues of $A_{11}$ and the kernel of $A_{21}$:

\begin{theorem} 
    \label{thm:long-time-parabolic-compact}
    In the setting of Section~\ref{section:setting} let the splitting assumption~\ref{ass:splitting} be satisfied and let $A_{11}$ be everywhere defined and bounded. 
    Assume moreover that $A_\ext$ generates a contractive $C_0$-semigroup on $H_1 \times H_2$ 
    and that $\dom(A_{21})$, endowed with the graph norm $\norm{\argument}_{A_{21}}$, embeds compactly into $H_1$.
    \begin{enumerate}[label=\upshape(\alph*)]
        \item\label{thm:long-time-parabolic-compact:itm:zero} 
        The semigroup $(e^{tA_S})_{t \ge 0}$ is exponentially stable if and only if $\ker A_{21} \cap \ker (\ui \omega - A_{11}) = \{0\}$ for each $\omega \in \bbR$;
        \item\label{thm:long-time-parabolic-compact:itm:projection} 
        The semigroup $(e^{tA_S})_{t \ge 0}$ converges with respect to the operator norm to as $t \to \infty$ if and only if $\ker A_{21} \cap \ker (\ui \omega - A_{11}) = \{0\}$ for each $\omega \in \bbR \setminus \{0\}$. 
        In this case the limit operator as $t \to \infty$ is the orthogonal projection onto $\ker A_{21} \cap \ker A_{11}$.
    \end{enumerate}
\end{theorem}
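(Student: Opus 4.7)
The plan is to reduce the long-time behaviour of $(e^{tA_S})_{t\ge 0}$ to the location of the eigenvalues of $A_S$ on the imaginary axis, which Corollary~\ref{cor:splitting-eigenvalues} already identifies as $\ker A_{21}\cap\ker(\ui\omega - A_{11})$. What is missing is the passage from imaginary-axis point spectrum of $A_S$ to the actual asymptotics of the semigroup; this is where analyticity together with compactness enters.

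First I would show that $A_S$ has compact resolvent. For $u \in \dom(A_S)$, the form identity $-\inner{A_S u}{u}_{H_1} = a(u,u)$ from Theorem~\ref{thm:via-forms}, combined with coercivity of $S$ and boundedness of $A_{11}$, yields
\[
    \nu\,\norm{A_{21}u}_{H_2}^2
    \le \norm{A_S u}_{H_1}\norm{u}_{H_1} + \norm{A_{11}}\,\norm{u}_{H_1}^2,
\]
so the $A_S$-graph norm dominates the $A_{21}$-graph norm on $\dom(A_S)$. The assumed compact embedding $\dom(A_{21})\hookrightarrow H_1$ then transfers to $\dom(A_S)$, giving compact resolvent. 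Since $(e^{tA_S})$ is analytic by Theorem~\ref{thm:via-forms}, $\spec(A_S) = \specPnt(A_S)$ is a countable set of eigenvalues of finite algebraic multiplicity with $\re\lambda \to -\infty$, and the spectral bound coincides with the growth bound. Moreover, $(e^{tA_S})$ is a contraction semigroup by Theorem~\ref{thm:inheritance-of-domination}, so every eigenvalue of $A_S$ on $\ui\bbR$ must be a first-order pole of the resolvent (a higher-order pole would force polynomial growth of $\norm{e^{tA_S}}$), i.e.\ semisimple.

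With this spectral picture, part~\ref{thm:long-time-parabolic-compact:itm:zero} is immediate: exponential stability is equivalent to $\spec(A_S)\cap\ui\bbR=\emptyset$, and via $\spec(A_S) = \specPnt(A_S)$ and Corollary~\ref{cor:splitting-eigenvalues} this is exactly the stated kernel condition. For part~\ref{thm:long-time-parabolic-compact:itm:projection} I would argue that $(e^{tA_S})$ converges in operator norm as $t\to\infty$ if and only if $\spec(A_S)\cap\ui\bbR\subseteq\{0\}$. The forward direction is direct: an eigenvalue $\ui\omega\ne 0$ forces $e^{tA_S}$ to act as $e^{t\ui\omega}\,\id$ on a non-trivial subspace, precluding norm convergence. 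Conversely, discreteness of the spectrum produces $\delta>0$ with $\spec(A_S)\setminus\{0\}\subseteq\{\re\lambda\le -\delta\}$, and semisimplicity of $0$ provides a decomposition $H_1 = \ker A_S \oplus H_1'$ on which $(e^{tA_S})$ decays exponentially off $\ker A_S$, so $e^{tA_S}$ converges in norm to the spectral projection $P$ onto $\ker A_S$. Translating via Corollary~\ref{cor:splitting-eigenvalues} yields the stated kernel condition.

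It remains to identify $P$. Operator-norm convergence combined with the semigroup law gives $P^2 = P$, and as a norm limit of contractions $\norm{P}\le 1$; the classical fact that a contractive projection on a Hilbert space is automatically orthogonal then makes $P$ the orthogonal projection onto its range, which equals $\ker A_S = \ker A_{21}\cap\ker A_{11}$ by Corollary~\ref{cor:splitting-eigenvalues} at $\omega=0$. The most delicate step is assembling the spectral picture in the second paragraph: simultaneously using compact resolvent (to get point spectrum only, discrete, accumulating only at $-\infty$), analyticity (spectral bound equals growth bound), and contractivity (semisimplicity of imaginary eigenvalues and contractivity of the limit $P$) into one coherent description of $\spec(A_S)$.
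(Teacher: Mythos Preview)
Your proof is correct and follows essentially the same route as the paper: establish that $A_S$ has compact resolvent, invoke analyticity from Theorem~\ref{thm:via-forms} so that the semigroup is immediately compact and the spectral and growth bounds coincide, then reduce the asymptotics to the imaginary point spectrum and translate via Corollary~\ref{cor:splitting-eigenvalues}. The only cosmetic differences are that you obtain the continuous embedding $\dom(A_S)\hookrightarrow\dom(A_{21})$ by the explicit form estimate rather than the closed graph theorem, and you spell out the spectral-decomposition argument for operator-norm convergence instead of citing \cite[Corollary~V.3.2]{EngelNagel2000} directly; both are perfectly fine.
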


\begin{proof}
    It suffices to prove~\ref{thm:long-time-parabolic-compact:itm:projection}.
    The operator $A_{21}$ is closed according to Proposition~\ref{prop:skew-hermitian-equiv}.
    Endow the domain $\dom(A_S)$ with the graph norm of $A_S$. 
    As $A_S$ generates a $C_0$-semigroup, $A_S$ is closed, so $\dom(A_S)$ is a Banach space. 
    Moreover, $\dom(A_S)$ embeds continuously into $H_1$ is is contained in $\dom(A_S)$ according to Proposition~\ref{prop:skew-hermitian-equiv}. 
    Hence, $A_S$ embeds continuously into $\dom(A_{21})$ by the closed graph theorem. 
    As we assume the embedding $\dom(A_{21}) \hookrightarrow H_1$ to be compact, it follows that $\dom(A_S) \hookrightarrow H_1$ is also compact, so $A_S$ has compact resolvent.

    Moreover, the semigroup $(e^{tA_S})_{t \ge 0}$ is analytic according to Theorem~\ref{thm:via-forms} and hence it is immediately compact. 
    As the semigroup is also contractive, operator norm convergence of $e^{tA_S}$ as $t \to \infty$ is equivalent to assertion that $A_S$ does not have eigenvalues in $\ui \bbR \setminus \{0\}$, see e.g.\ \cite[Corollary~V.3.2 on p.\,330--331]{EngelNagel2000}. 
    The latter property is, in turn, equivalent to the property $\ker A_{21} \cap \ker(\ui \omega - A_{11}) = \{0\}$ for each $\omega \in \bbR \setminus \{0\}$, see Corollary~\ref{cor:splitting-eigenvalues}.

    Finally, assume that $e^{tA_S}$ converges with respect to the operator norm to an operator $P$ on $H_1$ as $t \to \infty$.
    Then $P$ is a projection onto $\ker A_S$, and the latter space is equal to $\ker A_{21} \cap \ker A_{11}$ according to Corollary~\ref{cor:splitting-eigenvalues}. 
    Since each operator $e^{tA_S}$ is contractive so is $P$, and hence the projection $P$ is orthogonal. 
\end{proof}

\section{Coupled wave-heat systems}
\label{sec:wave-heat}

In this section we demonstrate by a simple example how the abstract framework developed in Sections~\ref{section:setting} and~\ref{sec:inheritance-of-stability} can be used to study equations that show both parabolic and hyperbolic behaviour.
We study, on the interval $(0,2)$, the long-term behaviour following of the coupled wave-heat equation 
\begin{equation*}
    \begin{aligned}
        v_{tt} (\zeta,t) &= v_{\zeta\zeta} (\zeta,t) &\quad \text{for } \zeta \in (0,1), t \geq 0, \\
        w_t (\zeta,t) &= \frac{d}{d \zeta} S (\zeta) w_\zeta (\zeta,t) &\quad \text{for } \zeta \in (1,2), t \geq 0, \\
        v_t (0,t) &= 0 &\quad \text{for }  t \geq 0\\
        w (2,t) &= 0 &\quad \text{for } t \geq 0\\
        v_t (1,t) &=  w (1,t) &\quad \text{for } t \geq 0\\
        v_\zeta (1,t) &= S(1) w_\zeta (1,t) &\quad \text{for }  t \geq 0,
    \end{aligned}
\end{equation*}
where $S \in L^\infty(1,2)$ with $\re S \geq \nu$ almost everywhere for some number $\nu > 0$.
It is a well-known phenomena that taking wave-heat equations with appropriate boundary conditions and coupling them at parts of the boundary can lead to a system whose solutions converge strongly but not uniformly to $0$ as $t \to \infty$.
For instance, similar equations as the one above were studied in \cite{BattyPaunonenSeifert2016, Ng2020} for the case $S \equiv 1$. 
Under this assumption those references use concrete computations to estimate the resolvent of the differential operator and thus even obtain polynomial decay rates for the solutions, provided that the initial value is sufficiently smooth. 
A similar phenomenon was also shown to occur on a rectangle in two dimensions \cite{BattyPaunonenSeifert2019}.

We do not prove polynomial decay rates here. 
However, we will demonstrate how our abstract theory can be used to describe the differential operator in our wave-heat equation in an abstract way. 
From the general spectral results in Section~\ref{sec:inheritance-of-stability} we will then very easily obtain strong convergence as $t \to \infty$. 
The point here is that our approach hardly requires explicit computations, which makes it very convenient to handle the presence of the diffusion parameter $S$. 

We first consider the following wave equation on $(0,2)$
\begin{align*}
    u_{tt} (\zeta,t) &= u_{\zeta\zeta} (\zeta,t) &\quad \text{for } \zeta \in (0,2), t \geq 0, \\
    u_t (0,t) &= u_t (2,t) = 0 &\quad \text{for }  t \geq 0.
\end{align*}
By setting $x_1 \coloneqq u_t $ and $x_2 \coloneqq u_\zeta$ we can rewrite this system in port-Hamiltonian form as
\begin{align*}
    \begin{pmatrix}
        \dot x_1 \\ 
        \dot x_2
    \end{pmatrix}
    = 
    \begin{pmatrix}
        0 & \frac{d}{d \zeta} \\ 
        \frac{d}{d\zeta} & 0
    \end{pmatrix}
    \begin{pmatrix}
        x_1 \\ 
        x_2
    \end{pmatrix}    ,
\end{align*}
with state space $L^2(0,2)^2$. 
The domain of the block operator 
$
    \left(
        \begin{smallmatrix}
            0 & \frac{d}{d\zeta} \\ 
            \frac{d}{d\zeta} & 0
        \end{smallmatrix}
    \right)
$ 
is given by $H^1_0(0,2) \times H^1(0,2)$, encoding the boundary conditions of the system.

We restrict the state variables to $(0,1)$ and $(1,2)$ by setting $v_1 \coloneqq x_1 \big\vert_{(0,1)}$, $v_2 \coloneqq x_2\big\vert_{(0,1)}$, $w_1 \coloneqq x_1\big\vert_{(1,2)}$ and $w_2 \coloneqq x_2\big\vert_{(1,2)}$.
Then, the wave equation can be rewritten in these variables as
\begin{align*}
    \begin{pmatrix}
        \dot v_1 \\
        \dot v_2 \\
        \dot w_1 \\
        \dot w_2
    \end{pmatrix}
    =
    \underbrace{\begin{pmatrix}
        0 & \frac{d}{d\zeta} & 0 & 0 \\
        \frac{d}{d\zeta} & 0 & 0 & 0 \\
        0 & 0 & 0 & \frac{d}{d\zeta} \\
        0 & 0 & \frac{d}{d\zeta} & 0 
    \end{pmatrix}}_{=:A_\ext}
    \begin{pmatrix}
        v_1 \\
        v_2 \\
        w_1 \\
        w_2 
    \end{pmatrix}.
\end{align*}
Let  
\begin{align*}
  H_1 &:= L^2(0,1)^2 \times L^2(1,2)\\
  H_2 &:=  L^2(1,2) .   
\end{align*}
The operator $A_\ext: \dom(A_\ext)\subseteq H_1\times H_2 \rightarrow H_1\times H_2$ is equipped with the domain  
\begin{align*}
    \dom(A_\ext) 
    & :=
    \Biggl\{ 
        \begin{psmallmatrix}
            v_1 \\
            v_2 \\
            w_1 \\    
            w_2 \\
        \end{psmallmatrix} 
        \Bigg\vert \, 
        \exists x_1 \in H^1_0(0,2), \; x_2 \in H^1(0,2) \text{ such that } \\
        & \qquad \qquad 
        v_1 = x_1\big\vert_{(0,1)}, \; w_1 = x_1\big\vert_{(1,2)}, \;
        v_2 = x_2\big\vert_{(0,1)}, \; w_2 = x_2\big\vert_{(1,2)} 
    \Biggl\}
    \\
    & = 
    \Biggl\{ 
        \begin{psmallmatrix}
            v_1 \\
            v_2 \\
            w_1 \\    
            w_2 \\
        \end{psmallmatrix} 
        \Bigg\vert \, 
        v_1, v_2 \in H^1(0,1), \; w_1, w_2 \in H^1(1,2) \text{ and } \\ 
        & \qquad \qquad 
        v_1(0) = w_1(2) = 0, \; 
        v_1(1) = w_1(1), \; v_2(1) = w_2(1)
    \Biggl\}.
\end{align*}

Furthermore we define
$A_1:\dom(A_{1}) \subseteq H_1\times H_2  \rightarrow H_1$ by 
\begin{align*}
    A_1 \coloneqq 
    \begin{pmatrix}
        0 & \frac{d}{d\zeta} & 0 & 0 \\
        \frac{d}{d\zeta} & 0 & 0 & 0 \\
        0 & 0 & 0 & \frac{d}{d\zeta} \\
    \end{pmatrix}, \quad
    \dom(A_1) = \dom (A_\ext),
\end{align*}
and $A_{21}: \dom(A_{21}) \subseteq H_1 \to H_2$ by
\begin{align*}
    A_{21} &\coloneqq 
    \begin{pmatrix}
        0 & 0 & \frac{d}{d\zeta} \\
    \end{pmatrix}, \\
    \dom (A_{21}) 
    &= 
    L^2(0,1)^2 \times \{ w_1 \in L^2(1,2) \suchthatManual \exists x_1 \in H^1_0(0,2) \text{ s.t. } x_1\big\vert_{(1,2)} = w_1 \} 
    \\ 
    &= 
    L^2(0,1)^2 \times \{ w_1 \in H^1(1,2) \suchthatManual w_1(2) = 0 \}
    .
\end{align*}

From now on we identify the multiplication operator induced by $S$ with $S$. Hence, the operator $S: H_2 \to H_2$ is the bounded linear operator which is coercive from Section \ref{section:setting}. We obtain the operator $A_S$ by the construction in Definition \ref{def:operators} \ref{def:operators:itm:A_S} as
\begin{align*}
    A_S
    \begin{pmatrix}
        v_1 \\
        v_2 \\
        w_1 \\
    \end{pmatrix}
    =
    A_1
    \begin{pmatrix}
        v_1 \\
        v_2 \\
        w_1 \\
        S \frac{d}{d\zeta} w_1
    \end{pmatrix}
    =
    \begin{pmatrix}
        0 & \frac{d}{d\zeta} & 0 \\
        \frac{d}{d\zeta} & 0 & 0 \\
        0 & 0 & \frac{d}{d\zeta} S \frac{d}{d\zeta} \\
    \end{pmatrix}
    \begin{pmatrix}
        v_1 \\
        v_2 \\
        w_1 \\
    \end{pmatrix}
    .
\end{align*}
The domain of this operator is given by
\begin{align*}
    \dom(A_S) &= 
    \left \{ 
        \begin{pmatrix}
            v_1 \\
            v_2 \\
            w_1 \\
        \end{pmatrix} 
        \in \dom(A_{21})
        \suchthat
        \begin{pmatrix}
            v_1 \\
            v_2 \\
            w_1 \\
            S \frac{d}{d\zeta} w_1 
        \end{pmatrix}
        \in \dom(A_1)
    \right \} \\
    &=
    \Biggl\{
        \begin{pmatrix}
            v_1 \\
            v_2 \\
            w_1 \\
        \end{pmatrix}
        \Bigg| \,
        \exists x_1 \in H^1_0(0,2), x_2 \in H^1(0,2) \text{ such that } \\
        & \qquad \qquad  
        v_1 = x_1\big\vert_{(0,1)}, \; w_1 = x_1\big\vert_{(1,2)}, \; 
        v_2 = x_2\big\vert_{(0,1)}, \; S \frac{d}{d\zeta} w_1 = x_2\big\vert_{(1,2)} 
    \Biggr\}
    \\ 
    &= 
    \Biggl\{
        \begin{pmatrix}
            v_1 \\
            v_2 \\
            w_1 \\
        \end{pmatrix}
        \Bigg| \,
        v_1, w_1 \in H^1(0,1), \; w_1 \in H^1(1,2), \; S \frac{d}{d\zeta} w_1 \in H^1(1,2) \text{ and } \\
        & \qquad \qquad
        v_1(0) = w_1(2) = 0, \; 
        v_1(1) = w_1(1), \; 
        v_2(1) = \Big(S \frac{d}{d\zeta} w_1\Big)(1)
    \Biggr\}
    .
\end{align*}
From Theorem \ref{thm:inheritance-of-domination} and since the operator $A_{\ext}$ of the wave equation generates a contraction semigroup on $L^2(0,2)^2$, see \cite{JaSk21}, $A_S$ generates a contraction semigroup on $H_1$. 

\begin{theorem}
    The operator $A_S: \dom(A_S) \subseteq L^2(0,1)^2 \times L^2(1,2) \to  L^2(0,1)^2 \times L^2(1,2)$ generates a strongly stable semigroup.
\end{theorem}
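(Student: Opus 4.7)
The plan is to mimic the strategy of Proposition~\ref{prop:compact-resolvent-strongly-stable}: since $(e^{tA_S})_{t\ge 0}$ is a contraction semigroup, strong stability will follow once I show that $A_S$ has compact resolvent and no eigenvalues on $\ui\bbR$, whereupon the Arendt--Batty--Lyubich--Vũ theorem (equivalently, the Jacobs--de Leeuw--Glicksberg decomposition cited in that proposition) applies. Note that I cannot invoke Proposition~\ref{prop:compact-resolvent-strongly-stable} directly, because here $A_{21}$ fails to be injective: its kernel contains all triples $(v_1,v_2,0)$ with arbitrary $v_1,v_2 \in L^2(0,1)$. I will therefore obtain the absence of imaginary eigenvalues by a direct ODE computation built on Theorem~\ref{thm:eigenvalues}.

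For the compact resolvent, I would observe that the graph norm of $A_\ext$ is equivalent to the product $H^1$-norm on each component, so the inclusion $\dom(A_\ext) \hookrightarrow H_1 \times H_2 = L^2(0,1)^2 \times L^2(1,2)^2$ is compact by Rellich--Kondrachov on the bounded intervals. Hence $A_\ext$ has compact resolvent, and Proposition~\ref{prop: compact-resolvent} transfers this property to $A_S$. In particular $\spec(A_S)$ is discrete, so $\spec(A_S) \cap \ui\bbR$ coincides with $\specPnt(A_S) \cap \ui\bbR$.

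For the absence of imaginary eigenvalues, I would apply Theorem~\ref{thm:eigenvalues}: having $\ui\omega \in \specPnt(A_S)$ means there is a nonzero $(v_1,v_2,w_1,0) \in \dom(A_\ext)$ with $A_\ext(v_1,v_2,w_1,0) = \ui\omega (v_1,v_2,w_1,0)$. Reading off the four components yields $v_2' = \ui\omega v_1$ and $v_1' = \ui\omega v_2$ on $(0,1)$ together with $\ui\omega w_1 = 0$ and $w_1' = 0$ on $(1,2)$, while the membership in $\dom(A_\ext)$ forces $v_1(0) = 0$, $w_1(2) = 0$, $v_1(1) = w_1(1)$, and $v_2(1) = 0$ (the last because the fourth component is $0$). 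For $\omega = 0$ all functions are constant and the boundary conditions immediately force $v = 0$. For $\omega \ne 0$ the third equation yields $w_1 \equiv 0$, so the matching condition gives $v_1(1) = 0$; combining with $v_2 = v_1'/(\ui\omega)$ and $v_2(1) = 0$ also yields $v_1'(1) = 0$. Hence $v_1$ satisfies the overdetermined problem $v_1'' + \omega^2 v_1 = 0$ on $(0,1)$ with $v_1(0) = v_1(1) = v_1'(1) = 0$; since $\sin(\omega)$ and $\cos(\omega)$ cannot vanish simultaneously, this forces $v_1 \equiv 0$ and therefore $v_2 \equiv 0$.

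Putting both steps together, $\spec(A_S) \cap \ui\bbR = \emptyset$, and strong stability then follows exactly as at the end of the proof of Proposition~\ref{prop:compact-resolvent-strongly-stable}. The main obstacle is conceptual rather than technical: one has to recognize that Corollary~\ref{cor:non-eigenvalues} is unavailable (because $A_{21}$ ignores the $L^2(0,1)^2$ factor) and that Theorem~\ref{thm:eigenvalues} provides an equally usable replacement, reducing the eigenvalue analysis on the product space to a small overdetermined ODE on $(0,1)$ whose triviality is essentially immediate.
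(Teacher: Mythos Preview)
Your argument is correct and follows essentially the same route as the paper: compact resolvent of $A_S$ via Proposition~\ref{prop: compact-resolvent}, absence of imaginary eigenvalues via Theorem~\ref{thm:eigenvalues} together with the explicit ODE analysis on $(0,1)$, and then ABLV/JdLG. Your write-up is in fact slightly more detailed than the paper's, making explicit the overdetermined problem $v_1''+\omega^2 v_1=0$ with $v_1(0)=v_1(1)=v_1'(1)=0$ that the paper dispatches as ``a short computation''.
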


\begin{proof}
    From Proposition~\ref{prop: compact-resolvent} and since $H^1(0,2)$ is compactly embedded into $L^2(0,2)$, we know that the operator $A_S$ has compact resolvent.
    Hence, it suffices to show that $A_S$ has no eigenvalues in $ \ui\bbR$; the strong stability of the semigroup generated by $A_S$ then follows from \cite[Corollary~V.2.15(i)]{EngelNagel2000} or from the ABLV theorem \cite[Theorem~V.2.21]{EngelNagel2000}, as in the proof of Proposition \ref{prop:compact-resolvent-strongly-stable}.

    Let $\omega \in \bbR$, and assume $i\omega$ is an eigenvalue of $A_S$ with eigenvector
    $
        \begin{psmallmatrix}
            v_1 \\ v_2 \\ w_1
        \end{psmallmatrix}
    $.
    By Theorem \ref{thm:eigenvalues} this is equivalent to $\begin{psmallmatrix}
        v_1 \\ v_2 \\ w_1 \\ 0
    \end{psmallmatrix} \in \dom(A_\ext)$ and
    \begin{align*}
        i \omega 
        \begin{pmatrix}
            v_1 \\ v_2 \\ w_1 \\ 0
        \end{pmatrix}
        =
        A_\ext \begin{pmatrix}
            v_1 \\ v_2 \\ w_1 \\ 0
        \end{pmatrix}
        =
        \begin{pmatrix}
            \frac{d}{d\zeta} v_2 \\
            \frac{d}{d\zeta} v_1 \\
            0 \\
            \frac{d}{d\zeta} w_1
        \end{pmatrix}.
    \end{align*}
    Since 
    $
        \begin{psmallmatrix}
            v_1 \\ 
            v_2 \\ 
            w_1 \\
            0
        \end{psmallmatrix} 
        \in \dom(A_\ext)
    $ 
    one has $v_1,v_2 \in H^1(0,1)$ and $w_1 \in H^1(1,2)$, as well as the boundary conditions $v_1(0) = w_1(2) = 0$, $v_1(1) = w_1(1)$, and $v_2(1) = 0$.
   
    For $\omega = 0$ this implies that $v_1$, $v_2$, and $w_1$ are constant and hence equal to $0$.
    For $\omega \in \bbR \setminus \{0\}$ on the other hand, we obtain $w_1 = 0$ as well as that $i\omega$ is an eigenvalue of 
    $
        \begin{psmallmatrix}
            0 & \frac{d}{d\zeta} \\
            \frac{d}{d\zeta} & 0
        \end{psmallmatrix}
    $ 
    with eigenvector 
    $
        \begin{psmallmatrix}
            v_1 \\ 
            v_2
        \end{psmallmatrix}
    $
    that satisfies the boundary condition $v_1(0)=v_1(1) = 0$ and $v_2(1) = 0$.
    From this one can derive by a short computation that $v_1 = v_2 = 0$.
    Thus, in both cases we obtain 
    $\begin{psmallmatrix}
        v_1 \\
        v_2\\
        w_1 \\
        0
    \end{psmallmatrix} =0$, which contradicts $i\omega$ being an eigenvalue of $A_\ext$.
    Hence $A_S$ cannot have an imaginary eigenvalue either. 
\end{proof}

\section{Parabolic equations coupled by matrix-valued potentials}
\label{sec:coupled-parabolic-equations}

The fact that Corollary~\ref{cor:splitting-eigenvalues} prevents the occurrence of Turing instability (see the discussion after Example~\ref{exa:turing-instab}) can be used to show convergence to equilibrium as $t \to \infty$ for various classes of parabolic systems that are coupled by a matrix-valued potential.
In Subsection~\ref{subsec:matrix-coupling-general} we provide a general framework for the analysis of such coupled equations. 
In Subsection~\ref{subsec:matrix-coupling-heat} we show that this general framework incorporates a result about coupled heat equations from \cite[Thm.~3.7]{DoGl22} as a special case and in Subsection~\ref{subsec:matrix-coupling-biharmonic} we demonstrate how one can, in the same vein, study the long-term behaviour of a coupled system of biharmonic parabolic equations.

\subsection{General setting}
\label{subsec:matrix-coupling-general}

Throughout this subsection let $(\Omega,\mu)$ be a measure space, set $H_1 \coloneqq L^2(\Omega,\mu)$, let $H_2$ be a Hilbert space, let $A_{21}: \dom(A_{21}) \subseteq H_1 \to H_2$ be a closed and densely defined linear operator, and let $A_{12} \coloneqq -A_{21}^*: \dom\big(A_{21}^*\big) \subseteq H_2 \to H_1$ be its negative adjoint. 
We set 
$
    A_1 
    \coloneqq 
    \begin{pmatrix} 
        0 & A_{12}
    \end{pmatrix}
$
on the domain $H_1 \times \dom\big(A_{21}^*\big)$.
Then $A_1$ satisfies the splitting assumption~\ref{ass:splitting} (where $A_{11}$ is the zero operator on $H_1$) and the equivalent assertions of Proposition~\ref{prop:skew-hermitian-equiv} are satisfied. 

In contrast to Section~\ref{section:setting} we do not only consider a single operator $S$ on $H_1$ now. 
Instead, we fix an integer $N \ge 1$ and for each $k \in \{1, \dots, N\}$ consider a bounded linear operator $S_k: H_1 \to H_1$ that satisfies $\re \inner{S_k h_1}{h_1} \ge \nu \norm{h_1}_{H_1}^2$ for an ($k$-independent) number $\nu > 0$ and all $h_1 \in H_1$. 
Thus we get $N$ operators $A_{S_k}: \dom(A_{S_k}) \subseteq H_1 \to H_1$. 
According to Proposition~\ref{prop:skew-hermitian-equiv} each operator $A_{S_k}$ is given by 
\begin{align*}
    \dom(A_{S_k}) 
    & = 
    \big\{
        h_1 \in \dom(A_{21}) \suchthatManual S_k A_{21} h_1 \in \dom(A_{21}^*)
    \big\}
    \qquad \text{and}
    \\
    A_{S_k} h_1 
    & = 
    -A_{21}^* S_k A_{21} h_1 
\end{align*}
for all $h_1 \in \dom(A_{S_k})$.
Each operator $A_{S_k}$ generates a contractive and analytic $C_0$-semigroup on $H_1$ (Theorems~\ref{thm:inheritance-of-domination} and~\ref{thm:via-forms}).  
Moreover, Theorem~\ref{thm:via-forms} tells us that $-A_{S_k}$ is associated to the form $a_k: \dom(A_{21}) \times \dom(A_{21}) \to \bbC$ given by 
\begin{align*}
    a_k(u,v) = \inner{S_k A_{21}u}{A_{21}v}.
\end{align*}
for all $u,v \in \dom(A_{21})$.

We now couple the operators $A_{S_k}$ by adding a matrix-valued potential. 
To this end, let $V: \Omega \to \bbC^{N \times N}$ be measurable and (essentially) bounded. 
Then $V$ defines a bounded linear operator on $H_1^N = L^2(\Omega,\mu)^N \simeq L^2(\Omega,\mu;\bbC^N)$ via the mapping $u \mapsto Vu$; we denote this operator also by the symbol $V$.
The coupling of the operators $A_{S_k}$ via $V$ is modelled by the operator $C: \dom(A_{S_1}) \times \dots \times \dom(A_{S_N}) \subseteq H_1^N \to H_1^N$ that is given by
\begin{align}
    \label{eq:op-coupled-by-matrix-potential}
    C 
    \coloneqq 
    \begin{pmatrix}
        A_{S_1} &        &     \\ 
                & \hdots &     \\ 
                &        & A_{S_N}
    \end{pmatrix}
    + 
    V
\end{align}
The operator $C$ can equivalently be described by a sesqui-linear form or by the formalism from Section~\ref{section:setting}:

\begin{proposition}
    \label{prop:coupled-representations-of-A_S}
    Under the assumptions specified above the following holds: 
    \begin{enumerate}[label=\upshape(\alph*)]
        \item\label{prop:coupled-representations-of-A_S:itm:form} 
        The operator $-C$ is associated to the sesquilinear form $\big(a, \dom(A_{21})^N\big)$ on $H_1^N$ that is given by 
        \begin{align*}
            a(u,v) = a_1(u_1,v_1) + \dots + a_N(u_N,v_N)
            - \inner{Vu}{u}
        \end{align*}
        for all $u,v \in \dom(A_{21})^N$.

        \item\label{prop:coupled-representations-of-A_S:itm:A_ext} 
        The operator $C$ can be represented as an operator $A_S$ as defined in Section~\ref{section:setting}, where $S$ and $A_\ext$ are given by
        \begin{align*}
            S 
            = 
            \begin{pmatrix}
                S_1 &        &     \\ 
                    & \ddots &     \\ 
                    &        & S_N 
            \end{pmatrix}
        \end{align*}
        and
        \begin{align*}
            A_\ext 
            = 
            \left(
                \begin{array}{ccc|ccc}
                           &        &        & A_{12} &        &         \\
                           &   V    &        &        & \ddots &         \\
                           &        &        &        &        &  A_{12} \\
                    \hline 
                    A_{21} &        &        &        &        &         \\
                           & \ddots &        &        &        &         \\
                           &        & A_{21} &        &        &   
                \end{array}
            \right)
            ,
        \end{align*}
        with the natural domain for $A_\ext$.
    \end{enumerate}
\end{proposition}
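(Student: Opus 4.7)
The plan is to prove (b) by direct verification from Definition~\ref{def:operators}, and then to deduce (a) by applying Theorem~\ref{thm:via-forms} to each $-A_{S_k}$ individually and using elementary form calculus to assemble the direct sum and add the bounded perturbation $V$.

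\emph{Part (b).} I would work on the product spaces $\tilde H_1 \coloneqq H_1^N$ and $\tilde H_2 \coloneqq H_2^N$. The block matrix $S = \operatorname{diag}(S_1, \dots, S_N)$ is coercive on $\tilde H_2$ with the common constant $\nu$, which follows at once by summing the component-wise coercivity estimates. The proposed $A_\ext$ satisfies the splitting assumption~\ref{ass:splitting} with $\tilde A_{11} = V$, $\tilde A_{12} = \operatorname{diag}(A_{12}, \dots, A_{12})$ and $\tilde A_{21} = \operatorname{diag}(A_{21}, \dots, A_{21})$, where $\tilde A_{11} = V$ is everywhere defined and bounded. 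By Proposition~\ref{prop:splitting-repr}\ref{prop:splitting-repr:itm:A_S} the associated operator $A_S$ has domain $\{u \in \dom(\tilde A_{21}) \suchthatManual S \tilde A_{21} u \in \dom(\tilde A_{12})\}$ and acts as $A_S u = V u + \tilde A_{12} S \tilde A_{21} u$. The diagonal structure commutes with $S$, so the $k$-th component of the second summand is $A_{12} S_k A_{21} u_k = -A_{21}^* S_k A_{21} u_k = A_{S_k} u_k$, where the last equality is the formula from Proposition~\ref{prop:skew-hermitian-equiv}. Hence $A_S u = V u + \operatorname{diag}(A_{S_k})\, u = C u$, and the domain condition $S \tilde A_{21} u \in \dom(\tilde A_{12})$ unpacks component-wise to $S_k A_{21} u_k \in \dom(A_{21}^*)$, which is exactly the condition $u_k \in \dom(A_{S_k})$.

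\emph{Part (a).} By Theorem~\ref{thm:via-forms} (applied in the scalar case with $A_{11}=0$) the operator $-A_{S_k}$ is associated to the closed and elliptic form $(a_k, \dom(A_{21}))$ for each $k$. The orthogonal direct sum $\tilde a(u,v) \coloneqq \sum_{k=1}^N a_k(u_k, v_k)$ on $\dom(A_{21})^N \subseteq H_1^N$ is again closed and elliptic (the product graph norm turns $\dom(A_{21})^N$ into a Hilbert space that embeds continuously and densely into $H_1^N$, continuity is inherited factor-wise, and ellipticity holds with the same constants), and its associated operator is precisely $-\operatorname{diag}(A_{S_k})$. Since $V$ is a bounded operator on $H_1^N$, the perturbed form $a(u,v) = \tilde a(u,v) - \inner{Vu}{v}_{H_1^N}$ inherits closedness and ellipticity (after enlarging the ellipticity shift by $\norm{V}_\infty$), and the operator associated to $a$ differs from the one associated to $\tilde a$ by the bounded summand $-V$. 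Thus $-C = -\operatorname{diag}(A_{S_k}) - V$ is associated to $(a, \dom(A_{21})^N)$, as claimed -- noting that the displayed formula appears to contain a typographical slip and should read $\inner{Vu}{v}$ rather than $\inner{Vu}{u}$.

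The main pitfall to avoid is that Theorem~\ref{thm:via-forms} requires the ambient $A_\ext$ to generate a \emph{contractive} $C_0$-semigroup, which via Proposition~\ref{prop:skew-hermitian-equiv} forces the $A_{11}$ block to be dissipative. The potential $V$ in part (a) need not be dissipative, so I would deliberately \emph{not} apply Theorem~\ref{thm:via-forms} to the big block operator $\tilde A_\ext$ of part (b). Instead I would invoke Theorem~\ref{thm:via-forms} only at the level of each $-A_{S_k}$, where the relevant $A_{11}$ block is zero and all hypotheses are automatic, and then absorb $V$ purely on the form side. If one prefers a single application of Theorem~\ref{thm:via-forms} to $\tilde A_\ext$, a harmless shift $V \rightsquigarrow V - \lambda \id$ with $\lambda \geq \norm{V}$ restores dissipativity of the shifted $\tilde A_{11}$, and the shift is reversed at the end of the argument.
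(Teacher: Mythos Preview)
Your proposal is correct and is precisely the kind of direct verification the paper has in mind when it writes ``The proof of the proposition is straightforward, so we omit it.'' Your observation about the typo $\inner{Vu}{u}$ versus $\inner{Vu}{v}$ is accurate, and your care in part~(a) to avoid applying Theorem~\ref{thm:via-forms} globally (since $V$ is not assumed dissipative in the standing hypotheses of Subsection~\ref{subsec:matrix-coupling-general}) is well placed---the shift trick you mention at the end is the cleanest way to handle this if one prefers a single invocation.
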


The proof of the proposition is straighforward, so we omit it.
Part~\ref{prop:coupled-representations-of-A_S:itm:A_ext} of the proposition along with our abstract spectral and asymptotic results from Section~\ref{sec:inheritance-of-stability} now enable us to analyze the imaginary eigenvalues of the operator $C$ given in~\eqref{eq:op-coupled-by-matrix-potential} and the long-term behaviour of the semigroup $(e^{tC})_{t \ge 0}$.

\begin{theorem}
    \label{thm:long-term-coupled-by-matrix-potential-general}
    Let $w \in H_1 = L^2(\Omega,\mu)$ be a function that is non-zero almost everywhere and assume that the kernel $\ker A_{21}$ is spanned by $w$.
    Assume that the matrix $V(\theta) \in \bbC^{N \times N}$ is dissipative for almost all $\theta \in \Omega$.
    Then the following are equivalent for the coupled operator $C$ given in~\eqref{eq:op-coupled-by-matrix-potential}: 
    \begin{enumerate}[label=\upshape(\roman*)]
        \item\label{thm:long-term-coupled-by-matrix-potential-general:itm:ev-C} 
        The operator $C$ does not have any eigenvalues in $\ui \bbR \setminus \{0\}$.

        \item\label{thm:long-term-coupled-by-matrix-potential-general:itm:ev-V} 
        The eigenspaces of the matrices $V(\theta) \in \bbC^{N \times N}$ (where $\theta \in \Omega$) satisfy the following: 
        for every $\ui \beta \in \ui \bbR \setminus \set{0}$ and every measurable subset $\widetilde{\Omega} \subseteq \Omega$ with full measure one has
        \begin{equation*}
            \bigcap_{\theta \in \widetilde{\Omega}} \ker (\ui \beta - V(\theta)) = \set{0}.
        \end{equation*}
    \end{enumerate}
    If, in addition, the domain $\dom(A_{21})$, endowed with the graph norm of $A_{21}$, embeds compactly into $H_1$, then the above assertions~\ref{thm:long-term-coupled-by-matrix-potential-general:itm:ev-C} and~\ref{thm:long-term-coupled-by-matrix-potential-general:itm:ev-V} are also equivalent to:
    \begin{enumerate}[resume, label=\upshape(\roman*)]
        \item\label{thm:long-term-coupled-by-matrix-potential-general:itm:long-term} 
        The semigroup $(e^{tC})_{t \ge 0}$ converges with respect to the operator norm as $t \to \infty$.
    \end{enumerate}
\end{theorem}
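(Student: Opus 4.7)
The plan is to view the coupled operator $C$ as an instance of the framework of Section~\ref{section:setting} via Proposition~\ref{prop:coupled-representations-of-A_S}\ref{prop:coupled-representations-of-A_S:itm:A_ext}, and then invoke Corollary~\ref{cor:splitting-eigenvalues} for the spectral equivalence and Theorem~\ref{thm:long-time-parabolic-compact} for the long-term behaviour. First I would verify that the large block operator $A_\ext$ constructed in Proposition~\ref{prop:coupled-representations-of-A_S}\ref{prop:coupled-representations-of-A_S:itm:A_ext} falls under Proposition~\ref{prop:skew-hermitian-equiv}: the splitting assumption holds with $\widetilde{A}_{11} = V$ (the multiplication operator), $\widetilde{A}_{21} = \operatorname{diag}(A_{21},\dots,A_{21})$, and $\widetilde{A}_{12} = \operatorname{diag}(A_{12},\dots,A_{12})$; the bounded operator $V$ is dissipative on $H_1^N$ because each matrix $V(\theta)$ is dissipative; the operator $\widetilde{A}_{21}$ is closed and densely defined, and the identity $\widetilde{A}_{12} = -\widetilde{A}_{21}^*$ follows at once from $A_{12} = -A_{21}^*$. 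Consequently $A_\ext$ generates a contractive $C_0$-semigroup.

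Next I would apply Corollary~\ref{cor:splitting-eigenvalues} to obtain
\begin{equation*}
   \ker(\ui\omega - C)
   =
   \ker \widetilde{A}_{21} \cap \ker(\ui\omega - V)
\end{equation*}
for every $\omega \in \bbR$. The hypothesis that $\ker A_{21} = \linSpan\{w\}$ gives $\ker \widetilde{A}_{21} = \{(c_1 w, \dots, c_N w) : c \in \bbC^N\}$, and since $w(\theta) \ne 0$ for almost every $\theta$, a vector $u = (c_1w, \dots, c_Nw)$ lies in $\ker(\ui\omega - V)$ exactly when $V(\theta) c = \ui\omega c$ for almost every $\theta$. Thus $\ker(\ui\omega - C) \neq \{0\}$ is equivalent to the existence of a non-zero $c \in \bbC^N$ with $V(\cdot) c = \ui\omega c$ almost everywhere.

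The equivalence of this condition (for each $\omega = \beta \ne 0$) with the formulation in~\ref{thm:long-term-coupled-by-matrix-potential-general:itm:ev-V} is a short measure-theoretic observation: if some $c \ne 0$ satisfies $V(\cdot)c = \ui\beta c$ a.e., then taking $\widetilde{\Omega} \coloneqq \{\theta : V(\theta)c = \ui\beta c\}$ yields a full-measure set on which the intersection is non-trivial; conversely, any non-zero element of $\bigcap_{\theta \in \widetilde{\Omega}} \ker(\ui\beta - V(\theta))$ for a full-measure $\widetilde{\Omega}$ automatically satisfies the a.e.\ eigenvalue equation. This proves the equivalence of~\ref{thm:long-term-coupled-by-matrix-potential-general:itm:ev-C} and~\ref{thm:long-term-coupled-by-matrix-potential-general:itm:ev-V}.

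For the equivalence with~\ref{thm:long-term-coupled-by-matrix-potential-general:itm:long-term}, I would check the remaining hypothesis of Theorem~\ref{thm:long-time-parabolic-compact}: if $\dom(A_{21})$ with its graph norm embeds compactly into $H_1$, then $\dom(\widetilde{A}_{21}) = \dom(A_{21})^N$, equipped with the (equivalent) product graph norm, embeds compactly into $H_1^N$. Part~\ref{thm:long-time-parabolic-compact:itm:projection} of Theorem~\ref{thm:long-time-parabolic-compact} then characterizes operator norm convergence of $(e^{tC})_{t\ge 0}$ precisely by the absence of imaginary eigenvalues of $C$ in $\ui\bbR \setminus \{0\}$, which by the analysis above coincides with~\ref{thm:long-term-coupled-by-matrix-potential-general:itm:ev-V}. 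The main subtlety is only the bookkeeping around the quantifier over $\widetilde{\Omega}$ in~\ref{thm:long-term-coupled-by-matrix-potential-general:itm:ev-V}; all other steps are direct applications of the abstract results already proved.
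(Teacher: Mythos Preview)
Your proposal is correct and follows essentially the same approach as the paper: both use Corollary~\ref{cor:splitting-eigenvalues} to reduce \ref{thm:long-term-coupled-by-matrix-potential-general:itm:ev-C}~$\Leftrightarrow$~\ref{thm:long-term-coupled-by-matrix-potential-general:itm:ev-V} to analysing $\bbC^N w \cap \ker(\ui\beta - V)$, and for \ref{thm:long-term-coupled-by-matrix-potential-general:itm:long-term} the paper argues directly (compact resolvent plus analyticity via Theorem~\ref{thm:via-forms} gives an immediately compact semigroup) whereas you invoke the packaged version Theorem~\ref{thm:long-time-parabolic-compact}\ref{thm:long-time-parabolic-compact:itm:projection}, which is the same argument already proved.
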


\begin{proof}
    \equivalentProof{thm:long-term-coupled-by-matrix-potential-general:itm:ev-C}{thm:long-term-coupled-by-matrix-potential-general:itm:ev-V}
    Fix $\ui \beta \in \ui \bbR$. 
    Assertion~\ref{thm:long-term-coupled-by-matrix-potential-general:itm:ev-C} is, according to Corollary~\ref{cor:splitting-eigenvalues}, equivalent to the subspace 
    \begin{align*}
        \ker 
        \begin{pmatrix}
            A_{21} &        &        \\ 
                   & \ddots &        \\
                   &        & A_{21} 
        \end{pmatrix}
        \cap 
        \ker (\ui \beta - V)
    \end{align*}
    being zero. 
    Since the kernel of $A_{21}$ is, by assumption, spanned by $w$ the above subspace equals 
    \begin{align*}
        \bbC^N w
        \, \cap \, 
        \ker (\ui \beta - V)
        .
    \end{align*}
    As $w$ is, by assumption, non-zero almost everywhere 
    this space is zero if and only if $\bigcap_{\theta \in \widetilde{\Omega}} \ker (\ui \beta - V(\theta)) = \set{0}$ for every measurable subset $\tilde \Omega \subseteq \Omega$ of full measure. 

    \equivalentProof{thm:long-term-coupled-by-matrix-potential-general:itm:ev-V}{thm:long-term-coupled-by-matrix-potential-general:itm:long-term} 
    Assume now that $\dom(A_{21})$ with the graph norm embedes compactly into $H_1$. 
    Then the operator $C$ has compact resolvent. 
    As the semigroup generated by $C$ is analytic according to Theorem~\ref{thm:via-forms} it is, hence, immediately compact \cite[Theorem~II.4.29 on p.\,119]{EngelNagel2000}. 
    So the equivalence of~\ref{thm:long-term-coupled-by-matrix-potential-general:itm:ev-V} and~\ref{thm:long-term-coupled-by-matrix-potential-general:itm:long-term} follows from the boundedness of the semigroup and the general asymptotic theory of immediately compact semigroups \cite[Corollary~V.3.2 on pp.\,330--331]{EngelNagel2000}.
\end{proof}

\subsection{Coupled heat equations}
\label{subsec:matrix-coupling-heat}

In this section we study a system of heat equations on a bounded domain in $\bbR^d$, subject to Neumann boundary conditions, which is coupled by a matrix-valued potential. 
We show that a recent result from \cite{DoGl22} about the long-time behaviour of such equations is an easy special case of the general theory in the previous Subsection~\ref{subsec:matrix-coupling-general}.

Let $\emptyset \neq \Omega \subseteq \bbR^d$ be a bounded domain which has a \textit{Sobolev extension property}, that is, every Sobolev function in $H^1(\Omega;\bbC)$ is the restriction of a Sobolev function in $H^1(\bbR^d;\bbC)$. 
Furthermore let $N \geq 1$ and $S_1, \dots, S_N: \Omega \to \bbR^{n \times n}$ and $V: \Omega \to \bbC^{N \times N}$ be bounded and measurable functions.
We assume that there exists a constant $\nu > 0$ such that for all $k \in \left\{ 1, \dots, N \right\}$ and almost all $\theta \in \Omega$, the \textit{uniform ellipticity condition}
\begin{equation*}
    \re(\bar{\xi}^T S_k(\theta) \xi) \geq \nu \norm{\xi}_2
\end{equation*}
holds for all $\xi \in \bbC^d$.
Each of the matrix-valued functions $S_k$ can be identified with a multiplication operator on $L^2(\Omega;\bbC^d)$ which we also denote by $S_k$.
Our goal is to study the long-term behaviour of the heat solutions to the couple system of heat equations
\begin{equation}
    \label{eq:coupled_parabolic}
    \frac{\dx}{\dx t} \begin{pmatrix}
        x_1 \\
        \vdots \\ 
        x_N \\
    \end{pmatrix} 
    = 
    \begin{pmatrix}
        \div(S_1 \grad x_1) \\
        \vdots \\
        \div(S_N \grad x_N) \\
    \end{pmatrix} 
    + 
    V(\theta) 
    \begin{pmatrix}
        x_1 \\
        \vdots \\ 
        x_N \\
    \end{pmatrix},
\end{equation}
for $t \ge 0$ and subject to Neumann boundary conditions.
To give precise meaning to the expression ``$x \mapsto \div(S_k \grad x)$" one can use form methods and thus end up in the setting of Subsection~\ref{subsec:matrix-coupling-general}.
To do so we define, for each $k \in \left\{ 1, \dots, N \right\}$, the form
\begin{align*}
    a_k: H^1(\Omega;\bbC) \times H^1(\Omega; \bbC) &\to \bbC
\end{align*}
by 
\begin{align*}
    a_k(x,z) \coloneqq \inner{S_k\grad x}{\grad z}_{L^2(\Omega; \bbC^n)}
\end{align*}
for all $x,z \in H^1(\Omega;\bbC)$; 
we denote the associated operators on $L^2(\Omega)$ by $-A_{S_k} : \dom(A_{S_k}) \subseteq L^2(\Omega;\bbC) \to L^2(\Omega; \bbC)$. 
Those operators are commonly interpreted as a realization of the divergence form operators $x \mapsto - \div S_k \grad x$ with Neumann boundary conditions.

One can also represent the operators $A_{S_k}$ within the framework used in this paper:
as explained in Subsection~\ref{subsec:matrix-coupling-general} each operator $A_{S_k}$ is then given by 
\begin{align*}
    \dom(A_{S_k}) 
    & = 
    \big\{ 
        x \in H^1(\Omega;\bbC) 
        \suchthatManual
        S_k \dom(\grad^*) x
    \big\}, \\ 
    A_{S_k} x 
    & = 
    - \grad^* S_k \grad x
    ,
\end{align*}
where the gradient $\grad$ is considered as an operator $H^1(\Omega;\bbC) \subseteq L^2(\Omega;\bbC) \to L^2(\Omega;\bbC^d)$ and is the operator $A_{21}$ from Section~\ref{subsec:matrix-coupling-general}.  
If $\Omega$ has sufficiently smooth boundary one can check that the dual operator $\grad^*: \dom(\grad^*) \subseteq L^2(\Omega;\bbC^d) \to L^2(\Omega;\bbC)$ acts as $-\div$ and that its domain $\dom(\grad^*)$ consists of those vector fields in $L^2(\Omega; \bbC^d)$ whose distributional divergence is in $L^2(\Omega;\bbC)$ and that vanish, in a weak sense, in normal direction on $\partial \Omega$. 
This explains, again, that each $A_{S_k}$ is a realization of $x \mapsto - \div S_k \grad x$ with Neumann boundary conditions.

The coupled parabolic equation \eqref{eq:coupled_parabolic} can now be stated as the abstract Cauchy problem 
\begin{equation*}
    \frac{dx}{dt}= \underbrace{\begin{pmatrix}
        A_{S_1} & &  \\
        & \ddots & \\
         & & A_{S_N}
    \end{pmatrix}}_{\eqqcolon \calB_2}x + V x
\end{equation*}
on $L^2(\Omega; \bbC^N)$ for $t \ge 0$.
Let us characterize under which conditions all solutions to this equation converge as $t \to \infty$:

\begin{theorem}\label{thm:convergence_of_solutions}
    Let the matrix $V(\theta)$ be dissipative on $\bbC^N$ (endowed with the $\ell^2$-norm) for almost all $\theta \in \Omega$. Then the following assertions are equivalent:
    \begin{enumerate}[label=\upshape(\roman*)]
        \item 
        The solutions to the coupled heat equation \eqref{eq:coupled_parabolic} converge (uniformly on the unit ball) as $t \to \infty$.
        
        \item 
        For every $\ui \beta \in \ui \bbR \setminus \set{0}$ and every measurable subset $\widetilde{\Omega} \subseteq \Omega$ with full measure we have
        $
            \bigcap_{\theta \in \widetilde{\Omega}} \ker (\ui \beta - V(\theta)) = \set{0}.
        $
    \end{enumerate}
\end{theorem}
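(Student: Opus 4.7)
The plan is to recognize Theorem~\ref{thm:convergence_of_solutions} as a direct application of Theorem~\ref{thm:long-term-coupled-by-matrix-potential-general}. First, I would put the concrete PDE setup into the abstract framework of Subsection~\ref{subsec:matrix-coupling-general}: set $H_1 \coloneqq L^2(\Omega;\bbC)$, $H_2 \coloneqq L^2(\Omega;\bbC^d)$, $A_{21} \coloneqq \grad$ with $\dom(A_{21}) \coloneqq H^1(\Omega;\bbC)$ (so that $-A_{21}^*$ is the usual distributional $\div$ with the Neumann-type domain), and let each bounded coercive operator $S_k$ on $H_2$ be the multiplication operator induced by the matrix-valued function $S_k$. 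The uniform ellipticity assumption provides the common coercivity constant $\nu > 0$ required at the beginning of Subsection~\ref{subsection:the-operators}. Under these identifications, the operator $\calB_2 + V$ considered here coincides with the coupled operator $C$ defined in~\eqref{eq:op-coupled-by-matrix-potential}, so ``convergence of solutions uniformly on the unit ball'' is exactly operator norm convergence of the semigroup $(e^{tC})_{t \ge 0}$.

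Next, I would verify the remaining hypotheses of Theorem~\ref{thm:long-term-coupled-by-matrix-potential-general}. Since $\Omega$ is a bounded domain, hence connected, the kernel of $\grad$ on $H^1(\Omega;\bbC)$ consists precisely of the constant functions and is therefore spanned by $w \coloneqq \mathbbm{1}_\Omega$, which is non-zero almost everywhere. The graph norm of $\grad$ is equivalent to the usual $H^1$-norm, and by the Sobolev extension property of $\Omega$ together with the Rellich--Kondrachov theorem, $H^1(\Omega;\bbC)$ embeds compactly into $L^2(\Omega;\bbC)$. The dissipativity condition on the matrices $V(\theta)$ needed by Theorem~\ref{thm:long-term-coupled-by-matrix-potential-general} is imposed directly in the hypotheses here.

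With these verifications in hand, the equivalence \ref{thm:long-term-coupled-by-matrix-potential-general:itm:ev-C}$\Leftrightarrow$\ref{thm:long-term-coupled-by-matrix-potential-general:itm:long-term} of Theorem~\ref{thm:long-term-coupled-by-matrix-potential-general}, combined with \ref{thm:long-term-coupled-by-matrix-potential-general:itm:ev-C}$\Leftrightarrow$\ref{thm:long-term-coupled-by-matrix-potential-general:itm:ev-V}, immediately yields the equivalence between assertions~(i) and~(ii). There is no genuine obstacle in the argument; the substantive content has already been absorbed into Theorem~\ref{thm:long-term-coupled-by-matrix-potential-general}, and the main work here is cosmetic, namely matching the PDE notation with the abstract formalism and invoking the standard compactness result for Sobolev embeddings on domains with the extension property.
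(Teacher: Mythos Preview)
Your proposal is correct and follows essentially the same route as the paper's proof: both reduce the statement to Theorem~\ref{thm:long-term-coupled-by-matrix-potential-general} by identifying $A_{21}=\grad$ on $H^1(\Omega;\bbC)$, noting that $\ker\grad$ is spanned by the constant function $\one$ since $\Omega$ is connected, and invoking the compact embedding $H^1(\Omega)\hookrightarrow L^2(\Omega)$ via the extension property. Your write-up is slightly more explicit about the framework identifications and names Rellich--Kondrachov, but there is no substantive difference.
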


\begin{proof}
    This follows from Theorem~\ref{thm:long-term-coupled-by-matrix-potential-general} if we observe two things: 
    The kernel of $A_{21} = \grad: H^1(\Omega;\bbC) \to L^2(\Omega;\bbC^d)$ is spanned by the constant function $\one$ since $\Omega$ is a domain and thus connected. 
    And the embedding of $H^1(\Omega)$ (endowed with the $H^1(\Omega)$-norm, which equals the graph norm of the gradient operator) into $L^2(\Omega)$ is compact since $\Omega$ is assumed to have the extension property.
\end{proof}

Theorem~\ref{thm:convergence_of_solutions} can also be found in \cite[Thm.~3.7]{DoGl22}. 
A close look at the proof in this reference reveals that the key arguments are similar to the arguments that we used in the alternative proof of Corollary~\ref{cor:splitting-eigenvalues} and in the proof of Theorem~\ref{thm:long-term-coupled-by-matrix-potential-general}.
These arguments and the framework presented in Subsection~\ref{subsec:matrix-coupling-general} are an abstraction of the situation discussed in \cite[Thm.~3.7]{DoGl22} and, as we have just seen, immediately give this result as a special case. 
The advantage of our abstract approach is that it easily yields results on the long-term behaviour of other systems of differential equations that are coupled by a matrix-valued potential. 
We demonstrate this with one further class of equations in the next subsection.

\subsection{Coupled biharmonic heat equations}
\label{subsec:matrix-coupling-biharmonic}

As a second special case of coupled parabolic equations we consider the bi-harmonic equations with Neumann boundary conditions on a bounded domain in $\bbR^d$. 
More precisely, we make the following assumptions: 

Let $\Omega \subseteq \bbR^d$ be a bounded domain which we assume, as in  Subsection~\ref{subsec:matrix-coupling-heat}, to have the extension property. 
As before, fix an integer $N \ge 1$. 
For each $k \in \{1, \dots, N\}$ let $s_k: \Omega \to \bbC$ a measurable and bounded scalar-valued function and assume that $\re s_k(\theta) \ge \nu$ for a number $\nu > 0$, almost all $\theta \in \Omega$, and all $k \in \{1, \dots, N\}$. 
Let $\Delta: \dom(\Delta) \subseteq L^2(\Omega) \to L^2(\Omega)$ denote the Neumann Laplace operator. 
As $\Omega$ is not assumed to have smooth boundary, one can give meaning to the notion \emph{Neumann Laplace} by the same means as in Subsection~\ref{subsec:matrix-coupling-heat}. 

As in the previous two subsections, let $V: \Omega \to \bbC^{N \times N}$ be measurable and bounded. 
We want to describe the long-term behaviour of the solutions $x: [0,\infty) \to L^2(\Omega; \bbC^N) \simeq L^2(\Omega;\bbC)^N$ to the coupled biharmonic heat equation 
\begin{align}
    \label{eq:coupled_biharmonic}
    \begin{pmatrix}
        \dot x_1(t) \\ 
        \vdots \\ 
        \dot x_N(t)
    \end{pmatrix}
    = 
    \begin{pmatrix}
        \Delta \big(s_1 \Delta x_1(t)\big) \\ 
        \vdots \\ 
        \Delta \big(s_N \Delta x_N(t)\big)
    \end{pmatrix}
    + 
    V 
    \begin{pmatrix}
        x_1(t) \\ 
        \dots \\ 
        x_N(t)
    \end{pmatrix}
\end{align}
for $t \ge 0$.
As we assumed $\Delta$ to be the Neumann Laplace operator, this evolution equation is subject to the Neumann type boundary conditions
\begin{align*}
    \frac{\partial}{\partial \nu} x_k(t) 
    = 
    \frac{\partial}{\partial \nu} \big( s_k x_k(t) \big) = 0
\end{align*}
on $\partial \Omega$ for each $k \in \{1, \dots, N\}$, where $\frac{\partial}{\partial \nu}$ denotes the (outer) normal derivative.

To employ the machinery from Subsection~\ref{subsec:matrix-coupling-general}, we let $S_k: L^2(\Omega;\bbC) \to L^2(\Omega; \bbC)$ denote the multiplication with $s_k$ for each $k \in \{1, \dots, N\}$, we set $A_{12} \coloneqq - \Delta$ as well as $A_{21} \coloneqq \Delta$ and 
$
    A_1 
    \coloneqq 
    \begin{pmatrix} 
        0 & A_{12}
    \end{pmatrix}
    =
    \begin{pmatrix} 
        0 & \Delta
    \end{pmatrix}
$.
Then the general Theorem~\ref{thm:long-term-coupled-by-matrix-potential-general} gives the same characterization of convergence of the solutions to~\eqref{eq:coupled_biharmonic} that we also obtained for the coupled heat equation in Subsection~\ref{subsec:matrix-coupling-heat}. 
For the convenience of the reader and the sake of completeness we state the complete theorem.
\begin{theorem}\label{thm:convergence_of_solutions-2}
    Let the matrix $V(\theta)$ be dissipative on $\bbC^N$ (endowed with the $\ell^2$-norm) for almost all $\theta \in \Omega$. Then the following assertions are equivalent:
    \begin{enumerate}[label=\upshape(\roman*)]
        \item 
        The solutions to the coupled biharmonic heat equation \eqref{eq:coupled_biharmonic} converge (uniformly on the unit ball) as $t \to \infty$.
        
        \item 
        For every $\ui \beta \in \ui \bbR \setminus \set{0}$ and every measurable subset $\widetilde{\Omega} \subseteq \Omega$ with full measure we have
        $
            \bigcap_{\theta \in \widetilde{\Omega}} \ker (\ui \beta - V(\theta)) = \set{0}.
        $
    \end{enumerate}
\end{theorem}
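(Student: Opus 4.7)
The plan is to apply the general Theorem~\ref{thm:long-term-coupled-by-matrix-potential-general} directly in the setting that the paragraph preceding Theorem~\ref{thm:convergence_of_solutions-2} sets up: take $H_1 = H_2 = L^2(\Omega;\bbC)$, set $A_{21} = \Delta$ where $\Delta$ denotes the Neumann Laplacian on $L^2(\Omega)$, and note that its self-adjointness gives $A_{12} = -A_{21}^* = -\Delta$ as required by the framework of Subsection~\ref{subsec:matrix-coupling-general}. Each $S_k$ is the multiplication operator by $s_k$, and coercivity of $S_k$ with common constant $\nu$ follows at once from the assumption $\re s_k \ge \nu$ almost everywhere. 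The matrix potential $V$ is dissipative almost everywhere by hypothesis.

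To invoke Theorem~\ref{thm:long-term-coupled-by-matrix-potential-general} I would then need to verify two remaining conditions. The first is that $\ker A_{21}$ is spanned by a function which is non-zero almost everywhere; this is immediate from connectedness of $\Omega$, since the kernel of the Neumann Laplacian consists exactly of the constant functions, and so it is spanned by $w = \one$, which vanishes nowhere on $\Omega$.

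The only step requiring genuine work is the second one, namely the compact embedding of $\dom(\Delta)$, endowed with the graph norm, into $L^2(\Omega)$. My approach would be to show that this graph norm dominates the $H^1(\Omega)$-norm on $\dom(\Delta)$. Because the Neumann Laplacian is defined through the sesquilinear form $a(u,v) = \inner{\grad u}{\grad v}$ with form domain $H^1(\Omega)$, every $u \in \dom(\Delta)$ lies in $H^1(\Omega)$ and satisfies
\begin{equation*}
    \norm{\grad u}^2_{L^2(\Omega;\bbC^d)}
    =
    -\inner{\Delta u}{u}_{L^2(\Omega)}
    \le
    \tfrac{1}{2}\norm{\Delta u}^2_{L^2(\Omega)}
    +
    \tfrac{1}{2}\norm{u}^2_{L^2(\Omega)}
\end{equation*}
by Cauchy--Schwarz. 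Combining this with the trivial bound $\norm{u}_{L^2(\Omega)}^2 \le \norm{u}_{L^2(\Omega)}^2 + \norm{\Delta u}_{L^2(\Omega)}^2$ shows that the $H^1(\Omega)$-norm on $\dom(\Delta)$ is controlled by the graph norm of $\Delta$. Since $\Omega$ has the extension property, the Rellich--Kondrachov theorem yields that $H^1(\Omega) \hookrightarrow L^2(\Omega)$ is compact, and hence so is $\dom(\Delta) \hookrightarrow L^2(\Omega)$.

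With all hypotheses verified, the claimed equivalence follows immediately from Theorem~\ref{thm:long-term-coupled-by-matrix-potential-general}. I do not anticipate any real obstacle: the argument is structurally identical to the proof of Theorem~\ref{thm:convergence_of_solutions}, with the gradient replaced by $\Delta$ and the compact embedding secured through the form-theoretic domination estimate above instead of a direct appeal to the identification $\dom(\grad) = H^1(\Omega)$.
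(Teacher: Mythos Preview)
Your proposal is correct and follows the same overall strategy as the paper: apply Theorem~\ref{thm:long-term-coupled-by-matrix-potential-general} after checking that $\ker \Delta$ is spanned by $\one$ and that $\dom(\Delta)$ embeds compactly into $L^2(\Omega)$. The only point of divergence is the verification of the compact embedding. The paper argues via ultracontractivity: the extension property makes the Neumann heat semigroup ultracontractive, hence immediately compact, so $\Delta$ has compact resolvent. Your route is more direct and elementary --- the form identity $\norm{\grad u}^2 = -\inner{\Delta u}{u}$ together with Cauchy--Schwarz shows that the graph norm of $\Delta$ controls the $H^1$-norm, and then Rellich--Kondrachov finishes the job. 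Both arguments ultimately rest on the extension property; yours avoids the detour through semigroup regularity at the cost of using the specific form-theoretic structure of the Neumann Laplacian, while the paper's argument would transfer more readily to other operators whose semigroups are known to be immediately compact.
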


\begin{proof}
    We can apply Theorem~\ref{thm:long-term-coupled-by-matrix-potential-general} if we observe the following two things: 
    The kernel of the Neumann Laplace operator $\Delta = A_{21}$ is spanned by the constant function $\one$ since $\Omega$ is a domain and thus connected. 
    And the embedding of $\dom(A_{21})$, endowed with the graph norm, into $L^2(\Omega)$ is compact as $\Omega$ has the extension property; indeed, the extension property implies that the semigroup generated by $A_{21}$ is ultracontractive \cite[Subsections~7.3.2 and~7.3.2]{Ar04} and hence immediately compact, so $A_{21}$ has compact resolvent \cite[Theorem~II.4.29 on p.\,119]{EngelNagel2000}.
\end{proof}

\section{Stability of one-dimensional port-Hamiltonian systems with dissipation}
\label{sec:phs}

Port-based network modelling of complex physical systems leads to port-Hamil\-tonian systems. For 
finite-dimensional systems there is by now a well-established theory
\cite{vanDerSchaft06,EbMS07,DuinMacc09}. 
This has been extended to the infinite-dimensional situation by a functional analytic approach that was used to study 
well-posedness, see e.g.\ \cite{GoZwMa05, Sk21, ZwGoMaVi10}, stability and stabilization \cite{RaGoMaZw14,AuJa14,Au16} and controller design \cite{JaKa19,HuPa}. 
For an overview we refer, for instance, to the book \cite{JaZw12}
and the survey article \cite{JaZw19}.

A particular focus in the literature so far is laid on linear infinite-dimensional port-Hamiltonian systems on a one-dimensional spatial domain. 
These systems can be characterized in a geometric way by a so-called Dirac structure together with a Hamiltonian and
dissipation can then be modelled by adding a resistive relation; 
we refer for instance to \cite{Re21} for a detailed explanation of this approach. 
In particular, non isothermal reaction diffusion processes \cite{ZhHaGo15}, dissipative shallow water equations 
\cite{CaMaLe21} and coupled heat transfer \cite{JaEhGuJa22} can be modelled as infinite-dimensional port-Hamiltonian systems with dissipation.

In this section we consider evolution equations 
\begin{align}
    \label{eq:introduction-L}
    \dot z(t) = L z(t) 
    \qquad \text{for } t \geq 0
\end{align}
on the space $L^2\big( (0,1); \bbC^n \big)$, where $L$ is a second-order differential operator with port-Hamiltonian structure and is subject to a general set of boundary conditions; see Subsection~\ref{subsec:diff-eq} for details. 
We will use the abstract setting from Section~\ref{sec:inheritance-of-stability} to give sufficient conditions for exponential stability of such systems in Subsections~\ref{subsec:well-posedness-and-stability} and~\ref{subsec:heat-equation-nl}.
Hence, this section contributes to the stability theory of port-Hamiltonian systems. This is a very active topic as evidenced, in addition to the aforementioned references, for instance by the contributions \cite{RaZwGo17,ScZw18,Sc22,WaurickZwart2022}.

We note that for first order port-Hamiltonian systems -- i.e., where only spatial derivatives up to order $1$ occur and dissipation can only occur at the boundary -- exponential stability of the solutions is well-understood, and easily verifiable sufficient conditions are available, see for instance \cite[Chapter~9]{JaZw12}.

\subsection{The evolution equation}
\label{subsec:diff-eq}

In this section we apply the results of Section~\ref{sec:inheritance-of-stability} to a class of $\bbC^n$-valued one-dimensional port-Hamiltonian systems. 
We are looking for a function $z: [0,\infty) \to L^2\big((0,1);\bbC^n\big)$ which satisfies the evolution equation 
\begin{align}
    \label{eq:dissipative-phs}
    \dot z(t) 
    = &
    \Big(
        P_1 \partial + P_0 
        \; + \; 
        \big(G_1 \partial + G_0\big) \, S \, \big(G_1^* \partial - G_0^* \big) 
    \Big) 
    \,
    \calH \, z(t) , 
\end{align}
for all $t \ge 0$ -- where $\partial$ denotes the spatial derivative -- together with the initial condition 
\begin{align}
    \label{eq:inital-condition}
    z(0) & =  z_0
\end{align}
for an initial function $z_0 \in L^2\big((0,1); \bbC^n\big)$, 
and subject to the following boundary conditions for $x \coloneqq z(t)$ at every time $t \ge 0$:
\begin{align} 
    \label{eq:boundary-conditions}
    0 & = 
    \tilde W_\bdd
    \begin{pmatrix}
        \calH x\, (1)  \\
        S \big( G_1^* \partial - G_0^*\big) \calH \, x (1)  \\
        \calH x \, (0)  \\
        S \big( G_1^* \partial - G_0^*\big) \calH \, x (0)
    \end{pmatrix}
    .
\end{align}
As in the previous sections we will study mild solutions in the sense of $C_0$-semigroup theory.
To this end, we will formulate the equations~\eqref{eq:dissipative-phs}--\eqref{eq:boundary-conditions} in operator theoretic language as an abstract Cauchy problem at the end of Subsection~\ref{subsec:diff-eq}. 
The boundary conditions will be encoded in the domain of the corresponding differential operator, which will also make it clear how to interpret expressions such as $\calH x(1)$ that occur in~\eqref{eq:boundary-conditions}.

Note that, while the boundary conditions~\eqref{eq:boundary-conditions} might look slightly intimidating at first glance, they are a natural way to take into account both the values and the spatial derivatives of $x$ on the boundary in a way that reflects the structure of the differential operator in~\eqref{eq:dissipative-phs}.
Moreover, we will see later on that this formulation of the boundary conditions makes it very convenient to formulate results about well-posedness.

We make the following assumptions on the parameters in~\eqref{eq:dissipative-phs} and~\eqref{eq:boundary-conditions}:
\begin{enumerate}[label=(\arabic*)]
    \item\label{parameters:H} 
    $\calH \in L^{\infty}((0,1),\bbC^{n \times n})$ with $\calH(\zeta) ^{\ast} = \calH (\zeta) $ and $\calH (\zeta) \geq m I_n$ for a constant $m > 0$ and almost all $\zeta \in (0,1)$.%
    \footnote{
        Here, inequalities between self-adjoint matrices are meant in the sense of positive semi-definiteness.
    }

    \item\label{parameters:P} 
    $P_0, P_1 \in \bbC^{n \times n}$, where 
    $P_1$ is self-adjoint, and $P_0$ is skew-adjoint.

    \item\label{parameters:G} 
    $G_0, G_1 \in \bbC^{n \times r}$ for an integer $r \ge 1$.
	
    \item\label{parameters:S} 
    $S \in L^{\infty}((0,1); \bbC^{r \times r})$ and $\re S(\zeta) \coloneqq \frac{1}{2}\big(S(\zeta) + S(\zeta)^*\big) \ge \nu I_r$ for a constant $\nu > 0$ and almost all $\zeta > 0$.

    \item\label{parameters:V} 
    $\tilde W_\bdd \in \bbC^{(n+r) \times 2(n+r)}$ has full rank.
\end{enumerate}
Note that $P_0, P_1$ and $G_0,G_1$ do not depend on the spatial variable.
We endow the space $L^2\big((0,1);\bbC^n\big)$ with the inner product $\inner{\argument}{\argument}_{\calH}$ given by
\begin{equation*}
    \inner{f}{g}_{\calH} 
    \coloneqq 
    \frac{1}{2} \int_{0}^{1} g(\zeta)^{\ast} \calH(\zeta) f(\zeta) \dxInt \zeta
\end{equation*}
for all $f,g \in L^2\big((0,1);\bbC^n\big)$. 
Due to the assumptions on $\calH$ this inner product is equivalent to the usual inner product on $L^2\big((0,1);\bbC^n\big)$, but it is better adapted to our purposes.

Our main point of interest is that the second order derivative in~\eqref{eq:dissipative-phs} can cause internal damping.
If $S = 0$ (a case which we excluded in assumption~\ref{parameters:S}) and $P_1$ is invertible, then equation~\eqref{eq:dissipative-phs} defines a linear \emph{first order port-Hamiltonian system}.
These are well understood, see \cite{JaZw12,TrWa22,ViZwGoMa09}. 
In the next subsections we will demonstrate how one can derive properties of the second order differential equation~\eqref{eq:dissipative-phs} from the first order case. 

We formulate the evolution problem~\eqref{eq:dissipative-phs}--\eqref{eq:boundary-conditions} in operator theoretic language as the abstract Cauchy problem  
\begin{align*}
    \begin{cases}
        \dot z(t) = Lz(t) & \quad \text{for all } t \ge 0, \\
        z(0) = z_0, & 
    \end{cases}
\end{align*}
where the differential operator $L: \dom(L) \subseteq L^2\big((0,1); \bbC^n \big) \to L^2\big((0,1); \bbC^n \big)$ acts as 
\begin{align*}
    Lx 
    \coloneqq 
    \Big(
        P_1 \partial 
        \; + \; 
        P_0 
        \; + \; 
        \big(G_1 \partial + G_0\big) \, S \, \big(G_1^* \partial - G_0^* \big) 
    \Big) 
    \calH 
    \, x
\end{align*}
for all $x$ in 
\begin{align*}
    \dom(L) 
    \coloneqq 
    \Big\{
        x \in L^2\big((0,1); \bbC^n \big) 
        \suchthatManual 
        &
        \calH x \in H^1\big((0,1); \bbC^n\big) , \\ 
        &
        S \, \big(G_1^* \partial - G_0^* \big) \calH x \in H^1\big((0,1); \bbC^r \big) , \\
        & 
        x \text{ satisfies the boundary conditions~\eqref{eq:boundary-conditions}}
    \Big\}.
\end{align*}
The definition of $\dom(L)$ ensures that, if $z(t) \in \dom(L)$, then right hand side of~\eqref{eq:dissipative-phs} is in $L^2\big((0,1);\bbC^n\big)$ and the functions $\calH x(t)$ and $S \, \big(G_1^* \partial - G_0^* \big) \calH x(t)$ can be evaluated at the end points of the interval $(0,1)$ -- hence, the boundary conditions~\eqref{eq:boundary-conditions} make sense at the time $t$. 

In the next two subsections we discuss well-posedness and stability of the evolution equation in~\eqref{eq:dissipative-phs} and~\eqref{eq:boundary-conditions} -- 
i.e., we give conditions for $L$ to generate a contraction semigroup on $L^2\big((0,1); \bbC^n\big)$ with respect to the norm induced by the inner product $\inner{\argument}{\argument}_{\calH}$, and we study exponential stability of this semigroup.

\subsection{Representation via a closure relation}
\label{subsec:representation-via-cr}

We continue to use the notation from Subsection~\ref{subsec:diff-eq}.
We rewrite the differential operator $L$ in a form which fits the operator theoretic setting from Subsection~\ref{subsection:the-operators}: the goal is to represent $L$ as the operator $A_S$ (Proposition~\ref{prop:A_ext_and_A_S_phs}\ref{prop:A_ext_and_A_S_phs:itm:A_S}).
We follow essentially the approach from \cite[Chapter~6]{Vi07} where the generation property of $L$ has already been addressed. 
Our novel contribution is a result about stability,
Theorem~\ref{thm:stable-phs}.

We work on the Hilbert spaces 
\begin{align*}
    H_1 \coloneqq L^2\big((0,1); \bbC^n\big) 
    \qquad \text{and} \qquad 
    H_2 \coloneqq L^2\big((0,1); \bbC^r\big),
\end{align*}
where $H_1$ is endowed with the inner product $\inner{\argument}{\argument}_{\calH}$ and $H_2$ is endowed with the standard inner product. 
We denote elements of $H_1$ by $x$ and elements of $H_2$ by $x_p$.

\begin{definition} 
    \label{def:A_1_and_A_21_phs}
    On the spaces $H_1$ and $H_2$ we define operators $A_1$, $A_{21}$, and $S$ (as they appear in the abstract setting in Subsection~\ref{subsection:the-operators}) in the following way:
    \begin{enumerate}[label=(\alph*)]
        \item 
        Let $A_1: \dom(A_1) \subseteq H_1 \times H_2 \to H_1$ be given by 
        \begin{align*}
            \dom(A_1) \coloneqq &
            \bigg\{ 
                \begin{pmatrix}
                    x \\ x_p
                \end{pmatrix} 
                \in H_1 \times H_2 
                \suchthatManual 
                \begin{pmatrix}
                    \calH x \\ x_p
                \end{pmatrix} 
                \in H^1\big((0,1); \bbC^{n+r}\big) , \\ 
                & \qquad \qquad \qquad 
                \tilde W_\bdd 
                \begin{pmatrix}
                    \calH x(1) \\ 
                    x_p(1) \\ 
                    \calH x(0) \\ 
                    x_p(0)
                \end{pmatrix} 
                = 
                0
            \bigg\}
            \\ 
            A_1 
            \begin{pmatrix}
                x \\ x_p
            \end{pmatrix} 
            \coloneqq & 
            (P_1 \partial + P_0) \calH  x + (G_1 \partial + G_0) x_p
            .
        \end{align*}

        \item 
        Let $A_{21}: \dom(A_{21}) \subseteq H_1 \to H_2$ be given by 
        \begin{align*}
            \dom(A_{21}) 
            & \coloneqq 
            \left\{ x \in H_1 \suchthat \calH x \in H^1\big((0,1); \bbC^n\big) \right\}
            \\ 
            A_{21} x
            & = 
            (G_1^* \partial - G_0^*) \calH x
            .
        \end{align*}

        \item 
        The function $S \in L^\infty\big((0,1); \bbC^{r \times r}\big)$ from assumption~\ref{parameters:S} in Subsection~\ref{subsec:diff-eq} defines a bounded linear operator on $H_2$ by pointwise multiplication;  
        by abuse of notation, we also denote this operator by $S$. 
    \end{enumerate}
\end{definition}

Note that, due to the lower boundedness that was assumed in property~\ref{parameters:S} in Subsection~\ref{subsec:diff-eq}, the matrix multiplication operator $S$ on $H_2$ satisfies the coercivity assumption from Subsection~\ref{subsection:the-operators}. 
Next we compute $A_\ext$ and $A_S$.

\begin{proposition}
    \label{prop:A_ext_and_A_S_phs} 
    For the choices of $A_1$, $A_{21}$, and $S$ from Definition~\ref{def:A_1_and_A_21_phs}, 
    the operators $A_\ext$ and $A_S$ introduced in Definition~\ref{def:operators} take the following form:
    \begin{enumerate}[label=\upshape(\alph*)]
        \item\label{prop:A_ext_and_A_S_phs:itm:A_ext} 
        The operator $A_\ext: \dom(A_\ext) \subseteq H_1 \times H_2 \to H_1 \times H_2$ has the domain 
        \begin{align*}
            \dom(A_\ext) = \dom(A_1)
        \end{align*}
        and acts as
        \begin{align*}
            A_\ext = 
            \Biggl(
                \underbrace{
                    \begin{pmatrix}
                        P_1   & G_1 \\ 
                        G^*_1 & 0 
                    \end{pmatrix} 
                }_{\eqqcolon P_{1,\ext}}
                \partial 
                +
                \underbrace{
                    \begin{pmatrix}
                          P_0   & G_0 \\ 
                        - G^*_0 & 0
                    \end{pmatrix}
                }_{\eqqcolon P_{0, \ext}}
            \Biggl)
            \begin{pmatrix}
                \calH & 0 \\ 0 & \id_{H_2}
            \end{pmatrix}
            .
        \end{align*}

        \item\label{prop:A_ext_and_A_S_phs:itm:A_S} 
        The operator $A_S$ coincides with the differential operator $L$.
    \end{enumerate}
\end{proposition}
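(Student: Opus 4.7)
\medskip

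The proof is essentially a bookkeeping exercise: both assertions follow by unraveling the abstract definitions of $A_\ext$ and $A_S$ from Definition~\ref{def:operators} and matching them against the concrete $A_1$, $A_{21}$, and $S$ from Definition~\ref{def:A_1_and_A_21_phs} on one side, and against $L$ and its domain on the other.

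For part~\ref{prop:A_ext_and_A_S_phs:itm:A_ext}, I would first argue that $\dom(A_\ext) = \dom(A_1)$. By definition, $\dom(A_\ext)$ is the set of $(x,x_p)\in\dom(A_1)$ with the additional requirement $x\in\dom(A_{21})$; but $\dom(A_1)$ already forces $\calH x\in H^1((0,1);\bbC^n)$, which is exactly the condition defining $\dom(A_{21})$. Hence the extra requirement is automatic. Then I would simply carry out the $2\times 2$ block computation: applying $P_{1,\ext}\partial+P_{0,\ext}$ to $(\calH x, x_p)^\top$ yields, component-wise,
\begin{align*}
    \begin{pmatrix}
        P_1\partial(\calH x)+P_0\calH x+G_1\partial x_p+G_0 x_p \\
        G_1^*\partial(\calH x)-G_0^*\calH x
    \end{pmatrix}
    =
    \begin{pmatrix}
        A_1(x,x_p) \\ A_{21}x
    \end{pmatrix},
\end{align*}
which is the action of $A_\ext$ required by Definition~\ref{def:operators}\ref{def:operators:itm:A_ext}.

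For part~\ref{prop:A_ext_and_A_S_phs:itm:A_S}, I would unpack $\dom(A_S)$ as given by Definition~\ref{def:operators}\ref{def:operators:itm:A_S}: one needs $x\in\dom(A_{21})$, i.e.\ $\calH x\in H^1((0,1);\bbC^n)$, and $(x,SA_{21}x)^\top\in\dom(A_1)$. The latter condition breaks into (i) $SA_{21}x=S(G_1^*\partial-G_0^*)\calH x\in H^1((0,1);\bbC^r)$ and (ii) the algebraic boundary constraint $\tilde W_\bdd$ applied to $(\calH x(1),(SA_{21}x)(1),\calH x(0),(SA_{21}x)(0))^\top$ equals zero. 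These are exactly the three defining conditions of $\dom(L)$ in Subsection~\ref{subsec:diff-eq}, so $\dom(A_S)=\dom(L)$. For the action, substituting $(x,SA_{21}x)$ into the formula for $A_1$ yields
\begin{align*}
    A_S x = (P_1\partial+P_0)\calH x + (G_1\partial+G_0)\, S\,(G_1^*\partial-G_0^*)\calH x = Lx.
\end{align*}

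I do not expect any genuine obstacle here; the only mildly subtle point is to make sure that the identification $\dom(A_\ext)=\dom(A_1)$ really is automatic (no extra regularity is lost by the ``$h_1\in\dom(A_{21})$'' clause in Definition~\ref{def:operators}\ref{def:operators:itm:A_ext}), and that the boundary condition~\eqref{eq:boundary-conditions}, as written in Subsection~\ref{subsec:diff-eq}, matches verbatim the $\tilde W_\bdd$-constraint built into $\dom(A_1)$ when one substitutes $x_p=SA_{21}x$. Both checks are immediate from the definitions, so the proof can be written very briefly as ``straightforward computation from Definitions~\ref{def:operators} and~\ref{def:A_1_and_A_21_phs}.''
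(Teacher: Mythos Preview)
Your proposal is correct and matches the paper's approach exactly: the paper's proof is simply ``This follows by a straightforward computation from the definitions of the involved operators,'' and you have spelled out precisely that computation. Your explicit verification that the clause $h_1\in\dom(A_{21})$ is automatic (since $\dom(A_1)$ already forces $\calH x\in H^1$) and that the boundary condition matches under the substitution $x_p=SA_{21}x$ are the only points worth checking, and you have handled both correctly.
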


\begin{proof}
    This follows by a straightforward computation from the definitions of the involved operators.
\end{proof}

Note that assertion~\ref{prop:A_ext_and_A_S_phs:itm:A_S} in the proposition includes, in particular, that $A_S$ and $L$ have the same domain.

\subsection{Well-posedness and stability}
\label{subsec:well-posedness-and-stability}

Again, we use the notation from Subsections~\ref{subsec:diff-eq} and~\ref{subsec:representation-via-cr}.
Generation of a contraction semigroup by the operator $L$ has already been shown -- even under more general assumptions -- in \cite[Theorem~6.9]{Vi07}. 
For the convenience of the reader, we include both the result and its brief proof. 
If the self-adjoint matrix $P_{1,\ext} \in \bbC^{(n+r) \times (n+r)}$ introduced in Proposition~\ref{prop:A_ext_and_A_S_phs}\ref{prop:A_ext_and_A_S_phs:itm:A_ext} is invertible, we define
\begin{align*}
    W_\bdd 
    \coloneqq 
    \sqrt{2} \;
    \tilde W_B 
    \begin{pmatrix}
        P_{1,\ext} & -P_{1,\ext} \\ 
        I          &  I
    \end{pmatrix}%
    ^{-1} 
    \in 
    \bbC^{(n+r) \times 2(n+r)}
    .
\end{align*}
The invertibility of the $2(n+r) \times 2(n+r)$-matrix on the right follows from the invertibility of $P_{1,\ext}$, see e.g.\ \cite[Lemma~7.2.2]{JaZw12}. 
The factor $\sqrt{2}$ is not relevant for the formulation of the following results, 
but it plays a role in the interpretation of the matrix $W_\bdd$ by means of \emph{boundary flows} and \emph{efforts}, see \cite[Section~{7.2}]{JaZw12}.

\begin{proposition}
    \label{prop:well-posed-phs}
    Assume that the matrix $P_{1,\ext} \in \bbC^{(n+r) \times (n+r)}$ defined in Proposition~\ref{prop:A_ext_and_A_S_phs}\ref{prop:A_ext_and_A_S_phs:itm:A_ext} is invertible and that the $(n+r) \times (n+r)$-matrix 
    \begin{align*}
        W_\bdd  
        \begin{pmatrix}
            0      & I_{n+r} \\ 
            I_{n+r} & 0
        \end{pmatrix}
        W_\bdd^*
    \end{align*}
    is positive semi-definite. 
    Then $L$ generates a $C_0$-semigroup on $H_1 = L^2\big((0,1); \bbC^n\big)$ which is contractive with respect to the inner product $\inner{\argument}{\argument}_{\calH}$.
\end{proposition}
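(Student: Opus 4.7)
The plan is to deduce the result from Proposition~\ref{prop:A_ext_and_A_S_phs}\ref{prop:A_ext_and_A_S_phs:itm:A_S} together with Theorem~\ref{thm:inheritance-of-domination}. Since $L = A_S$, it suffices to show that the extended operator $A_\ext$ generates a contractive $C_0$-semigroup on the product Hilbert space $H_1 \times H_2$, where $H_1$ carries the inner product $\inner{\argument}{\argument}_\calH$ and $H_2$ carries (up to the customary factor $\tfrac12$ used on $H_1$) the standard $L^2$-inner product, so that $H_1 \times H_2$ is naturally identified with $L^2\big((0,1);\bbC^{n+r}\big)$ endowed with the Hamiltonian-weighted inner product associated to $\bigl(\begin{smallmatrix} \calH & 0 \\ 0 & I_r \end{smallmatrix}\bigr)$.

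The representation of $A_\ext$ from Proposition~\ref{prop:A_ext_and_A_S_phs}\ref{prop:A_ext_and_A_S_phs:itm:A_ext} exhibits $A_\ext$ as a classical first-order port-Hamiltonian operator of the form $(P_{1,\ext}\partial + P_{0,\ext})\widetilde \calH$ on $(0,1)$, with Hamiltonian density $\widetilde \calH = \bigl(\begin{smallmatrix} \calH & 0 \\ 0 & I_r \end{smallmatrix}\bigr)$ bounded and bounded from below, leading coefficient $P_{1,\ext}$ self-adjoint (this is immediate from the block structure), and zeroth-order coefficient $P_{0,\ext}$ skew-adjoint (which follows at once from $P_0^\ast = -P_0$ and the block $(G_0, -G_0^\ast)$-pattern). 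The boundary conditions encoded in $\dom(A_\ext) = \dom(A_1)$ are precisely those described by the full-rank matrix $\tilde W_\bdd$ applied to the trace vector $\big(\widetilde\calH x_\ext(1),\, \widetilde\calH x_\ext(0)\big)$. First I would verify these three structural points explicitly from Definition~\ref{def:A_1_and_A_21_phs} and Proposition~\ref{prop:A_ext_and_A_S_phs}\ref{prop:A_ext_and_A_S_phs:itm:A_ext}.

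Once this is done, $A_\ext$ fits exactly the framework of first-order port-Hamiltonian generators treated in \cite[Chapter~7]{JaZw12}. The standard generation theorem (see, e.g., \cite[Theorem~7.2.4]{JaZw12}) asserts that such an operator generates a contractive $C_0$-semigroup on the $\widetilde\calH$-weighted $L^2$-space if and only if the boundary matrix $W_\bdd$ --- obtained from $\tilde W_\bdd$ by the change of variables to boundary flows and efforts via the invertibility of $P_{1,\ext}$, with the factor $\sqrt 2$ chosen so that the construction is compatible with the factor $\tfrac12$ in the inner product --- has full rank and satisfies the inequality
\begin{equation*}
    W_\bdd
    \begin{pmatrix}
        0       & I_{n+r} \\
        I_{n+r} & 0
    \end{pmatrix}
    W_\bdd^\ast \ge 0.
\end{equation*}
The full-rank property is preserved by the invertible change of basis from $\tilde W_\bdd$ to $W_\bdd$ and therefore inherited from assumption~\ref{parameters:V}, and the sign condition is imposed by hypothesis.

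I expect the main obstacle not to be any single computation but rather the bookkeeping of the inner products and the trace identification: one must make sure that (i) the weighted inner product on $H_1 \times H_2$ really coincides with the Hamiltonian-weighted inner product associated to $\widetilde\calH$ that is used in the cited generation theorem, and (ii) the trace map $x_\ext \mapsto \big(\widetilde \calH x_\ext(1), \widetilde \calH x_\ext(0)\big)$ is well-defined on $\dom(A_\ext)$, so that the boundary condition encoded by $\tilde W_\bdd$ indeed corresponds to the one required by the theorem. Both points are routine but should be checked carefully; once they are in place, the proposition follows by combining the cited first-order generation theorem with Theorem~\ref{thm:inheritance-of-domination}.
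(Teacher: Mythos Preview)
Your proposal is correct and follows essentially the same approach as the paper: verify that $A_\ext$ is a first-order port-Hamiltonian operator to which \cite[Theorem~7.2.4]{JaZw12} applies, conclude that $A_\ext$ generates a contraction semigroup, and then invoke Theorem~\ref{thm:inheritance-of-domination} together with $L = A_S$. The paper's proof is terser and omits the bookkeeping you flag, but the logical skeleton is identical.
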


Note that this assumption that $P_{1,\ext}$ be invertible implies that $r \leq n$ (for if $r > n$, the columns of $G_1$ are linearly dependent, and hence the last $r$ columns of $P_{1, \ext}$ are linearly dependent as well).

\begin{proof}[Proof of Proposition~\ref{prop:well-posed-phs}]
    Invertibility of $P_{1,\ext}$ and the assumption on $W_\bdd$ imply that $A_\ext$ generates a contractive $C_0$-semigroup on $H_1 \times H_2$; 
    this follows from the theory of first order port-Hamiltonian systems \cite[Theorem 7.2.4]{JaZw12}. 
    Hence, the result from \cite[Theorem~2.2]{ZwGoMa16} that we quoted in Theorem~\ref{thm:inheritance-of-domination} implies that $A_S$ generates a contractive $C_0$-semigroup on $H_1$. 
    As $L = A_S$ according to Proposition~\ref{prop:A_ext_and_A_S_phs}\ref{prop:A_ext_and_A_S_phs:itm:A_S}, the claim follows.
\end{proof}

Finally, we give the sufficient condition for stability that we were aiming for.

\begin{theorem}
    \label{thm:stable-phs}
    Let the assumptions of Proposition~\ref{prop:well-posed-phs} be satisfied and assume in addition that the matrix $G_1 \in \bbC^{n \times r}$ has rank $n$. 
    If $(e^{tA_\ext})_{t \ge 0}$ is exponentially stable, then so is $(e^{t L})_{t \ge 0}$.
\end{theorem}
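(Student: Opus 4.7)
The plan is to identify $L$ with $A_S$ via Proposition~\ref{prop:A_ext_and_A_S_phs}\ref{prop:A_ext_and_A_S_phs:itm:A_S} and then invoke the compact embedding criterion in Theorem~\ref{thm:empty-peripheral-spec}\ref{thm:empty-peripheral-spec:itm:compact-embedding}. The two things to verify are that $\spec(A_\ext) \cap \ui \bbR = \emptyset$ and that the graph norm domain $\bigl(\dom(A_{21}), \norm{\argument}_{A_{21}}\bigr)$ embeds compactly into $H_1$.

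The first point is immediate: exponential stability of $(e^{t A_\ext})_{t \ge 0}$ on the Hilbert space $H_1 \times H_2$ forces the growth bound to be strictly negative, so the whole spectrum $\spec(A_\ext)$ sits strictly to the left of the imaginary axis. Hence $\spec(A_\ext) \cap \ui \bbR = \emptyset$, as required by Theorem~\ref{thm:empty-peripheral-spec}.

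For the compactness, I would first argue that the rank assumption actually forces $r = n$ and $G_1$ square and invertible: the invertibility of $P_{1,\ext}$ built into Proposition~\ref{prop:well-posed-phs} requires (as noted in the paper) $r \le n$, while $\rank G_1 = n$ requires $r \ge n$. Thus $G_1^* \in \bbC^{n \times n}$ is an invertible matrix. For $x \in \dom(A_{21})$ the function $\calH x$ lies in $H^1\bigl((0,1); \bbC^n\bigr)$ by definition, and
\begin{equation*}
    \partial (\calH x) = (G_1^*)^{-1} \bigl( A_{21} x + G_0^* \calH x \bigr).
\end{equation*}
Combining this with the equivalence of $\norm{x}_{H_1}$ and $\norm{\calH x}_{L^2}$ (coming from $m I_n \le \calH \le \norm{\calH}_\infty I_n$), one obtains constants $C_1, C_2 > 0$ such that
\begin{equation*}
    \norm{\calH x}_{H^1((0,1);\bbC^n)} \le C_1 \norm{x}_{A_{21}}
    \quad \text{and} \quad
    \norm{x}_{H_1} \le C_2 \norm{\calH x}_{L^2}.
\end{equation*}
Hence the inclusion $\bigl(\dom(A_{21}), \norm{\argument}_{A_{21}}\bigr) \hookrightarrow H_1$ factors as
\begin{equation*}
    \bigl(\dom(A_{21}), \norm{\argument}_{A_{21}}\bigr)
    \xrightarrow{\;x \,\mapsto\, \calH x\;}
    H^1\bigl((0,1); \bbC^n\bigr)
    \xrightarrow{\text{compact}}
    L^2\bigl((0,1); \bbC^n\bigr)
    \xrightarrow{\;\calH^{-1} \cdot\;}
    H_1 ,
\end{equation*}
where the middle arrow is compact by the Rellich--Kondrachov theorem in one dimension and the outer arrows are bounded. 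Hence the whole composition is compact.

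With the spectral gap on the imaginary axis and this compact embedding established, Theorem~\ref{thm:empty-peripheral-spec}\ref{thm:empty-peripheral-spec:itm:compact-embedding} (applied to $A_S = L$) gives exponential stability of $(e^{t L})_{t \ge 0}$. The main obstacle I expect is the compactness step, specifically checking that $\rank G_1 = n$ together with $r \le n$ really does let one recover $\partial(\calH x)$ from $A_{21} x$ with only $\calH x$ as lower-order correction; once that algebraic observation is in place, Rellich--Kondrachov finishes the job.
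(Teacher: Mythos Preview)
Your proposal is correct and follows essentially the same route as the paper: identify $L = A_S$, verify the compact embedding of $(\dom(A_{21}),\norm{\argument}_{A_{21}})$ into $H_1$ by factoring through the Rellich--Kondrachov embedding $H^1((0,1);\bbC^n)\hookrightarrow L^2((0,1);\bbC^n)$ via the maps $x\mapsto \calH x$ and $\calH^{-1}\cdot$, and then invoke Theorem~\ref{thm:empty-peripheral-spec}\ref{thm:empty-peripheral-spec:itm:compact-embedding}. Your argument is in fact slightly more explicit than the paper's, since you spell out both the spectral-gap step $\spec(A_\ext)\cap\ui\bbR=\emptyset$ and the formula $\partial(\calH x) = (G_1^*)^{-1}(A_{21}x + G_0^*\calH x)$ that makes $x\mapsto\calH x$ bounded into $H^1$.
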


Note that the assumptions in the theorem imply that $n = r$, and thus that $G_1$ is invertible. 
Indeed, as pointed out after Proposition~\ref{prop:well-posed-phs} the assumption that $P_{1, \ext}$ is invertible (which is needed for the generation property of $L$) implies that $r \leq n$. 
But the assumption in Theorem~\ref{thm:stable-phs} that the $n \times r$-matrix $G_1$ has rank $n$ implies that, conversely, $n \le r$.

\begin{proof}[Proof of Theorem~\ref{thm:stable-phs}]
    We will use Theorem~\ref{thm:empty-peripheral-spec}\ref{thm:empty-peripheral-spec:itm:compact-embedding}. 
    To this end we only need to prove that the domain 
    $\dom(A_{21}) = \left\{ x \in X \suchthat \calH x \in H^1\big((0,1); \bbC^n\big) \right\}$, 
    endowed with the graph norm $\norm{\argument}_{A_{21}}$, embeds compactly into $(L^2([0,1]; \bbC^n),\norm{\argument}_{\calH})$.
    Let us show that this follows from the rank condition on $G_1$.
    Consider the following commutative diagram: 
    \begin{center}
        \begin{tikzpicture}
            \matrix(m)[matrix of math nodes,row sep=3em, column sep=4em, minimum width=2em]
            {
            (\dom(A_{21}), \norm{\argument}_{A_{21}}) & (L^2((0,1);\bbC^n), \norm{\cdot}_{\calH}) \\
            (H^1((0,1); \bbC^n), \norm{\argument}_{H^1}) & (L^2((0,1);\bbC^n), \norm{\argument}_{L^2}) \\
            };
            \path[-stealth]
            (m-1-1) edge node [left] {$x \mapsto \calH x$} (m-2-1)
                    edge node [above] {$j$} (m-1-2)
            (m-2-2) edge node [right] {$x \mapsto \calH^{-1} x$} (m-1-2)
            (m-2-1) edge node [below] {$i$} (m-2-2);
        \end{tikzpicture}
    \end{center}
    Here, $i$ and $j$ denote the canonical embeddings, and $i$ is well-known to be compact \cite[II.5.1]{Ev98}. 
    Moreover, the mapping $x \mapsto \calH^{-1} x$ on the right is clearly continuous. 

    Since, as observed before the proof, the rank condition on $G_1$ implies that $r = n$ and that $G_1$ is invertible, the adjoint matrix $G_1^*$ is invertible, too. 
    From this it follows by a brief computation that the mapping $x \mapsto \calH x$ on the left is also continuous. 
    Hence, $j$ is compact as claimed.
\end{proof}

\subsection{The heat equation on $(0,1)$ with non-local boundary conditions}
\label{subsec:heat-equation-nl}

As a very simple but illuminating toy example of the results discussed in Section~\ref{sec:phs}, let us consider the one-dimensional and scalar-valued heat equation with general boundary conditions.
As before, we denote the spatial derivative by $\partial$.

\begin{example}
    \label{exa:heat-equation}
    Let $S \in L^\infty\big((0,1);\bbR\big)$ satisfy $S(\zeta) \ge \nu$ for a constant $\nu > 0$ and almost all $\zeta \in (0,1)$. 
    We consider the heat equation 
    \begin{align*}
        \dot z(t) = \partial \big(S \partial z(t)\big)
        \quad \text{for } t \ge 0
    \end{align*}
    with unknown function $z: [0,\infty) \to L^2\big( (0,1); \bbC \big)$ where, for each $t \ge 0$, the function $x \coloneqq z(t)$ is subject to the boundary conditions
    \begin{align}
        \label{eq:heat-equation-bc-explicit}
        \tilde W_\bdd 
        \begin{pmatrix}
            x(1) \\ 
            S \partial x \, (1) \\ 
            x(0) \\ 
            S \partial x \, (0)
        \end{pmatrix}
        = 
        0;
    \end{align}
    here, $\tilde W_\bdd \in \bbC^{2 \times 4}$ is a fixed matrix of rank~$2$. 

    This fits the framework described in Subsection~\ref{subsec:diff-eq}: choose $n=r=1$, $P_1=P_0=0$, $G_0=0$, as well as $G_1 = 1$, and let $\calH$ denote the constant function on $(0,1)$ with value $1$. 
    The operator $L$ then acts as $L = \partial S \partial$ on the domain 
    \begin{align*}
        \dom(L) 
        =
        \big\{ 
            x \in H^1\big((0,1);\bbC\big) 
            \suchthatManual 
            &
            S \partial x \in H^1\big((0,1);\bbC\big) 
            \text{ and } \\ 
            &
            x \text{ satisfies the boundary conditions~\eqref{eq:heat-equation-bc-explicit}}
        \big\}
        .
    \end{align*}
    From Proposition~\ref{prop:well-posed-phs} one can get a condition for $L$ to generate a contraction semigroup: 
    let the matrices $\tilde W_{\bdd,1}, \tilde W_{\bdd,0} \in \bbC^{2 \times 2}$ denote the parts of $\tilde W_\bdd$ that belong to the boundary points $1$ and $0$, respectively, i.e., $\tilde W_\bdd = \big(\tilde W_{\bdd,1} \;\, \tilde W_{\bdd,0}\big)$.
    By a straightforward computation it then it follows from Proposition~\ref{prop:well-posed-phs} that $L$ generates a contraction semigroup in case that the $2 \times 2$-matrix
    \begin{align}
        \label{eq:heat-equation-pos-sd}
        \tilde W_{\bdd,1}
        \begin{pmatrix}
            0 & 1 \\ 
            1 & 0
        \end{pmatrix}
        \tilde W_{\bdd,1}^*
        \; - \;
        \tilde W_{\bdd,0}
        \begin{pmatrix}
            0 & 1 \\ 
            1 & 0
        \end{pmatrix}
        \tilde W_{\bdd,0}^*
    \end{align}
    is positive semi-definite. 

    Let us also explicitly write down the operator $A_\ext$ for this choice of parameters. 
    Proposition~\ref{prop:A_ext_and_A_S_phs}\ref{prop:A_ext_and_A_S_phs:itm:A_ext} shows that $A_\ext: \dom(A_\ext) \subseteq L^2\big((0,1);\bbC^2\big) \to L^2\big((0,1);\bbC^2\big)$ acts as
    $
        A_\ext 
        =
        \begin{psmallmatrix}
            0        & \partial \\ 
            \partial & 0 
        \end{psmallmatrix}
    $
    on
    \begin{align*}
        \dom(A_\ext) 
        = 
        \bigg\{
            \begin{pmatrix}
                x \\ x_p
            \end{pmatrix}
            \in 
            H^1\big((0,1);\bbC^2\big)
            \suchthatManual
            \tilde W_\bdd 
            \begin{pmatrix}
                x(1) \\ x_p(1) \\ x(0) \\ x_p(0)
            \end{pmatrix}
            = 0
        \bigg\}
        .
    \end{align*}
    It is worthwhile to point out that, while $A_\ext$ might look like a block operator at first glance, the splitting assumption~\ref{ass:splitting} is not satisfied, in general, unless for very specific choices of the matrix $\tilde W_\bdd$. 
    This is because the boundary conditions~\eqref{eq:heat-equation-bc-explicit} prevent $\dom(A_1)$ -- which coincides with $\dom(A_\ext)$ in the present situation, see Proposition~\ref{prop:A_ext_and_A_S_phs}\ref{prop:A_ext_and_A_S_phs:itm:A_ext} -- from splitting as a direct sum with components in $H_1$ and $H_2$.
    
    A sufficient condition for the exponential stability of the semigroups generated by $A_\ext$ and $L$ is given in the subsequent proposition.
\end{example}

\begin{proposition}
    \label{prop:heat-equation-stability}
    In the situation of Example~\ref{exa:heat-equation}, 
    assume that the $2\times 2$-matrix in~\eqref{eq:heat-equation-pos-sd} is positive semi-definite.
    Assume one of the following three conditions is satisfied: 
    \begin{enumerate}[label=\upshape(\arabic*)]
        \item\label{prop:heat-equation-stability:itm:1} 
        There exists a number $c > 0$ such that all vectors $b \in \ker \tilde W_B$ satisfy 
        \begin{align*}
            \re \big(b_1^* b_2 - b_3^* b_4\big) \le -c \big(\modulus{b_1}^2 + \modulus{b_2}^2\big)
            .
        \end{align*}
        
        \item\label{prop:heat-equation-stability:itm:0}
        There exists a number $c > 0$ such that all vectors $b \in \ker \tilde W_B$ satisfy 
        \begin{align*}
            \re \big(b_1^* b_2 - b_3^* b_4\big) \le -c \big(\modulus{b_3}^2 + \modulus{b_4}^2 \big).
        \end{align*}

        \item\label{prop:heat-equation-stability:itm:pos-def}
        The matrix in~\eqref{eq:heat-equation-pos-sd} is positive definite.
    \end{enumerate}
    Then the semigroup $(e^{tA_\ext})_{t \geq 0}$ on $L^2\big((0,1);\bbC^2\big)$ and the heat semigroup $(e^{tL})_{t \geq 0}$ on $L^2\big((0,1);\bbC\big)$ are exponentially stable.
\end{proposition}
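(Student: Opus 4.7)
My plan is to first establish exponential stability of the auxiliary first-order semigroup $(e^{tA_\ext})_{t \ge 0}$ under each of the three conditions, and then to invoke Theorem~\ref{thm:stable-phs} to transfer this property to $(e^{tL})_{t \ge 0}$. All structural hypotheses of Theorem~\ref{thm:stable-phs} are automatic in the situation of Example~\ref{exa:heat-equation}: we have $n = r = 1$, $G_1 = 1$ is of full rank, $P_{1,\ext} = \begin{psmallmatrix} 0 & 1 \\ 1 & 0 \end{psmallmatrix}$ is invertible, and the positive semi-definiteness of the matrix in~\eqref{eq:heat-equation-pos-sd} together with Proposition~\ref{prop:well-posed-phs} guarantees that both $A_\ext$ and $L$ generate contraction semigroups.

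The starting point is an integration by parts rewriting the dissipation of $A_\ext$ entirely in boundary terms: for $h = (x, x_p) \in \dom(A_\ext)$ one obtains
\[
    \re \inner{A_\ext h}{h}_{H_1 \times H_2}
    =
    \re\big(\bar x(1) x_p(1) - \bar x(0) x_p(0)\big)
    =
    \re(b_1^* b_2 - b_3^* b_4),
\]
where $b \coloneqq (x(1), x_p(1), x(0), x_p(0))$ lies in $\ker \tilde W_\bdd$ by the boundary condition baked into $\dom(A_\ext)$. Condition~\ref{prop:heat-equation-stability:itm:1} then yields a uniform strict dissipation bound $\re \inner{A_\ext h}{h} \le -c(|x(1)|^2 + |x_p(1)|^2)$ at the right endpoint; condition~\ref{prop:heat-equation-stability:itm:0} yields the analogous estimate at the left endpoint; and condition~\ref{prop:heat-equation-stability:itm:pos-def} strengthens both of these into a strict two-sided boundary dissipation.

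The core task is to upgrade these boundary dissipation estimates into operator-norm exponential decay of $e^{tA_\ext}$. Since $A_\ext$ is precisely the first-order port-Hamiltonian generator associated with the invertible $P_{1,\ext}$ on $(0,1)$, this is a classical matter: in each of the three cases it follows from known exponential stability criteria for one-dimensional port-Hamiltonian systems with invertible $P_1$, e.g., \cite[Theorem~9.1.3]{JaZw12}. For added intuition I would diagonalise $P_{1,\ext}$ by the characteristic change of variables $u \coloneqq (x + x_p)/\sqrt{2}$ and $v \coloneqq (x - x_p)/\sqrt{2}$, which decouples $A_\ext$ into two unit-speed transport operators $\partial_t u = \partial_\zeta u$ and $\partial_t v = -\partial_\zeta v$ on $(0,1)$; the strict boundary dissipation supplied by~\ref{prop:heat-equation-stability:itm:1}, \ref{prop:heat-equation-stability:itm:0}, or~\ref{prop:heat-equation-stability:itm:pos-def} then forces every mode of energy to reach the absorbing boundary within a uniformly bounded time, yielding exponential decay by standard hyperbolic arguments. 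With $(e^{tA_\ext})_{t \ge 0}$ shown to be exponentially stable, Theorem~\ref{thm:stable-phs} delivers exponential stability of $(e^{tL})_{t \ge 0}$ without further work.

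I expect the main obstacle to be the passage from one-sided boundary dissipation to uniform exponential decay under conditions~\ref{prop:heat-equation-stability:itm:1} and~\ref{prop:heat-equation-stability:itm:0}: here one truly has to exploit the hyperbolic structure of $A_\ext$ and the finite propagation speed of the characteristics to shuttle energy from the non-dissipative endpoint to the dissipative one. Under condition~\ref{prop:heat-equation-stability:itm:pos-def} both endpoints absorb and the argument simplifies, but one still has to verify that positive definiteness of~\eqref{eq:heat-equation-pos-sd} is compatible with (indeed a strengthening of) the hypothesis of the first-order stability results being invoked.
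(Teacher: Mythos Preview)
Your proposal is correct and follows essentially the same route as the paper: compute the boundary dissipation formula $\re\langle A_\ext h,h\rangle = \re(b_1^*b_2 - b_3^*b_4)$ via integration by parts, invoke \cite[Theorem~9.1.3]{JaZw12} to obtain exponential stability of $(e^{tA_\ext})_{t\ge0}$ under conditions~\ref{prop:heat-equation-stability:itm:1} and~\ref{prop:heat-equation-stability:itm:0}, and then apply Theorem~\ref{thm:stable-phs}. The paper dispatches condition~\ref{prop:heat-equation-stability:itm:pos-def} simply by citing \cite[Lemma~9.1.4]{JaZw12}, which shows it implies both~\ref{prop:heat-equation-stability:itm:1} and~\ref{prop:heat-equation-stability:itm:0}; your characteristic-variable discussion is extra intuition rather than a different argument.
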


\begin{proof}
    \ref{prop:heat-equation-stability:itm:1}
    According to \cite[Theorem~9.1.3]{JaZw12} exponential stability of $(e^{tA_\ext})_{t \geq 0}$ holds if there exists a number $k>0$ such that 
    \begin{align}
        \label{eq:inner-products-exp-stability}
        \inner{
            A_\ext 
            \begin{psmallmatrix}
                x \\ x_p
            \end{psmallmatrix}
        }{
            \begin{psmallmatrix}
                x \\ x_p
            \end{psmallmatrix}
        }%
        _{L^2\big((0,1);\bbC^2\big)}
        \; + \; 
        \inner{
            \begin{psmallmatrix}
                x \\ x_p
            \end{psmallmatrix}
        }{
            A_\ext 
            \begin{psmallmatrix}
                x \\ x_p
            \end{psmallmatrix}
        }%
        _{L^2\big((0,1);\bbC^2\big)}
    \end{align}
    is dominated by  
    $
        -k
        \norm{
            \begin{psmallmatrix} 
                x(1) \\ x_p(1)
            \end{psmallmatrix}
        }_{\bbC^2}^2
    $
    for every function
    $
       \begin{psmallmatrix} 
            x \\ x_p
        \end{psmallmatrix}
        \in \dom(A_\ext)
    $.
    By using the explicit form of $A_\ext$ given in Example~\ref{exa:heat-equation} one can check by integration of parts that the expression~\eqref{eq:inner-products-exp-stability} is equal to 
    \begin{align*}
        2 \re 
        \Big( 
            x(1)^* x_p(1) - x^*(0) x_p(0)
        \Big)
        .
    \end{align*}
    By assumption this number is -- as $\big( x(1), x_p(1), x(0), x_p(0) \big)^{\operatorname{T}} \in \ker \tilde W_B$ -- indeed dominated by $-2c \big(\modulus{x(1)}^2 + \modulus{x_p(1)}^2\big)$.
    This proves the claim for $k \coloneqq 2c$.
    
    As $(e^{tA\ext})_{t \geq 0}$ is exponentially stable, it follows from Theorem~\ref{thm:stable-phs} -- which is applicable since $G_1 = 1$ has rank $1$ -- that $(e^{tL})_{t \geq 0}$ is exponentially stable, too.

    \ref{prop:heat-equation-stability:itm:0} 
    The proof is the same as for~\ref{prop:heat-equation-stability:itm:1}; one just has to use the second condition in \cite[Theorem~9.1.3]{JaZw12} instead of the first one.

    \ref{prop:heat-equation-stability:itm:pos-def} 
    This is actually a special case of~\ref{prop:heat-equation-stability:itm:1} and~\ref{prop:heat-equation-stability:itm:0}, see \cite[Lemma~9.1.4]{JaZw12}.
\end{proof}

\begin{remark}
    The conditions in Proposition~\ref{prop:heat-equation-stability} do not characterize exponential stability of $(e^{tL})_{t \geq 0}$.
    In fact, in the most classical case of Dirichlet boundary conditions, where exponential stability of the heat semigroup is well-known, 
    we have
    \begin{align*}
        \tilde W_\bdd 
        =
        \begin{pmatrix}
            1 & 0 & 0 & 0 \\ 
            0 & 0 & 1 & 0
        \end{pmatrix}
        .
    \end{align*}
    One can readily check that none of the conditions~\ref{prop:heat-equation-stability:itm:1}, \ref{prop:heat-equation-stability:itm:0}, and~\ref{prop:heat-equation-stability:itm:pos-def} is satisfied in this case.

    However, the interesting point about Proposition~\ref{prop:heat-equation-stability} is that it yields exponential stability of the heat semigroup $(e^{tL})_{t \geq 0}$ for many examples of more involved boundary conditions. 
    To name just one arbitrary but explicit example, consider the matrix
    \begin{align*}
        \tilde W_\bdd 
        =
        \begin{pmatrix}
            0 & 1 & 1 & -1 \\ 
            1 & 1 & 0 &  0
        \end{pmatrix}
        ,
    \end{align*}
    which corresponds to the boundary conditions
    \begin{align*}
        \partial x(1) + x(0) + \partial x(0) & = 0, \\ 
        x(1) + \partial x(1)                 & = 0.
    \end{align*}
    A brief computation shows that, in this case, 
    the matrix in~\eqref{eq:heat-equation-pos-sd} is 
    $
        \begin{psmallmatrix}
            2 & 1 \\ 
            1 & 2
        \end{psmallmatrix}
        ,
    $
    which is positive definite. 
    Hence Proposition~\ref{prop:heat-equation-stability}\ref{prop:heat-equation-stability:itm:pos-def} implies that the heat semigroup with those boundary conditions is exponentially stable.
\end{remark}

We close the article with a comparison of Example~\ref{exa:heat-equation} to the approach via form methods, and with a brief outlook on potential further developments.

\begin{remark}
    It is illuminating to consider the special case of Robin boundary conditions and to compare the approach from Example~\ref{exa:heat-equation} to the approach via form methods that is possible in this case.
    Fix a matrix $C \in \bbC^{2 \times 2}$ and, again, a function $S \in L^\infty\big((0,1);\bbC\big)$. 
    We consider the second order differential operator $L = \partial S \partial$ with domain 
    \begin{align*}
        \dom(L) 
        \coloneqq
        \Big\{ 
            x \in H^1\big((0,1);\bbC\big) 
            \suchthatManual 
            &
            S \partial x \in H^1\big((0,1);\bbC\big) 
            \text{ and} 
            \\ 
            &
            \begin{pmatrix}
                \phantom{-} \partial S x(1) \\ 
                         -  \partial S x(0)
            \end{pmatrix}
            = 
            - C 
            \begin{pmatrix}
                x(1) \\ 
                x(0)
            \end{pmatrix}
        \Big\}
        .
    \end{align*}
    So the boundary conditions say that the exterior co-normal derivative of $x$ with respect to $S$ is equal to $-C$ times the boundary values of $x$. 
    These conditions are called \emph{Robin boundary conditions}. 
    If $C$ is a diagonal matrix, we speak of \emph{local} and otherwise of \emph{non-local} Robin boundary conditions. 
    
    We briefly discuss two approaches to study the operator $L$:
    \begin{enumerate}[label=(\alph*)]
        \item 
        One can consider $L$ as a special case of the operator in Example~\ref{exa:heat-equation} by setting 
        \begin{align*}
            \tilde W_\bdd 
            \coloneqq 
            \begin{pmatrix}
                c_{12} & 0 & c_{11} & -1 \\ 
                c_{22} & 1 & c_{21} &  0
            \end{pmatrix}
            ,
        \end{align*}
        where $c_{ij}$ are the components of $C$.
        Then a brief computation shows that the matrix in~\eqref{eq:heat-equation-pos-sd} is equal to $C + C^*$. 
        Hence, if $C + C^*$ is positive semi-definite, then $L$ generates a contractive $C_0$-semigroup.

        \item 
        One can also construct $L$ via the theory of sesquilinear forms that we briefly outline at the beginning of Subsection~\ref{subsec:forms}.
        Indeed, it is not difficult to check that $-L$ is associated to the form $a: H^1\big((0,1); \bbC\big) \times H^1\big((0,1); \bbC\big) \to \bbC$ on $L^2\big((0,1);\bbC\big)$ that is given by
        \begin{align*}
            a(w,x) 
            \coloneqq 
            \int_0^1 \partial w(\zeta)^* \; S \; \partial x(\zeta) \dxInt \zeta 
            + 
            \inner{
                \begin{psmallmatrix}
                    w(0) \\ 
                    w(1)
                \end{psmallmatrix}%
            }{
                C
                \begin{psmallmatrix}
                    x(0) \\ 
                    x(1)
                \end{psmallmatrix}
            }_{\bbC^2}
        \end{align*}
        for all $w,x \in H^1\big((0,1); \bbC\big)$.
        If one endows $H^1\big((0,1); \bbC\big)$ with it usual norm, then $a$ can easily be checked to be a closed form in the sense specified in Subsection~\ref{subsec:forms}.
        Moreover it is not difficult to check that the operator associated to $a$ is $-L$. 
        Hence, $L$ generates an analytic $C_0$-semigroup.
        If $C+C^*$ is positive semi-definite, than the numerical range of the form is contained in the closed right half plane in $\bbC$ and hence the semigroup is contractive in this case. 
    \end{enumerate}
    So in the particular case of Robin boundary conditions we can use both approaches; they give the same sufficient condition of contractivity of the semigroup generated by $L$. 
    The form method, however, also directly yields analyticity. 

    Interestingly, the construction of the Robin Laplace operator via form methods does not fit the framework of our Theorem~\ref{thm:via-forms} 
    as we only consider bounded operators $A_{11}$ in this theorem.
\end{remark}

We note that especially the case of local Robin boundary conditions has been studied in great generality on multi-dimensional domains; see for instance \cite{Da00}. 

Non-local Robin boundary conditions occur, for instance, in a physical model in \cite[Section~3]{GuMe97}.
At a theoretical level, they provide in interesting source of examples for so-called \emph{eventually positive} behaviour of $C_0$-semigroups, see e.g.\  \cite[pp.\,2625--2630]{DKG16b}. 
As eventual positivity of a semigroup is closely tied to properties of the eigenvalues with maximal real part \cite[Theorem~4.4 and Corollary~3.3]{DKG16b}, it is an interesting question whether the characterization of imaginary eigenvalues of $A_S$ in Theorem~\ref{thm:eigenvalues} can be used to obtain further examples of non-local boundary conditions -- beyond the Robin case -- for which the heat equation shows eventually positive behaviour.

\subsection*{Conflict of interest statement}

The authors declare that they do not have any conflict of interest.

\subsection*{Data availability statement}

The manuscript does not use or generate any data sets.

\bibliographystyle{abbrv}
\bibliography{literature}

\end{document}